\numberwithin{equation}{section}
\newtheorem{theorem}{Theorem}[section]
\newtheorem{lemma}[theorem]{Lemma}
\newtheorem{proposition}[theorem]{Proposition}
\theoremstyle{definition}
\newtheorem{definition}[theorem]{Definition}
\newtheorem{remark}{Remark}
\def\N{\mathbb{N}}
\def\R{\mathbb{R}}
\let\ol=\overline
\def\O{\Omega}
\newcommand{\su}[2]{\genfrac{}{}{0pt}{}{#1}{#2}}
\def\thm#1{Theorem \ref{thm:#1}}
\newenvironment{formula}[1]{\begin{equation}\label{eq:#1}}
                       {\end{equation}\noindent}
\def\Fi#1{\begin{formula}{#1}}
\def\Ff{\end{formula}\noindent}
\begin{document}
\title{
{Persistence versus extinction under a climate change  in mixed environments}
}

\author{Hoang-Hung Vo \thanks{\textrm{Centre d'Analyse et de Math\'ematique Sociales, 190-198, Avenue de France 75244 Paris Cedex 13, France. Email:
}\ttfamily vhhungkhtn@gmail.com}}

\maketitle

\begin{abstract}
This paper is devoted to the study of the persistence versus extinction of species in the reaction-diffusion equation:

\begin{equation}
u_t-\Delta u=f({t},x_1-{c}t,y,u) \quad\quad \textrm{$t>0$, $x\in\O, $}\nonumber
\end{equation}
where $\O$ is of cylindrical type or partially periodic domain, $f$ is of Fisher-KPP type  and the scalar ${c}>0$ is a given forced speed. This type of equation originally comes from a model in population dynamics (see \cite{BDNZ},\cite{PL},\cite{SK}) to study the impact of climate change on the persistence versus extinction of species. From these works, we know that the dynamics is  governed by the traveling fronts $u(t,x_1,y)=U(x_1-ct,y)$, thus  characterizing the set of  traveling fronts plays a major role. In this paper, we first consider a more general model than the model of \cite{BDNZ} in higher dimensional space, where the environment is only assumed to be globally unfavorable  with  favorable pockets extending to infinity. We consider in two frameworks: the reaction term is time-independent or time-periodic dependent. For the latter, we  study the concentration of the species when the environment outside $\Omega$ becomes extremely unfavorable and further prove a symmetry breaking property of the fronts.

\end{abstract} 

\textbf{Mathematical Subject Classification (2010):} 35C07, 35J15, 35B09, 35P20,  	92D25.

\textbf{Key words:} KPP equations, traveling wave solutions, eigenvalue problems, unfavorable, compactness argument, concentration,  cylindrical domains.

\medskip


\tableofcontents


\section{Introduction and main results}

\subsection{Introduction and definitions}
In a pioneering paper \cite{BDNZ}, Berestycki et al. studied the influence of climate change (global warming) on the population dynamics of biological species, who are strongly sensitive to temperature conditions. The authors proposed a mathematical model in $\R$, which is formulated as a reaction-diffusion equation with a forced speed $c$ : 
\begin{equation}
u_t-u_{xx}=f(x-ct,u)\quad\quad x\in\R,\label{0.00}
\end{equation}
where $u$ denotes population density of species and $c$ is the speed of the climate change. A typical $f$ considered in \cite{BDNZ} is 
\begin{equation}
f(x,s)=\left\{\begin{array}{ll}
-sm&\textrm{for $x<0$ and $x>L$}\\
sm'\left( 1-\dfrac{s}{K}\right)&\textrm{for $0\leq x\leq L$},\label{0.00new}
\end{array} \right.
\end{equation}
for some positive constants $m,m',L,K$. This nonlinearity expresses that the  environment  is  unfavorable outside a compact set $[0,L]$ and favorable inside. The higher dimensional versions with more general type of $f$ were  studied later in \cite{BR1}, \cite{BR2}. Beside that a similar model was also considered  in the context of competing species by Potapov and Lewis \cite{PL}, where the authors investigated the co-existence of two species under the effect of climate change and moving range boundaries on habitat invasibility. All these papers assume that the environments are  completely  unfavorable near infinity, i.e the favorable zone has compact support. More precisely, there exist $R, m>0$ such that
\begin{equation}
f_s(x,0)\leq-m,\quad\quad\forall|x|\geq R.\label{V10.1}
\end{equation}
Note that $f_s(x,0)$ is understood as the initial per capita rate of growth.

The main purpose of this paper is to study the criterion for persistence and extinction of species in  more general frameworks  than the ones considered in these previous works \cite{BDNZ},\cite{BR1},\cite{BR2},\cite{PL} and further provide some applications of this theory. Our aim is to deal with the new cases for which  condition (\ref{V10.1}) is  no longer true. We extend the model of (\ref{0.00}) in two frameworks. The first one is  for an infinite cylindrical domain with Neumann boundary condition:
\begin{equation}
\left\{\begin{array}{ll}
u_t-\Delta u=f(x_1-{c}t,y,u) & \textrm{$t>0$, $x\in\O $}\\
\partial_\nu u(t,x_1,y)=0 & \textrm{$t>0$, $x\in \partial\O$},\label{0}
\end{array} \right.
\end{equation} 
where $\Omega=\R\times\omega$, $\omega$ is an open bounded and smooth domain in $\R^{N-1}$, $\nu$ denotes the exterior unit normal vector field to $\O$. In this framework, the environments are assumed to be independent of time. We are especially interested in considering  environments of mixed type, which are only assumed to be globally unfavorable at infinity.

One can think of the environment containing both favorable and unfavorable regions that extend all the way to infinity, namely $f_s(x_1,y,0)>0$ and $f_s(x_1,y,0)<0$ respectively as $x_1\to\pm\infty$, depending on the location of $y$. The competitive and mutual influence  between these regions play a major role in characterizing the persistence and extinction of the species in the whole domain. 

Mathematically, we will use a global condition in terms of spectral property  to describe that the environment is globally unfavorable at infinity. The more detailed explanations of this condition will be given in  subsection 1.2.1.

In the second framework, we investigate another type of mixed environment with periodic dependence on $y$ and $t$. More precisely, the equation is now of following type
\begin{equation}
u_t-\Delta u=f({t},x_1-{c}t,y,u) \quad\quad \textrm{$t>0$, $x=(x_1,y)\in\R^N $},\nonumber
\end{equation}
where the nonlinearity reaction is assumed to be periodic in $y$ and $t$. The time-periodic dependent reaction has been previously investigated in various frameworks, the interested reader are referred to \cite{BR1},\cite{DP1},\cite{NA1},\cite{NA2},\cite{PW1},\cite{SK}. The main difference of the present work with respect to these papers is that here $f$ is not assumed to be periodic in  $x_1$-direction  but be shifted  with the forced speed ${c}$, which can be seen as an effect of climate change. This has been considered in \cite{BR1} for an environment pointwise unfavorable at infinity. Our extension here is to consider $f_s(t,x_1,y,0)$ to be sign-changing depending on the location of $y\in\R^{N-1}$ at the time $t$. We only require $f$ to satisfy a global condition as $x_1\to\pm\infty$. The additional difficulties are due to the fact that we do not a priori require the solutions to be periodic in $y$ nor in $t$ and also we do not impose any boundary conditions as $x_1\to\pm\infty$. The time-periodic dependence of reaction term can be thought of as representation of a seasonal dependence of environment.

We further investigate the concentration of the species in  regions surrounded by highly hostile environments. More precisely, our aim is to describe the dynamics of the species in the first framework not only in the cylindrical domain $\O$ but in the whole space $\R^N$ under the assumption that the environment outside $\O$ becomes more and more unfavorable. From the biological point of view one may wonder whether the species still survives if some parts of the environment becomes extremely unfavorable. This question can be addressed by solving the following mathematical problem. We consider equation (\ref{0}) in the whole space $\R^N$ and study the limit of the sequence of traveling fronts with a reaction term $F_n(x,s)$ such that their growth rates are  negative outside the cylindrical  domain $\O$ and tend to $-\infty$ as $n\to\infty$. These solutions solve the equations 
$$\Delta U_n+c\partial_1 U_n+F_n(x,U_n)=0,\quad x\in\R^N,$$
where $F_n(x,s)=f(x,s)$ for $x\in\ol\O$ and $\dfrac{\partial F_n}{\partial s}(x,0)\to-\infty$ as $n\to\infty$ locally uniformly in $\R^N\setminus\O$. If the species survives, we aim to characterize the limit. This is the object of section 4. Very recently, Guo and Hamel \cite{GH} have studied the similar problem on the periodic and not necessarily connected domains without the effect of climate change, namely when $c=0$. From a different point of view, here we consider  the concentration of the species facing a climate change in an infinite cylindrical domain $\O$ when the exterior domain $\R^N\setminus\O$ becomes extremely unfavorable. To this aim, we first need to ascertain the existence and uniqueness of traveling front for problem (\ref{0}) with Dirichlet boundary condition on  $\partial\O$. The lack of compactness of $\O$ as well as the presence of $c\neq0$ and the fact that near infinity of $\O$, the environment contains both favorable and unfavorable regions   are the main difficulties to be overcome. Here, we will make use of some recent advances of spectral theory in \cite{BR3}.

Finally, the last result is devoted to the study of symmetry breaking of the fronts in $\O.$ The main reason leading to the symmetry breaking is the difference of asymptotic behaviors  near $\pm\infty$. To be more clear, due to the spectral theory for cylindrical domains developed by Berestycki-Nirenberg in \cite{BN}, under some fair assumptions on the growth rate of $f_s(x_1,y,0)$ as $x_1\to\pm\infty$, we can find the exact behaviors of the unique solution of (\ref{0}) as $x_1$ tends to $\pm\infty$ depending on $c$. By conditioning that these behaviors are different, we obtain the asymmetry of the solution. In particular, assuming  $c\neq0$, we will see that the asymmetry holds when $f_s(x_1,y,0)$ converges fast enough to the same negative constant as $x_1\to\pm\infty$.

In the remainder of this section, we give  notations and definitions that are used in the paper. The set $\O$  denotes an infinite straight cylindrical domain $\O=\R\times\omega$, where $\omega$ is an open bounded and smooth domain in $\R^{N-1}$. We  use the notation $x=(x_1,y)\in\R\times\omega$ for the points in $\O$ and denote :
$$\O^+=\{x\in\O, x_1\geq 0, y\in\omega\}\quad;\quad\O^-=\{x\in\O,x_1\leq 0, y\in\omega\};$$
$$\O_r=\{x\in\O,-r< x_1< r, y\in\omega\}.$$
Let  $\mathcal{O}\subset\R^N$ and $L$ be a  uniformly elliptic operator with coefficients bounded on $\mathcal{O}$ defined by
$$L u=a_{ij}(x)\partial_{ij}u(x)+b_i(x)u_i(x)+c(x)u.$$
If $\mathcal{O}$ is smooth and bounded, it is classical that $L$ admits a unique  eigenvalue $-\lambda_D$  (respectively $-\lambda_N$) and a unique (up to multiplication) eigenfunction with Dirichlet (respectively Neumann) boundary condition i.e :
\begin{equation}
\left\{\begin{array}{ll}
 {L}\varphi=-\lambda_D\varphi&x\in\mathcal{O}\\
  \varphi=0 &x\in\partial\mathcal{O}.\nonumber
\end{array} \right.\\\quad\quad\quad\quad
\left\{\begin{array}{ll}
  {L}\varphi=-\lambda_N\varphi&x\in\mathcal{O}\\
  \partial_\nu\varphi=0 &x\in\partial\mathcal{O}.\nonumber
\end{array} \right.
\end{equation}
As is known, the principal eigenpair (eigenvalue and eigenfunction) for an associated elliptic operator plays an important role in deriving persistence results and long time dynamics. In 1994, Berestycki, Nirenberg and Varadhan \cite{BNV} gave a very simple and general definition of the principal eigenvalue of ${L}$ for general domains whose boundaries are not necessarily smooth and later  Berestycki, Hamel and Rossi \cite{BR5} used this approach to define generalized principal eigenvalues in unbounded domains. More precisely now allowing $\mathcal{O}$ to be a smooth and possibly unbounded domain,    they defined the generalized Neumann principal eigenvalue as follow
\begin{eqnarray}
\lambda_N(-{L},\mathcal{O}):=\sup\{\lambda\in\R:\exists\phi\in W^{2,N}_{loc}(\mathcal{O}),\phi>0,({L}+\lambda)\phi\leq0\textrm{ a.e in $\mathcal{O}$, $\partial_\nu\phi\geq0$ on $\partial\mathcal{O}$}\}.\label{2}
\end{eqnarray}
When $\mathcal{O}$ is bounded, these two notions coincide : $\lambda_N(-{L},\mathcal{O})=\lambda_N$. We adopt this definition in our paper. Under the assumption $a_{ij}$, $b_i$, $c\in L^\infty(\mathcal{O})$, it is easily seen that $\lambda_N(-{L},\mathcal{O})$ is well defined. For related definition and more properties of generalized eigenvalues, the reader is referred to \cite{BR3}.


\subsection{Hypotheses and main results}
\subsubsection{The cylindrical environment without time dependence}
The function $f(x_1,y,s):\R\times\omega\times [0,+\infty)\mapsto\R$ is assumed to be continuous in $x_1$, measurable in $y$, and locally Lipschitz continuous in $s$. In addition, the map $s\mapsto f(x,s)$ is of class $C^{1}(0,s_0)$ for some positive constant $s_0$, uniformly in $x$. We assume that $f(x,0)=0$, $\forall x\in \O$.

As we will see, the dynamics is controlled by traveling fronts. Thus, we look for the solutions of Eq. (\ref{0}) of the type $u(t,x)=U(x_1-ct,y)$, which are called  traveling front solutions with forced speed $c$. Such solutions are given by the equation :
\begin{equation}
\left\{\begin{array}{ll}
  \Delta U+c\partial_1 U+f(x,U)=0 &\textrm{$x\in\O$}\\
  \partial_\nu U=0 & \textrm{$x\in\partial\O$}\\
  \textrm{$U>0$ in $\O$}\\
  \textrm{$U$ is bounded.} \label{3}
\end{array} \right.
\end{equation}
In the results below, we will require the following hypotheses on $f$ :

\begin{equation}
\textrm{$\exists S>0$ such that $f(x,s)\leq 0$ for $s\geq S$,  $\forall x\in \O$,}\label{4}
\end{equation}
\begin{eqnarray}
\begin{array}{cc}
&\textrm{$s\rightarrow f(x,s)/s $ is nonincreasing a.e in $ \O$ and there exist $D\subset\O$, $|D|>0$}\\
&\textrm{such that it is strictly decreasing in $D$}.\label{5}
\end{array}
\end{eqnarray}
Both of these conditions are classical in the context of population dynamics. The first condition means that there is a maximum carrying capacity effect : when the population density is very large, the death rate is higher than the birth rate and the population decreases. The second condition means the intrinsic growth rate decreases when the population density is increasing. This is due to the intraspecific
competition for resources. 

As has been already mentioned, we are looking for a condition that applies to mixed environments. We assume that there exists a measurable bounded function $\mu:\omega\mapsto \R$ such that
\begin{equation}
\mu(y)=\mathop {\limsup}\limits_{|x_1 | \to \infty } f_s(x_1,y,0)\quad\quad\textrm{and}\quad\quad\lambda_\mu:=\lambda_N(-\Delta_y-\mu(y),\omega)>0.\label{6}
\end{equation}
Condition (\ref{6}) means that the mixed environment is globally unfavorable at infinity in the direction of $x_1$. This generalizes  the  condition
\begin{equation}
f_s(x_1,y,0)\leq -m<0 \quad\quad\textrm{for $|x_1|$ large enough, $y\in\omega$},\label{6.1}
\end{equation}
which is used in \cite{BR2}. Indeed, if $\mu(y)\leq-m<0$ one gets  $\lambda_N(-\Delta_y-\mu(y),\omega)\geq m>0$. Our generalization here aims at allowing $f_s(x_1,y,0)$ to change sign  when $|x_1|$ is large.  An illustration of  condition (\ref{6}) will be given in Section 2.1.

We are now ready to state the main results regarding this framework
\subsubsection{The existence and uniqueness  of traveling front}
The existence and uniqueness results are directly conditioned by the amplitude of the speed of climate change and the sign of the principal eigenvalue $\lambda_0:=\lambda_N(-\mathcal{L}_0,\O)$, where $$\mathcal{L}_0\varphi=\Delta\varphi+f_s(x,0)\varphi.$$ 

\subsubsection*{Definition of the critical speed $c^*$}

By using the Liouville transformation $V(x_1,y):=U(x_1,y)e^{\frac{c}{2}x_1}$,  problem (\ref{3}) is equivalent to
\begin{equation}
\left\{\begin{array}{ll}
  \Delta V+f(x_1,y,V(x_1,y)e^{-\frac{c}{2}x_1})e^{\frac{c}{2}x_1}-\dfrac{c^2}{4}V=0 &\textrm{$x\in\O$}\\
  \partial_\nu V=0 & \textrm{$x\in\partial\O$}\\
  \textrm{$V>0$ in $\O$}\\
  \textrm{$V(x_1,y)e^{-\frac{c}{2}x_1}$ is bounded.} \label{7}
\end{array} \right.
\end{equation}
Linearizing this equation about $0$, one gets a self-adjoint operator :
$$\widetilde{\mathcal{L}}w:=\Delta w+(f_s(x,0)-c^2/4)w.$$
We set $\mathcal{L}_0\varphi=\Delta\varphi +f_s(x,0)\varphi$ and  $\lambda_0:=\lambda_{N}(-\mathcal{L}_0,\O)$ is the generalized Neumann principal eigenvalue of $\mathcal{L}_0$ in $\O$. Since $f_s(x,0)$ is bounded, $\lambda_0$ is well defined and finite. We are led to
\begin{definition}
We define the critical speed by
\begin{equation}
c^*:=2\sqrt{-\lambda_0} \quad\quad \textrm{if $\lambda_0<0$}.
\end{equation}
\end{definition}
\begin{proposition}\label{pro:p1}
The eigenvalue $\lambda_N(-\Delta-c\partial_1-f_s(x,0),\O)<0$ iff $0\leq c<c^*.$
\begin{proof}
Let $\mathcal{L}=\Delta+c\partial_1+f_s(x,0)$. Since we do not assume the test-function of (\ref{2}) to be bounded, it immediately follows from the definition (\ref{2}) that  $\lambda_N(-\mathcal{L},\O)=\lambda_N(-\widetilde{\mathcal{L}},\O)=\lambda_0+\frac{c^2}{4}$.
\end{proof}
\end{proposition}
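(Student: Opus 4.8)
The plan is to reduce everything to the single identity $\lambda_N(-\mathcal{L},\O)=\lambda_0+\frac{c^2}{4}$, where $\mathcal{L}=\Delta+c\partial_1+f_s(x,0)$, after which the equivalence is immediate: since $c\geq0$ and, when $\lambda_0<0$, $c^*=2\sqrt{-\lambda_0}$, the inequality $\lambda_0+\frac{c^2}{4}<0$ holds exactly when $0\leq c<c^*$; and when $\lambda_0\geq 0$ the quantity $\lambda_0+\frac{c^2}{4}$ is never negative, which matches the fact that $c^*$ is then undefined and $\{0\leq c<c^*\}$ empty. So the whole content is the identity, which I would prove in two steps mirroring the Liouville reduction from (\ref{3}) to (\ref{7}).

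First I would show $\lambda_N(-\mathcal{L},\O)=\lambda_N(-\widetilde{\mathcal{L}},\O)$ using the change of unknown $\psi(x_1,y)=\phi(x_1,y)e^{\frac{c}{2}x_1}$. A direct computation gives, for any positive $\phi\in W^{2,N}_{\loc}(\O)$,
\begin{equation}
(\widetilde{\mathcal{L}}+\lambda)\psi=e^{\frac{c}{2}x_1}\,(\mathcal{L}+\lambda)\phi\quad\textrm{a.e. in }\O,\nonumber
\end{equation}
so $(\mathcal{L}+\lambda)\phi\leq 0$ a.e. iff $(\widetilde{\mathcal{L}}+\lambda)\psi\leq 0$ a.e. Because the weight depends only on $x_1$ while $\partial\O=\R\times\partial\omega$ and $\nu$ points in the $y$-directions only, one has $\partial_\nu\psi=e^{\frac{c}{2}x_1}\partial_\nu\phi$, so the sign condition $\partial_\nu\phi\geq0$ on $\partial\O$ is preserved; moreover $\phi>0\Leftrightarrow\psi>0$ and $\phi\in W^{2,N}_{\loc}\Leftrightarrow\psi\in W^{2,N}_{\loc}$ since $e^{\frac{c}{2}x_1}$ is smooth. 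Hence $\phi\mapsto\psi$ is a bijection between the admissible test functions for $-\mathcal{L}$ and for $-\widetilde{\mathcal{L}}$ at the same level $\lambda$, and the two suprema in (\ref{2}) coincide.

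The second step is the elementary shift $\widetilde{\mathcal{L}}=\mathcal{L}_0-\frac{c^2}{4}$: for positive $\phi$ one has $(\widetilde{\mathcal{L}}+\lambda)\phi=(\mathcal{L}_0+(\lambda-\frac{c^2}{4}))\phi$, so $\phi$ is admissible for $-\widetilde{\mathcal{L}}$ at level $\lambda$ iff it is admissible for $-\mathcal{L}_0$ at level $\lambda-\frac{c^2}{4}$, and taking suprema gives $\lambda_N(-\widetilde{\mathcal{L}},\O)=\lambda_0+\frac{c^2}{4}$. Combining the two steps yields the identity and hence the proposition.

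The only subtle point — and the reason the definition (\ref{2}) is carefully phrased without any boundedness requirement on the test function — is the first step: the Liouville weight $e^{\frac{c}{2}x_1}$ is unbounded on the infinite cylinder, so it turns bounded supersolutions into unbounded ones and conversely. Had the generalized Neumann eigenvalue been defined via bounded test functions, the map $\phi\mapsto\psi$ would not be level-preserving and the identity could break down; with the definition as given, boundedness is irrelevant and the argument goes through verbatim. Everything else is routine.
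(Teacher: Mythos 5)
Your proposal is correct and follows exactly the paper's argument: the paper's one-line proof also rests on the Liouville change of unknown $\psi=\phi e^{\frac{c}{2}x_1}$ turning admissible test functions for $-\mathcal{L}$ into admissible test functions for $-\widetilde{\mathcal{L}}=-\mathcal{L}_0+\frac{c^2}{4}$ at the same level, with the explicit remark that this works only because definition (\ref{2}) does not require bounded test functions. You have merely supplied the routine computations that the paper declares immediate, including the correct observation that $\partial_\nu\psi=e^{\frac{c}{2}x_1}\partial_\nu\phi$ on $\partial\O=\R\times\partial\omega$.
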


Our first result is

\begin{theorem}\label{thm:T1}
Assume that (\ref{4})-(\ref{6}) hold. Then Eq. (\ref{0})   admits a traveling front solution, that is a solution  of (\ref{3}) if and only if $0\leq c<c^*$. Moreover,  the front is unique when it  exists.
\end{theorem}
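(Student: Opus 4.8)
The plan is to read \thm{T1} through Proposition~\ref{pro:p1}: since $\lambda_N(-\Delta-c\partial_1-f_s(x,0),\O)=\lambda_0+c^2/4$, the condition $0\le c<c^*$ is precisely the condition $\lambda_0+c^2/4<0$, so it suffices to show that (\ref{3}) is solvable if and only if this generalized principal eigenvalue is negative, and then that the solution is unique. I would treat the three implications in turn, keeping in mind that the Liouville substitution $V=Ue^{cx_1/2}$ already displayed in the paper renders the linearized problem self-adjoint, and that the only place the global hypothesis (\ref{6}) is really used is to make the environment coercive near $x_1=\pm\infty$ through $\lambda_\mu>0$.

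For the ``if'' implication, suppose $\lambda_0+c^2/4=\lambda_N(-\mathcal{L},\O)<0$, where $\mathcal{L}=\Delta+c\partial_1+f_s(x,0)$. Hypothesis (\ref{4}) provides the constant supersolution $\overline u\equiv S$. For a subsolution, I would invoke the continuity of the generalized principal eigenvalue along the exhaustion $\O_r\nearrow\O$ — with Neumann data on the lateral boundary and Dirichlet data on the caps $\{x_1=\pm r\}$, as in \cite{BR3},\cite{BN} — to fix $r$ with $\lambda_N(-\mathcal{L},\O_r)<0$, take the corresponding positive eigenfunction $\varphi_r$, and check (using $f(x,s)=f_s(x,0)s+o(s)$ uniformly in $x$) that $\underline u=\delta\varphi_r$, extended by $0$, is a generalized subsolution of (\ref{3}) for $\delta$ small, with $\underline u\le\overline u$. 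Solving the mixed boundary value problem between $\underline u$ and $\overline u$ on $\O_R$ and letting $R\to\infty$, interior elliptic estimates and a diagonal extraction yield a bounded solution $U$ of (\ref{3}) on $\O$ with $U\ge\underline u\not\equiv0$; the strong maximum principle (recall $f(x,0)=0$) then gives $U>0$.

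For the ``only if'' implication, let $U$ solve (\ref{3}). Since $s\mapsto f(x,s)/s$ is nonincreasing, and strictly decreasing on $D$, the function $U$ is a positive bounded solution of the \emph{linear} equation $\Delta U+c\partial_1U+g(x)U=0$ with $g:=f(x,U)/U\le f_s(x,0)$ and $g<f_s(x,0)$ on $D$. A positive bounded solution of a linear equation forces $\lambda_N(-\Delta-c\partial_1-g,\O)\le0$ (a maximum-principle property of the generalized principal eigenvalue, \cite{BR3}); strict monotonicity of $\lambda_N$ in the zeroth-order coefficient on the set $D$ of positive measure — legitimate here precisely because (\ref{6}) prevents the relevant eigenfunction from escaping to infinity away from $D$ — then improves this to $\lambda_N(-\mathcal{L},\O)=\lambda_0+c^2/4<0$, i.e. $0\le c<c^*$ by Proposition~\ref{pro:p1}. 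This also shows that no solution exists when $\lambda_0\ge0$ or when $c\ge c^*$.

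For uniqueness I would use the sliding method. First, a Phragm\'en--Lindel\"of comparison using (\ref{6}) — bounding a solution near $x_1=\pm\infty$ by $\psi_\mu(y)e^{-\gamma|x_1|}$, where $\psi_\mu>0$ is the eigenfunction associated with $\lambda_\mu>0$ — shows that every solution of (\ref{3}) decays exponentially as $|x_1|\to\infty$, at a rate governed only by $c$ and $\lambda_\mu$. Given two solutions $U_1,U_2$, this decay ensures $\tau U_2\ge U_1$ on $\O$ for $\tau$ large; decreasing $\tau$ to $\tau^{*}=\inf\{\tau:\tau U_2\ge U_1\}$, if $\tau^{*}>1$ one reaches a contact point — interior, or on $\partial\O$ handled by the Hopf lemma — where the strict sublinearity (\ref{5}) on $D$ contradicts the strong maximum principle applied to $\tau^{*}U_2-U_1$; hence $\tau^{*}\le1$, so $U_1\le U_2$, and exchanging the roles of $U_1,U_2$ gives $U_1=U_2$. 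I expect the main obstacle to be exactly the feature that distinguishes this setting from the pointwise condition (\ref{6.1}) of \cite{BR2}: since $f_s(x_1,y,0)$ may change sign as $|x_1|\to\infty$, there is no pointwise barrier on the non-compact domain $\O$, so the compactness in the existence proof, the exponential decay estimates, and the strict eigenvalue comparison in the necessity proof must all be extracted from the single spectral inequality $\lambda_\mu>0$ together with the localization properties of the generalized principal eigenvalue from \cite{BR3}.
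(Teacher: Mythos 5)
Your overall architecture matches the paper's (truncated-cylinder eigenfunction extended by zero as subsolution, the constant $S$ as supersolution, exponential decay from (\ref{6}), a sliding comparison for uniqueness), and the existence half is essentially the paper's argument. There are, however, two genuine gaps.

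First, the sliding step in your uniqueness argument does not get off the ground as stated. Proposition \ref{lem0} gives only an \emph{upper} exponential bound on solutions; under (\ref{4})--(\ref{6}) there is no lower bound on the decay of $U_2$ (the quotients $f(x,U_i)/U_i$ need not agree near infinity, and $f_s(x_1,y,0)$ may even tend to $-\infty$), so the set $\{\tau:\tau U_2\ge U_1 \text{ in }\O\}$ may be empty; and even when $\tau^*<\infty$, the contact point of $\tau^*U_2-U_1$ need not be attained, since a minimizing sequence can drift to $x_1=\pm\infty$. The paper's device is precisely the shifted comparison $kU\ge V-\varepsilon\varphi(y)$, with $\varphi$ the cross-sectional eigenfunction of $-\Delta_y-\mu$ satisfying $\inf_{\ol\omega}\varphi>0$: near infinity $V-\varepsilon\varphi<0<kU$ automatically, and when the contact point escapes to infinity the extra term $\tfrac{\lambda_\mu}{2}\varepsilon\varphi$ produced by (\ref{6}) restores the strict inequality needed for the strong maximum principle. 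Without some such correction the uniqueness proof is incomplete.

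Second, your ``only if'' direction is a different route from the paper's and rests on two unproved spectral assertions. The claim that a positive bounded solution of the linear equation forces $\lambda_N(-\Delta-c\partial_1-g,\O)\le0$ does not follow from definition (\ref{2}) --- which in fact yields the opposite inequality $\lambda_N\ge0$ upon taking $U$ itself as test function --- and proving it requires comparison against a test function bounded away from zero, i.e.\ again the $\varepsilon\varphi(y)$ mechanism. More seriously, strict monotonicity of the generalized principal eigenvalue under a perturbation of the zeroth-order coefficient on a set of positive measure is false in general unbounded domains (a small compactly supported perturbation of $-\Delta$ in $\R^N$ leaves $\lambda_1=0$ unchanged), and your parenthetical appeal to (\ref{6}) is a heuristic, not a proof; yet this strictness is exactly what you need to exclude the borderline case $\lambda_0+c^2/4=0$. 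The paper avoids both issues: it normalizes a generalized Neumann eigenfunction $\varphi_\infty$ associated with $\tilde{\lambda}_1\ge0$ so that $\varphi_\infty(0)<U(0)$, notes that (\ref{5}) makes $\varphi_\infty$ a supersolution of (\ref{3}), and applies the comparison principle to conclude $U\le\varphi_\infty$, a contradiction. You should either adopt that argument or supply complete proofs of your two spectral claims.
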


This theorem yields an analogue to the results obtained  in \cite{BDNZ},\cite{BR2},\cite{PL}. Indeed,  under the assumption of type (\ref{6}), one should not expect too many times the same thing. We also point out that   the uniqueness of  (\ref{3}) is achieved in the class of positive bounded solutions without necessarily  prescribing the boundary condition as $x_1\to\pm\infty$.

The  next two results deal with the long time dynamics of the evolution equation (\ref{0}) in $L^\infty(\O)$ and $L^1(\O)$. 
\subsubsection{Long time dynamics}

 \begin{theorem}\label{thm:T2}
Let $u(t,x)$ be the solution of (\ref{0})   with initial condition $u(0,x)\in L^\infty(\O)$, which is nonnegative and not identically equal to zero. Assume that   $(\ref{4})-(\ref{6})$ hold.

i) If $c\geq c^*$, then
$$\lim_{t\to\infty}\|u(t,x)\|_{\infty,\O}=0;$$

ii) if $0\leq c<c^*$ then
$$\lim_{t\to\infty}\|(u(t,x_1,y)-U(x_1-ct,y))\|_{\infty,\O}=0,$$
where $U$ is the unique solution of (\ref{3}) and $\|\cdot\|_{\infty,\O}$ denotes the sup-norm on $\O$.
\end{theorem}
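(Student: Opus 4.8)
\noindent I would prove both parts at once in the moving frame, squeezing the solution between two monotone families of barriers whose limits are pinned down by \thm{T1} and Proposition~\ref{pro:p1}. Set $v(t,x_1,y):=u(t,x_1+ct,y)$, so that $v$ solves $v_t=\Delta v+c\partial_1 v+f(x,v)$ in $\O$ with $\partial_\nu v=0$ on $\partial\O$, the stationary solutions of which are exactly the solutions of (\ref{3}), and $\|u(t,\cdot)-U(\cdot-ct)\|_{\infty,\O}=\|v(t,\cdot)-U\|_{\infty,\O}$, $\|u(t,\cdot)\|_{\infty,\O}=\|v(t,\cdot)\|_{\infty,\O}$; hence it suffices to treat $v$. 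By (\ref{4}) the constant $\kappa:=\max\{\|u(0,\cdot)\|_{\infty,\O},S\}$ is a stationary supersolution and $0$ a subsolution, so (since $f$ is locally Lipschitz in $s$) the parabolic comparison principle gives global existence and $0\le v\le\kappa$, while the strong maximum principle gives $v(t,\cdot)>0$ for $t>0$; replacing the initial time by a small positive one, we may assume $v(0,\cdot)$ continuous and positive, still bounded by $\kappa$.

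Next I would introduce the upper barrier $\bar v$, the solution with $\bar v(0,\cdot)\equiv\kappa$. Since $\kappa$ is a stationary supersolution, $t\mapsto\bar v(t,\cdot)$ is nonincreasing, and the a priori bound together with parabolic estimates upgrades this to $\bar v(t,\cdot)\downarrow\bar V$ in $C^2_{loc}$, where $\bar V$ is a bounded nonnegative solution of (\ref{3}) (no condition is imposed at $x_1=\pm\infty$), and $v(t,\cdot)\le\bar v(t,\cdot)$ by comparison. If $c\ge c^*$ — equivalently, by Proposition~\ref{pro:p1}, $\lambda_N(-\Delta-c\partial_1-f_s(x,0),\O)\ge0$, which covers all $c\ge0$ when $\lambda_0\ge0$ — then \thm{T1} excludes any positive bounded solution of (\ref{3}), so by the strong maximum principle $\bar V\equiv0$, whence $0\le v(t,\cdot)\le\bar v(t,\cdot)\downarrow0$ locally uniformly. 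If $0\le c<c^*$, \thm{T1} yields the unique solution $U>0$ of (\ref{3}); granting the lower barrier below, $\underline v(t,\cdot)\le v(t,\cdot)\le\bar v(t,\cdot)$ with $\underline v(t,\cdot)\uparrow U$ forces $\bar V\ge U>0$, so $\bar V$ is positive bounded and the uniqueness in \thm{T1} gives $\bar V=U$; thus $\bar v(t,\cdot)\downarrow U$ and $\underline v(t,\cdot)\uparrow U$.

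For the lower barrier when $0\le c<c^*$, Proposition~\ref{pro:p1} gives $\lambda_N(-\Delta-c\partial_1-f_s(x,0),\O)=\lambda_0+c^2/4<0$. Using that (\ref{6}) confines the principal mode of $\Delta+c\partial_1+f_s(x,0)$ to a bounded region of $\O$ (a point that relies on the spectral theory of \cite{BR3}), the principal eigenvalue of this operator on $\O_R$ with Dirichlet data on the caps $\{\pm R\}\times\omega$ and Neumann data on the lateral boundary tends to $\lambda_0+c^2/4$ as $R\to\infty$, hence is negative for $R$ large; if $\psi_R>0$ is the corresponding eigenfunction, then $\Delta\psi_R+c\partial_1\psi_R+f_s(x,0)\psi_R\ge0$ in $\O_R$, and the function equal to $\tau\psi_R$ in $\O_R$ and to $0$ in $\O\meno\O_R$ is, by the Hopf lemma, a continuous weak subsolution of this linear operator on all of $\O$. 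Since $f(x,s)=f_s(x,0)s+o(s)$ as $s\to0^+$ uniformly in $x$, for $\tau>0$ small this function $\underline V$ is a bounded, nonnegative, not identically zero subsolution of the stationary equation, and shrinking $\tau$ further (using positivity and continuity of $U$ and $v(0,\cdot)$ on the compact $\overline{\O_R}$) we arrange $\underline V\le\min\{U,v(0,\cdot)\}$. The solution $\underline v$ with $\underline v(0,\cdot)=\underline V$ is then nondecreasing in $t$ and bounded, so $\underline v(t,\cdot)\uparrow\underline V_\infty$ in $C^2_{loc}$, a bounded solution of (\ref{3}) with $\underline V_\infty\ge\underline V\not\equiv0$, hence positive, hence $\underline V_\infty=U$ by \thm{T1}; comparison gives $\underline v(t,\cdot)\le v(t,\cdot)$ and $\underline v(t,\cdot)\le U$ for all $t\ge0$, as used above.

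Finally I would promote these locally uniform convergences to convergence in $L^\infty(\O)$ using (\ref{6}). For $R$ large, $\bar f^R(y):=\sup_{|x_1|\ge R}f_s(x_1,y,0)$ decreases to $\mu(y)$, so by monotonicity and continuity of the principal eigenvalue in the potential $\lambda':=\lambda_N(-\Delta_y-\bar f^R,\omega)>0$; with $\Phi>0$ the Neumann eigenfunction on $\omega$ normalized by $\min_\omega\Phi=1$, $\gamma>0$ small enough that $\gamma^2\pm c\gamma-\lambda'\le0$, and $0<\delta\le\lambda'$, the function $W(t,x):=\kappa\Phi(y)\bigl(e^{-\gamma(|x_1|-R)}+e^{-\delta t}\bigr)$ is a supersolution on $\{|x_1|>R\}$ of the linearized equation $w_t=\Delta w+c\partial_1 w+f_s(x,0)w$, of which $v$ and $\bar v$ are subsolutions by the KPP condition (\ref{5}); since $W\ge\kappa$ on $\{|x_1|=R\}\cup\{t=0\}$ and the linearized operator has positive principal eigenvalue on $\{|x_1|>R\}$ (again from $\lambda'>0$), bounded comparison gives $\bar v(t,x)\le\kappa(\max_\omega\Phi)\bigl(e^{-\gamma(|x_1|-R)}+e^{-\delta t}\bigr)$ there, which is $<\e$ for $|x_1|\ge R'$, $t\ge T$ once $R',T$ are large; together with Dini's theorem on the compact $\overline{\O_{R'}}$ this gives $\bar v(t,\cdot)\to\bar V$ in $L^\infty(\O)$, and the stationary version of the same barrier applied to the subsolution $U$ of the linearized elliptic equation shows $U(x)\to0$ as $|x_1|\to\infty$, whence $\underline v(t,\cdot)\to U$ in $L^\infty(\O)$ as well ($0\le U-\underline v(t,\cdot)\le U$), so $\underline v\le v\le\bar v$ squeezes the claims. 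The step I expect to be the real obstacle is the one in the previous paragraph: the convergence of the mixed principal eigenvalue on $\O_R$ to the generalized Neumann eigenvalue $\lambda_0+c^2/4$ of the whole cylinder, which is precisely where the sharp threshold $c<c^*$ and the global unfavorability condition (\ref{6}) are genuinely used, together with the companion fact — of the same nature — that (\ref{6}) forces the uniform exponential decay at $x_1=\pm\infty$ invoked just above; both rest on \cite{BNV},\cite{BR5},\cite{BR3}.
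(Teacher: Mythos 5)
Your proposal is correct, and it reaches the conclusion by a route that is organized differently from the paper's. The paper first proves a Liouville-type classification of \emph{entire} solutions of the autonomous problem (\ref{V1T2.1}) (Theorem \ref{thm:T'2}), then extracts $\omega$-limits of the Cauchy solution by parabolic estimates and compactness, verifies the nontriviality condition (\ref{V1T2.2}) for these limits, and finally upgrades locally uniform to uniform convergence by contradiction: translating along a sequence with $|x_{1,n}|\to\infty$, passing to a limit that is a subsolution of the linear equation with potential $\mu(y)$, and comparing with the exact solution $S''e^{-\lambda_\mu(t+h)}\varphi(y)$ as $h\to-\infty$. You instead squeeze the Cauchy solution directly between two monotone-in-time trajectories (decreasing from the constant $\kappa$, increasing from the truncated-eigenfunction subsolution $\tau\psi_R$ already used in the existence part of Theorem \ref{thm:T1}), identify both limits through the uniqueness/nonexistence statements of Theorem \ref{thm:T1}, and handle the tails by an explicit space-time supersolution $\kappa\Phi(y)\bigl(e^{-\gamma(|x_1|-R)}+e^{-\delta t}\bigr)$ of the linearized equation. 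This bypasses the entire-solution analysis and the diagonal extractions entirely and gives a quantitative decay rate in the tail; the paper's route, in exchange, isolates the Liouville theorem as a reusable statement and classifies arbitrary $\omega$-limits rather than only the two barrier trajectories. Both proofs consume the same inputs: Proposition \ref{pro:p1}, the convergence $\lambda_R\to\tilde{\lambda}_1$ of the truncated eigenvalues, and condition (\ref{6}) for the behavior as $|x_1|\to\infty$. The one step you should spell out is the comparison of $v$ with $W$ on the unbounded region $\{|x_1|>R\}$: since $W$ does not dominate $v$ near $|x_1|=\infty$ at positive times, you need a maximum principle for bounded functions there; dividing by $\Phi(y)$ turns the zero-order coefficient into $f_s(x,0)-\bar f^R(y)-\lambda'\leq-\lambda'<0$, after which the bounded comparison principle applies --- this is exactly the device the paper uses at the end of its own proof, so it is a presentational rather than a substantive gap.
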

This theorem means that  a species cannot keep pace with a climate change if its speed is too large. This theorem generalizes the results in \cite{BDNZ}, \cite{BR2}. Note that, in \cite{BDNZ}, \cite{BR2}, condition (\ref{6.1}) was actually used in the proofs, in particular, to derive the exponential behavior at infinity.  Although our approaches are similar  to those in \cite{BDNZ},  \cite{BR2},  new difficulties arise from the non-constant unfavorable characterization at infinity, especially to obtain the comparison principle. 

 The next result is concerned with the $L^1(\O)$ convergence of the traveling fronts. This result describes the long time dynamics of the total population.

\begin{theorem}
\label{thm:T3}
Let $u(t,x)$ be the solution of $(\ref{0})$ with initial condition $u(0,x)\in L^\infty(\O) \cap L^1(\O)$, which is nonnegative and not identically equal to zero. Assume that  $(\ref{4})-(\ref{6})$ are satisfied then the same conclusions as in Theorem \ref{thm:T2} hold with the  $L^1(\O)$ norm besides $L^\infty(\O)$ norm.
\end{theorem}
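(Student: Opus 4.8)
The plan is to upgrade the $L^\infty$ statement of Theorem~\ref{thm:T2} to $L^1$ by controlling, uniformly in time, the mass of the solution near $x_1=\pm\infty$, through a weighted $L^1$ differential inequality built on the eigenfunction associated with $\lambda_\mu$ in (\ref{6}). I first pass to the moving frame: setting $\xi=x_1-ct$ and $v(t,\xi,y):=u(t,\xi+ct,y)$, the function $v$ solves $v_t=\Delta v+c\partial_\xi v+f(\xi,y,v)$ in $\O$ with Neumann condition on $\partial\O$, and by translation invariance of Lebesgue measure $\|u(t,\cdot)-U(\cdot-ct,\cdot)\|_{L^p(\O)}=\|v(t,\cdot)-U\|_{L^p(\O)}$ for every $p\in[1,\infty]$ (with $U\equiv0$ in the regime $c\ge c^*$). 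Since $f(x,0)=0$ and $f$ is Lipschitz in $s$ on the bounded range of $v$ (which lies in $[0,\max(S,\|u_0\|_\infty)]$ by (\ref{4}) and comparison, $u_0:=u(0,\cdot)$), one has $|f(\xi,y,s)|\le L_0|s|$ there, hence $0\le v(t,\cdot)\le e^{L_0t}\,e^{t\Delta_N}u_0(\cdot+ct)$, where $e^{t\Delta_N}$ is the Neumann heat semigroup on $\O$; because this semigroup (which has a product kernel on $\O=\R\times\omega$) is mass preserving, $v(t,\cdot)\in L^1(\O)\cap L^\infty(\O)$ for all $t\ge0$, and splitting $u_0$ into a compactly supported piece and an $L^1$-small piece, together with interior parabolic estimates, shows that $v(t,\cdot)$ and its first derivatives tend to $0$ as $|\xi|\to\infty$ for each fixed $t>0$. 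I also recall that $U$ itself decays exponentially as $|\xi|\to\infty$ — this is part of the asymptotic analysis underlying Theorems~\ref{thm:T1} and~\ref{thm:T2} — so $U\in L^1(\O)$ and $w:=v-U\in L^1(\O)$ for every $t$.

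The heart of the matter is a weighted $L^1$ estimate. Let $\phi=\phi_\mu>0$ be the principal Neumann eigenfunction of $-\Delta_y-\mu(y)$ on $\omega$, so $-\Delta_y\phi=(\mu(y)+\lambda_\mu)\phi$; since $\omega$ is bounded and smooth and $\mu\in L^\infty(\omega)$, $\phi$ extends continuously to $\overline\omega$ and $0<c_0\le\phi\le C_0$ there. Fix $\delta\in(0,\lambda_\mu)$ and, using (\ref{6}), a radius $R=R_\delta$ such that $f_s(\xi,y,0)\le\mu(y)+\delta$ whenever $|\xi|\ge R$. The function $w$ satisfies $w_t=\Delta w+c\partial_\xi w+a\,w$ with $a(t,\xi,y):=\big(f(\xi,y,v)-f(\xi,y,U)\big)/(v-U)$ where $v\ne U$; by the monotonicity hypothesis (\ref{5}) one has $a\le f_s(\xi,y,0)$ pointwise, so $a\le\mu(y)+\delta$ on $\{|\xi|\ge R\}$, while $|a|\le L_0$ everywhere. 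Multiplying the equation for $w$ by $\sign(w)\,\phi(y)$ and integrating over $\O$ — approximating $|w|$ by smooth convex functions, invoking Kato's inequality, the Neumann conditions for $w$ and for $\phi$, the identity $\partial_\xi\phi\equiv0$, and the decay at infinity of $w$ and $\partial_\xi w$ to annihilate every boundary term, including those at $\xi=\pm\infty$ — I obtain
\[
\frac{d}{dt}\int_\O |w(t)|\,\phi\,dx\ \le\ \int_\O |w(t)|\,\big(\Delta_y\phi+a\,\phi\big)\,dx\ =\ \int_\O |w(t)|\,\big(a-\mu-\lambda_\mu\big)\,\phi\,dx .
\]
Splitting $\O=\O_R\cup(\O\setminus\O_R)$, using $a-\mu-\lambda_\mu\le\delta-\lambda_\mu$ on $\O\setminus\O_R$ and $|a-\mu-\lambda_\mu|\le C_1$ on $\O_R$, this becomes, with $\beta:=\lambda_\mu-\delta>0$,
\[
\frac{d}{dt}\int_\O |w(t)|\,\phi\,dx\ \le\ -\,\beta\int_\O |w(t)|\,\phi\,dx\ +\ C_2\int_{\O_R}|w(t)|\,dx .
\]

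Finally I bring in Theorem~\ref{thm:T2}. As $|\O_R|<\infty$, the quantity $\kappa(t):=C_2\int_{\O_R}|w(t)|\,dx\le C_2|\O_R|\,\|w(t)\|_{\infty,\O}$ tends to $0$ as $t\to\infty$: indeed $\|w(t)\|_{\infty,\O}=\|u(t,\cdot)\|_{\infty,\O}\to0$ when $c\ge c^*$, and $\|w(t)\|_{\infty,\O}=\|u(t,\cdot)-U(\cdot-ct,\cdot)\|_{\infty,\O}\to0$ when $0\le c<c^*$. Hence $y(t):=\int_\O|w(t)|\,\phi\,dx\in[0,\infty)$ satisfies $y'\le-\beta y+\kappa$ with $\kappa(t)\to0^+$, so $y(t)\le y(T)e^{-\beta(t-T)}+\int_T^t e^{-\beta(t-s)}\kappa(s)\,ds\to0$ as $t\to\infty$; since $\phi\ge c_0>0$ this gives $\|w(t)\|_{L^1(\O)}\le c_0^{-1}y(t)\to0$, i.e. $\|u(t,\cdot)\|_{L^1(\O)}\to0$ when $c\ge c^*$ and $\|u(t,\cdot)-U(\cdot-ct,\cdot)\|_{L^1(\O)}\to0$ when $0\le c<c^*$, which is the claim. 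I expect the delicate part to be the weighted integration by parts on the unbounded cylinder: one must genuinely have $v(t,\cdot)$ and its gradient vanishing as $|\xi|\to\infty$ so the fluxes at $\xi=\pm\infty$ disappear, and one must use (\ref{6}) as a spectral gap that is effective uniformly in $y$ as $|x_1|\to\infty$ (the existence of $R_\delta$); these are handled, respectively, by interior parabolic/elliptic estimates and by the supersolutions at infinity already built for Theorems~\ref{thm:T1}--\ref{thm:T2}, and they are exactly where the non-compactness of $\O$ and the merely global — rather than pointwise — unfavorability at infinity come into play.
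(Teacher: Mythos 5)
Your argument is correct, and it reaches the conclusion by a genuinely different route from the paper. The paper first reduces to \emph{nonnegative} differences by sandwiching: it evolves the data $\max(u_0,W)$ and $\min(u_0,W)$, so that $\overline w=\overline u-W\ge0$ and $\underline w=W-\underline u\ge0$ solve linear equations with zero-order coefficients bounded by $f_s(x,0)$, and then invokes a separate $L^1$ lemma (Lemma \ref{lem:l3}). That lemma is proved by superposition, $w=w_1+w_2$ with $P=\partial_t-\Delta-c\partial_1-\mu(y)-\delta$: the homogeneous part $w_1$ decays exponentially in $L^\infty$ (via the weight $e^{(\lambda_\mu-\delta)t}/\varphi$) while its $L^1$ norm stays bounded through the sliding averages $v_1^r$, and the inhomogeneous part $w_2$ is trapped under a time-independent integrable barrier $Ce^{-\tau|x_1|}\varphi(y)$; the conclusion then follows from dominated convergence, using only \emph{pointwise} convergence of $w$ to zero. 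You instead run a weighted $L^1$ contraction directly on the signed difference $w=v-U$: Kato's inequality together with the eigenfunction weight $\phi_\mu(y)$ converts condition (\ref{6}) into the dissipative inequality $y'\le-\beta y+\kappa$ with $\kappa(t)\le C\|w(t)\|_{\infty,\O}\to0$ by Theorem \ref{thm:T2}, and Gronwall finishes. Your route is shorter and yields a quantitative $L^1$ decay rate slaved to the $L^\infty$ rate, at the price of requiring the \emph{uniform} convergence of Theorem \ref{thm:T2} as input (the paper's lemma needs only pointwise convergence but must construct the integrable barrier instead); the key inequality $a\le f_s(x,0)$ for the difference quotient does follow from (\ref{5}), exactly as the paper uses for $\overline\zeta,\underline\zeta$. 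The only points that need to be written out carefully are the ones you already flag: the vanishing of the fluxes at $x_1=\pm\infty$ in the weighted integration by parts (available from your semigroup bound for $v$, interior parabolic estimates, and Proposition \ref{lem0} for the exponential decay of $U$ and $\nabla U$), and the distributional Kato step compatible with the Neumann condition --- the latter can be bypassed entirely by borrowing the paper's sandwich $\underline u\le\tilde u\le\overline u$, so that the two differences are nonnegative and $\mathrm{sign}(w)\equiv1$.
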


%
%
%
%

\subsubsection{The partially periodic environment with  time dependence}

We now consider  problem (\ref{0}) in  partially periodic environments with seasonal dependence. Namely, the reaction term $f$ now depends periodically in the time variable and   (\ref{0}) becomes
\begin{equation}
u_t-\Delta u=f({t},x_1-{c}t,y,u) \quad\quad \textrm{$t\in\R$, $x=(x_1,y)\in \R^N $},\label{8}
\end{equation}
where $c>0$ is the given forced speed and $f$ is now assumed to be periodic in $y$. More precisely, we say that the environment is partially periodic in $y$ and depends seasonally on time if :

1) $\forall i\in\{2,...,N\}$, there exist the constants $L_2,...,L_{N}$ such that
\begin{equation}
f(t,x+L_ie_{i},s)=f(t,x,s)\quad\textrm{$\forall t\in\R, s\in\R, x\in\R^N$}\nonumber
\end{equation}
where $\{e_1,...e_N\}$ denotes the unit normal orthogonal basis of $\R^N$.

2) There exists $T>0$, such that 
$$f(t+T,x,s)=f(t,x,s)\quad\textrm{$\forall t\in\R, s\in\R, x\in\R^N$}.$$

We assume in addition that $f(t,x,0)=0$, $f$ is $C^1$ with respect to $s$, $f$ and $f_s$ are Holder-continuous with respect to $t$ and $x$, precisely   
$$\forall s>0,\quad f(\cdot,\cdot,s), f_s(\cdot,\cdot,0)\in C^{\frac{\alpha}{2},\alpha}_{t,x}(\R\times\R^N),$$
where $C^{\frac{\alpha}{2},\alpha}_{t,x}(\mathcal{I}\times\mathcal{H})$, $\mathcal{I}\subset\R$, $\mathcal{H}\subset\R^N$ denotes the space of functions $\phi(t,x)$ such that $\phi(\cdot,x)\in C^{\frac{\alpha}{2}}(\mathcal{I})$ and $\phi(t,\cdot)\in C^{\alpha}(\mathcal{H})$ uniformly with respect to $t$ and $x$  respectively.

We are interested in  pulsating fronts of (\ref{8}), namely  the solutions of the form $u(t,x)=U(t,x_1-ct,y)>0$. They are obtained from the equation
\begin{equation}
\left\{\begin{array}{ll}
  U_t=\Delta U+c\partial_1 U+f(t,x,U)\quad\quad\quad \textrm{$t\in\R,x\in\R^N$}\\
\textrm{$U$ is bounded.}\label{12}
\end{array} \right.
\end{equation}
Note that $U(t,x_1,y)$ is not a priori assumed  to be periodic in $t$ nor in $y$.

Here, we require the  principal eigenvalue of the linearized operator of Eq. (\ref{12}). More generally, we consider the operators of the form :
\begin{equation}
\mathcal{L}u=\partial_tu- a_{ij}(t,x)\partial_{ij}u(t,x)-b_i(t,x)u_i(t,x)-c(t,x)u(t,x),\quad\quad x=(x_1,y)\in\R^N,
\end{equation}
where $a_{ij},b_i,c_i$ are $T$-periodic in $t$ and  periodic in $y$ with the same period. To define the generalized principal eigenvalue of $\mathcal{L}$, we  assume that the coefficients satisfy the regularity condition as mentioned above and the matrix $(a_{ij}(t,x))$ is uniformly elliptic, namely $a_{ij},b_i,c_i\in C^{\frac{\alpha}{2},\alpha}_{t,x}(\R\times\R^N)$ and there exist some positive constants $E_1, E_2$ such that for all $\xi\in\R^{N}$ and $(t,x)\in\R\times\R^N$ such that
$$E_1|\xi|^2\leq \sum_{1\leq i\leq j \leq N}a_{ij}(t,x)\xi_i\xi_j\leq E_2|\xi|^2.$$

\begin{definition}Let $\mathcal{O}\subset\R$ and $\mathcal{Q}=\{(t,x)=(t,x_1,y)\in\R\times\mathcal{O}\times\R^{N-1}\}$, the generalized principal eigenvalue of $\mathcal{L}$ in $\mathcal{Q}$ is defined by:
\begin{equation}
\begin{array}{cc}
\tilde{\lambda}_1(\mathcal{L},\mathcal{Q})=\sup\left\lbrace  \lambda\in\R: \textrm{$\exists$ $\phi>0$, $\phi\in C^{1,2}_{t,x}(\mathcal{Q} )$, $\phi$ is T-periodic in $t$ and  periodic}\right. \\ \left. \textrm{ in $y$ such that $(\mathcal{L}-\lambda)\phi\geq0$ in $\mathcal{Q}$}\right\rbrace. \label{13}
\end{array}
\end{equation}
\end{definition}
By assuming in addition that $a_{ij},b_i,c_i\in L^\infty(\R^{N+1})$, one can take $\lambda=-\sup_{\R\times\R^N}f_s(t,x,0)$ and $1$ as a test function to see that $\tilde{\lambda}_1$ is well-defined and $-\sup_{\R\times\R^N}f_s(t,x,0)\leq \tilde{\lambda}_1$. We point out that this definition does not make sense if we do not require that the test functions to be periodic in $t$. Indeed, since $(\mathcal{L-\lambda})(\phi e^{\alpha t})=(\mathcal{L}+\alpha-\lambda)(\phi e^{\alpha t})$, $\forall \alpha\in\R$, if we do not force the periodicity in $t$, it would yield $\tilde{\lambda}_1=\tilde{\lambda}_1+\alpha$ for all $\alpha$. 
This kind of eigenvalue seems analogous to the ones introduced by Berestycki and Rossi  \cite{BR1} and Nadin  \cite{NA1}. However, the difference is that here we force the test functions to be periodic in $t$ and $y$ but not in $x_1$ while in \cite{BR1}, the test functions are not periodic in any direction of $x=(x_1,y)$ and in \cite{NA1}, the test functions must be periodic in both $t$ and $x=(x_1,y)$.

We further need the two following conditions that are similar to (\ref{4}),(\ref{5}), but    take into account the time-periodic dependence of $f$ :\\
\begin{equation}
\textrm{$\exists S>0$ such that $f(t,x,s)\leq 0$ for $s\geq S$,  $\forall t\in\R, x\in \R^N$,}\label{9}
\end{equation}
\begin{eqnarray}
\begin{array}{cc}
&\textrm{$s\rightarrow f(t,x,s)/s $ is nonincreasing and for all $ t_0\in[0,T)$ there exist $D\subset(-\infty,t_0)\times\R^N,$}\\
&\textrm{$|D|>0$ such that it is strictly decreasing in $D$.}\label{10}
\end{array}
\end{eqnarray}

Suppose that the parabolic operator $\mathcal{\widetilde{L}}$ is defined as follow
$$\mathcal{\widetilde{L}}\phi=\partial_t\phi- a_{ij}(t,y)\partial_{ij}\phi(t,y)-b_i(t,y)\phi_i(t,y)-c(t,y)\phi(t,y),\quad\quad (t,y)\in\R\times\R^{N-1},$$
with  $a_{ij}(t,y),b_i(t,y),c_i(t,y)\in L^\infty(\R\times\R^{N-1})$ and the matrix $a_{ij}(t,y)$ satisfies the uniform elliptic condition. We consider the eigenvalue problem

\begin{equation}
\left\lbrace\begin{array}{ll}
\widetilde{\mathcal{L}}\varphi=\lambda\varphi\\
\varphi>0\\
\varphi(.,.+T)=\varphi\\
\varphi(.+L_ie_i,.)=\varphi.
\end{array}\right.\label{10.1}
\end{equation}
It is proved by Nadin, Theorems 2.7  \cite{NA1} that there exist a unique eigenpair $(\lambda,\varphi)$, where $\varphi$ is unique up to a multiplicative constant, satisfying (\ref{10.1}). Here, we denote $\lambda(\mathcal{\widetilde{L}},\R\times\R^{N-1})$ by $\lambda$ for sake of simplicity. 

Using this notion, we assume that there exists a  function $\gamma(t,y)\in L^{\infty}(\R\times\R^{N-1})$, which is periodic in $y$ and T-periodic in $t$ such that 
\begin{equation}
\gamma(t,y)=\mathop {\limsup }\limits_{|x_1|  \to \infty }f_s(t,x_1,y,0)\quad\quad\textrm{and}\quad\quad\lambda=\lambda(\partial_t-\Delta_y-\gamma(t,y),\R\times\R^{N-1})>0.\label{11}
\end{equation}
Condition (\ref{11}) yields the characterization of the environment  expressing that it is globally unfavorable at infinity.  The new difficulties of this problem arise since we deal with the solution in the unbounded domain (whole space) without a-priori assuming  that the solutions are periodic in $y$ nor in $t$. Moreover, the monotonicity in time of solutions of parabolic operators starting by a stationary sub (or super) solution  no longer holds. 

Let us call \begin{equation}\label{12.1}
\mathcal{P}\varphi=\partial_t\varphi-\Delta\varphi-c\partial_1\varphi-f_s(t,x,0)\varphi
\end{equation}
the linearized operator associated with  (\ref{12}). In the sequel, we will briefly denote by $\tilde{\lambda}_1=\tilde{\lambda}_1(\mathcal{P},\R\times\R^{N+1})$. We are now able to state the results of this section.

\begin{theorem}\label{thm:T4}
Assume that $(\ref{9})-(\ref{11})$ hold, then there exists a positive pulsating front $U\in C^{1,2}(\R\times\R^N)$ of  (\ref{12}) if and only if $\tilde{\lambda}_1<0$. If it exists, it is unique, $T$-periodic in $t$, periodic in $y$ and decays exponentially in $|x_1|$, uniformly in $y$ and $t$.
\end{theorem}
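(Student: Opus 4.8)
\emph{Overall strategy.} The plan is to run the classical KPP-type programme --- principal-eigenvalue testing, sub/supersolutions, sliding --- the genuinely new features being the absence of compactness in $x_1$, the fact that a pulsating front is a priori neither periodic in $y$ nor $T$-periodic in $t$, and the loss of time-monotonicity noted in the text; all three are controlled by condition (\ref{11}), which makes the linearized operator uniformly unfavourable as $|x_1|\to\infty$. First I would prove, as a preliminary, that \emph{every} positive bounded solution $U$ of (\ref{12}) decays exponentially in $|x_1|$, uniformly in $(t,y)$. By (\ref{10}), $f(t,x,U)\le f_s(t,x,0)U$, hence $\mathcal{P}U\le0$; and since, for each $\e_0>0$, $f_s(t,x_1,y,0)\le\gamma(t,y)+\e_0$ once $|x_1|\ge R(\e_0)$, $U$ is on $\{|x_1|>R\}$ a subsolution of $\partial_t-\Delta-c\partial_1-(\gamma(t,y)+\e_0)$. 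With $\psi>0$ the $T$-periodic in $t$, periodic in $y$ eigenfunction attached to $\lambda$ in (\ref{11}), the barrier $\bar w:=Ce^{-\beta(|x_1|-R)}\psi(t,y)$ is a positive supersolution of that operator for $\beta>0$ small and $\e_0<\lambda/2$ (the condition reducing to $\lambda-\beta^2\mp c\beta-\e_0>0$), while $\psi$, read as independent of $x_1$, is a positive bounded supersolution bounded away from $0$; factoring $U-\bar w=\psi\,\zeta$ turns the inequality for $\zeta$ into one for an operator with strictly positive zeroth-order term $\lambda-\e_0$, for which the maximum principle holds in the class of bounded functions. Choosing $C$ with $\bar w\ge U$ on $\{|x_1|=R\}$ yields the decay, and the same scheme applies to the adjoint operator. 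Moreover, since $\lambda>0$ the ``value at infinity'' of $\mathcal{P}$ equals $\lambda+c^2/4>0$, so whenever $\tilde{\lambda}_1$ lies strictly below it --- in particular whenever $\tilde{\lambda}_1<0$ --- $\mathcal{P}$, and likewise $\mathcal{P}^*$, admits a genuine principal eigenfunction $\Phi>0$ that is $T$-periodic in $t$, periodic in $y$, and exponentially decaying in $|x_1|$, together with the sharp decay exponents; here I would quote the spectral theory of \cite{BR3} (and \cite{NA1}, \cite{BN}).

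\emph{Existence when $\tilde{\lambda}_1<0$.} For small $\e>0$, $\e\Phi$ is a strict $T$-periodic subsolution of (\ref{12}), because $\partial_t(\e\Phi)-\Delta(\e\Phi)-c\partial_1(\e\Phi)-f(t,x,\e\Phi)=\e\tilde{\lambda}_1\Phi+\e\Phi\big(f_s(t,x,0)-f(t,x,\e\Phi)/(\e\Phi)\big)\le(\tilde{\lambda}_1/2)\e\Phi<0$ once $\e\|\Phi\|_\infty$ is small enough, by the uniform $C^1$-behaviour of $f$ in $s$ near $0$; the constant $S$ of (\ref{9}) is a supersolution, and $\e\Phi\le S$. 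As time-monotonicity is unavailable, in place of a monotone semiflow I would iterate the time-$T$ Poincar\'e map $P$ of (\ref{12}): $P$ is order preserving by parabolic comparison, $P(\e\Phi|_{t=0})\ge\e\Phi|_{t=0}$ (the subsolution lies below the solution starting from it, and $\Phi$ is $T$-periodic) and $P(S)\le S$; hence $P^n(\e\Phi|_{t=0})\nearrow\underline U_0$, $P^n(S)\searrow\overline U_0$, and parabolic estimates give $C^{1,2}_{\loc}$-convergence of the attached trajectories to $T$-periodic solutions $\underline U\le\overline U$ of (\ref{12}). Each is positive, bounded, $C^{1,2}$ by parabolic regularity, periodic in $y$ by uniqueness of the Cauchy problem and the $y$-periodicity of $f$, and exponentially decaying by the preliminary step.

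\emph{Uniqueness (hence all stated properties), and nonexistence.} Given any positive bounded solution $U$, the sharp exponential decay makes $U$ and $\underline U$ comparable; a sliding argument --- set $\kappa^*:=\sup\{\kappa>0:\kappa\underline U\le U\}$, observe that $z:=U-\kappa^*\underline U$ is a nonnegative supersolution of a linear parabolic operator coming from the concavity (\ref{10}), not identically zero by the strict part of (\ref{10}), hence $z>0$ by the strong maximum principle, which together with the matching decay lets $\kappa^*$ be enlarged if $\kappa^*<1$, contradicting its maximality --- gives $U\ge\underline U$, and symmetrically $U\le\overline U$, so (applying this to $U=\underline U,\overline U$) $\underline U=\overline U$. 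Thus, when $\tilde{\lambda}_1<0$, (\ref{12}) has a unique positive bounded solution, which is $T$-periodic in $t$, periodic in $y$, $C^{1,2}$ and exponentially decaying --- exactly the assertions of \thm{T4} in the existence regime. For nonexistence, assume $\tilde{\lambda}_1\ge0$ and let $U>0$ be a bounded solution; then $\tilde{\lambda}_1<\lambda+c^2/4$, so the adjoint principal eigenfunction $\Phi^*>0$ ($T$-periodic in $t$, periodic in $y$, exponentially decaying) is at hand (the single value $\tilde{\lambda}_1=\lambda+c^2/4$, necessarily positive, is treated the same way with an eigenfunction at a slightly smaller eigenvalue). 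Setting $I(t):=\int\!\!\int U(t,x)\Phi^*(t,x)\,dx$ (inner integration over one $y$-period and over $x_1\in\R$), which is positive and bounded, differentiation and integration by parts in $x$ --- the boundary terms vanishing by $y$-periodicity and by the exponential decay of $\Phi^*$ against the parabolic bounds on $U$ --- give $I'(t)=-\tilde{\lambda}_1 I(t)-\int\!\!\int\big(f_s(t,x,0)U-f(t,x,U)\big)\Phi^*\,dx$, with the last integral $\ge0$ and, by the strict part of (\ref{10}), positive over each period. Hence $e^{\tilde{\lambda}_1 t}I(t)$ is nonincreasing; if $\tilde{\lambda}_1>0$, letting $t\to-\infty$ and using boundedness of $I$ forces $e^{\tilde{\lambda}_1 t}I(t)\le0$, impossible, while if $\tilde{\lambda}_1=0$ the strictly positive per-period defects make $I(nT)\to-\infty$, again impossible; this rules out a pulsating front and completes the equivalence.

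\emph{Main obstacle.} The crux is the preliminary step and the spectral theory behind it: the uniform-in-$(t,y)$ exponential decay on the non-compact cylinder $\R\times\R^N$ and the existence of genuine $T$-periodic, periodic-in-$y$, exponentially decaying principal and adjoint eigenfunctions for $\mathcal{P}$ --- with their sharp exponents, which is what makes the sliding step close --- rest squarely on condition (\ref{11}) and on the machinery of \cite{BR3}. The secondary difficulties, both anticipated in the text, are (i) the loss of time-monotonicity, forcing one to iterate the time-$T$ map and pass to the limit by parabolic estimates rather than invoke a monotone dynamical system, and (ii) the borderline $\tilde{\lambda}_1=0$ in the nonexistence part: there the per-period defects need not a priori stay bounded away from $0$ unless one excludes, by a compactness/time-translation comparison against the supersolution $M\Phi_0$ of (\ref{12}), the possibility that $U(\cdot+kT,\cdot)\to0$ --- precisely the point where the non-compactness of the domain and the lack of periodicity of $U$ must be met head-on.
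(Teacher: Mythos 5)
Your preliminary decay estimate is essentially Theorem \ref{thm:A2} of the appendix, and your existence step (subsolution built from a principal eigenfunction, iteration of the time-$T$ Poincar\'e map, monotone limits from $\e\Phi$ and from $S$) is the argument the paper actually uses, except that the paper takes the compactly supported truncation of the Dirichlet eigenfunction $\chi_r$ on $\mathcal{O}_r$ with $\lambda_p(r)<0$ rather than a global eigenfunction $\Phi$, which sidesteps any question about the decay or boundedness of $\Phi$. The genuine gap is in your uniqueness step: you initialize the sliding with $\kappa^*=\sup\{\kappa>0:\kappa\underline U\le U\}$ and assert that ``the sharp exponential decay makes $U$ and $\underline U$ comparable.'' For that set to be nonempty you need a \emph{lower} bound on the arbitrary solution $U$ with the same exponential rate as the \emph{upper} bound on $\underline U$; but the only decay estimate available (your barrier argument, i.e.\ Theorem \ref{thm:A2}) is one-sided, and no parabolic analogue of the two-sided estimate of Theorem \ref{thm:A1} is established. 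If $U$ could decay faster than $\underline U$ at one end, no $\kappa>0$ works and $\kappa^*$ is undefined. The paper avoids this entirely by sliding against the cushioned quantity $\underline U-\e\varphi$, with $\varphi$ the $(t,y)$-periodic eigenfunction of \eq{11} bounded away from $0$: since $\underline U\to0$ uniformly as $|x_1|\to\infty$, the set $\{k:k\overline U\ge\underline U-\e\varphi\}$ is nonempty using only the upper decay bound, the touching point is then either interior (handled by the strong maximum principle and \eq{10}) or escapes to $|x_1|=\infty$ (handled by \eq{11} and the extra term $\lambda\e\varphi/2$), and one concludes $k^*\le1$ before letting $\e\to0$. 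You need this cushion, or else a full two-sided sharp decay estimate, to close the sliding.

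Your nonexistence argument is also genuinely different from the paper's and has its own unresolved points. The integral identity $I'(t)=-\tilde{\lambda}_1I(t)-\int(f_s(t,x,0)U-f(t,x,U))\Phi^*$ is formally correct, but it presupposes an adjoint principal eigenfunction $\Phi^*>0$ with eigenvalue exactly $\tilde{\lambda}_1$ that is integrable in $x_1$ uniformly in $t$ --- none of which is provided by Proposition \ref{pro:p2} --- and, as you yourself concede, the borderline case $\tilde{\lambda}_1=0$ requires the per-period defects to stay bounded away from zero, which is not proved. The paper's route is much lighter: with $\tilde{\lambda}_1\ge0$, the condition \eq{10} makes the principal eigenfunction $\chi$ a supersolution of the nonlinear equation; normalizing $\chi(0,0)<U(0,0)$ and running the \emph{same} cushioned sliding comparison (with $\chi$ in the role of $\overline U$ and the exponential decay of $U$ supplied by Theorem \ref{thm:A2}) yields $U\le\chi$ everywhere, contradicting the normalization, with no case distinction at $\tilde{\lambda}_1=0$ and no adjoint theory. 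I would recommend replacing both your sliding initialization and your nonexistence argument accordingly.
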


One of the interesting points of this theorem is the loss of compactness since $f$ is not periodic in $x_1$ and the solution is not a priori assumed to be periodic in $y$ nor in $t$. We will prove that the uniqueness of (\ref{12}) still holds in this larger class of solution, that is the class of nonnegative bounded solutions. As pointed out in section 1.5 of \cite{NA2}, one cannot expect to show a general uniqueness in the class of nonnegative bounded solutions, even when the coefficients of  (\ref{12}) are periodic in $x=(x_1,y)$ and in $t$  under condition $\tilde{\lambda}_1<0$ only. Some extra assumptions are needed. Here the uniqueness holds due to  assumption (\ref{11}), which is a key ingredient to derive the exponential behavior of solutions of (\ref{12}) as $x_1\to\pm\infty$.

\begin{theorem}\label{thm:T5}
Let $u(t,x)$ be the solution of (\ref{8}) with nonnegative initial datum $u_0(x)\in L^\infty(\R^N)$ and not identically equal to $0$. Assume that (\ref{9})-(\ref{11}) hold.

i) If $\tilde{\lambda}_1\geq0$ then
$$\lim_{t\to\infty} u(t,x)=0,$$
uniformly in $x\in\R^N$.

ii) if  $\tilde{\lambda}_1<0$ then
$$\lim_{t\to\infty}(u(t,x_1,y)-U(t,x_1-ct,y))=0,$$
where $U$ is the unique solution of $(\ref{12})$, uniformly in $x_1$, locally uniformly in $y$. If, in addition, $u_0$ is periodic in $y$ or satisfies
\begin{equation}\label{T5.0}
\forall r>0,\quad\quad\inf_{|x_1|<r,y\in\R^{N-1}}u_0(x_1,y)>0,
\end{equation}
then above convergence is uniform also in $y$. 
\end{theorem}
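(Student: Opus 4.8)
The plan is to follow the by-now classical KPP strategy — squeeze the solution $u(t,x)$ between sub- and supersolutions that relax to the relevant steady object as $t\to\infty$ — while being careful about the two features that distinguish this setting from the periodic one: the loss of compactness in $x_1$ and the absence of any a priori periodicity of $u$. Throughout I would work in the moving frame $\xi=x_1-ct$, so that $v(t,\xi,y):=u(t,\xi+ct,y)$ solves a parabolic equation with reaction $f(t,\xi,y,v)$ and a drift term $c\partial_1 v$, and a pulsating front is precisely a bounded positive solution of \eqref{12}. Part (i): when $\tilde\lambda_1\geq 0$, by Proposition-type considerations on the eigenvalue \eqref{13}, there is a $T$-periodic-in-$t$, periodic-in-$y$ positive test function $\phi$ with $\mathcal{P}\phi\geq\tilde\lambda_1\phi\geq 0$ (up to an arbitrarily small shift of $\tilde\lambda_1$ one may take strict sign, or add $\varepsilon$), so $M e^{-\tilde\lambda_1 t/?}\phi$ — more precisely $A\phi(t,\xi,y)$ times a decaying exponential when $\tilde\lambda_1>0$, and a slowly decaying barrier obtained from $\tilde\lambda_1=0$ by perturbing to $\gamma$ on the unfavorable region via \eqref{11} — is a supersolution dominating $v(0,\cdot,\cdot)$ for $A$ large, using \eqref{9} to handle the region $v\geq S$. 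Here condition \eqref{11} is what confines the mass: outside a large slab $|\xi|<R$ the growth rate is essentially $\gamma$ and $\lambda(\partial_t-\Delta_y-\gamma,\R\times\R^{N-1})>0$ forces exponential decay in $|\xi|$ of any supersolution, so the $\limsup$ barrier really goes to $0$ uniformly. The comparison principle (which one must first establish on the unbounded domain — see below) then gives $u(t,\cdot)\to 0$ uniformly.

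For part (ii), $\tilde\lambda_1<0$, Theorem \ref{thm:T4} already supplies the unique pulsating front $U$, which is $T$-periodic in $t$, periodic in $y$, and exponentially decaying in $|x_1|$. The upper bound is easy: for $\delta>0$, $(1+\delta)U$ is a supersolution of \eqref{12} by the concavity assumption \eqref{10} ($f(t,x,s)/s$ nonincreasing), and since $u_0$ is bounded while $U$ has a uniform positive lower bound on any slab $|\xi|<r$ — wait, it does not have a global positive lower bound, it decays — so instead I would first use part (i)-type barriers to confine $u(t,\cdot)$ to a slab up to exponentially small error, then inside the slab dominate $u_0$ by $C\,U$ and propagate $C\geq 1+\delta$ down to $1+\delta$ by the standard relaxation argument (building a supersolution $\bar u(t)=(1+\delta(t))U$ with $\delta(t)\downarrow 0$, using strict decrease of $f(t,x,s)/s$ on the set $D$ from \eqref{10}). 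The lower bound is where the hypotheses on $u_0$ enter. If $u_0$ is periodic in $y$, then $u$ stays periodic in $y$ for all $t$, so the problem effectively lives on the compact cross-section and one can build a subsolution of the form $\eta\,\psi(t,y)\,\chi(\xi)$ from the principal eigenfunction of the cross-sectional operator together with the fact that $\tilde\lambda_1<0$ makes the slab "supercritical"; standard KPP then pushes $\eta$ up to $1-\delta$. If instead $u_0$ satisfies \eqref{T5.0}, I would use a parabolic Harnack / Krylov–Safonov argument to show that from a bounded-below bump at $t=0$ on a slab, $u(t,\cdot)$ becomes bounded below on a slightly larger slab at some later time, bootstrap this into a genuine compactly supported subsolution that grows, and again relax. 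Combining the matching upper and lower barriers and invoking uniqueness of $U$ yields $u(t,\xi+ct,y)-U(t,\xi,y)\to 0$, uniformly in $\xi$ and locally uniformly in $y$ in general, and uniformly in $y$ under the extra hypothesis.

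The main obstacle, I expect, is the comparison principle on the unbounded domain $\R\times\R^N$ without any periodicity of $u$: the usual maximum principle needs either a bounded domain, a sign condition at infinity, or a suitable barrier, and here $f_s(t,x_1,y,0)$ is sign-changing for large $|x_1|$. The resolution is exactly condition \eqref{11}: outside a large slab the linearized operator is dominated by $\partial_t-\Delta-c\partial_1-\gamma(t,y)$ whose generalized principal eigenvalue (after the cross-sectional reduction \eqref{10.1}) is positive, so one has a global positive supersolution decaying in $|x_1|$, which is precisely what the generalized-eigenvalue machinery of \cite{BR3},\cite{NA1} needs to run a weak/strong maximum principle on the whole space. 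Once this comparison principle and the confinement-to-a-slab estimate are in hand, the rest is the standard sub/supersolution relaxation; the second delicate point, turning the pointwise-positive but non-periodic datum \eqref{T5.0} into an effective subsolution, is handled by a Harnack-type spreading estimate rather than anything new.
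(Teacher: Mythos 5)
Your overall architecture (work in the moving frame, squeeze between barriers, confine the mass in $x_1$ via (\ref{11}), build a compactly supported slab subsolution for the lower bound, use Harnack for a datum satisfying (\ref{T5.0})) matches the paper's, and you correctly identify the two genuine difficulties. But two steps, as written, do not go through. First, in part (i) the barrier $Ae^{-\lambda t}\phi$ built from a test function of (\ref{13}) is problematic: the test functions in (\ref{13}) are periodic in $t$ and $y$ but \emph{not} in $x_1$, so $\phi$ need not be bounded away from $0$ as $|x_1|\to\infty$ and the initial ordering $A\phi\geq u_0$ can fail for every $A$; and when $\tilde{\lambda}_1=0$ no decaying exponential factor is available --- your suggestion to ``perturb to $\gamma$'' only controls the unfavorable region $|x_1|$ large, not the favorable slab, and you flag this yourself with a ``?''. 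The paper avoids both issues by taking the solution $w$ with constant initial datum $S'=\max\{S,\|u_0\|_{L^\infty}\}$, showing the discrete iterates $w(\cdot+nT,\cdot)$ are nonincreasing in $n$ and converge locally uniformly to a bounded nonnegative solution $W$ that is $T$-periodic in $t$ and periodic in $y$, and then invoking the nonexistence half of Theorem \ref{thm:T4} to get $W\equiv 0$ when $\tilde{\lambda}_1\geq0$; uniformity in $x_1$ is handled separately by the claim (\ref{T5.3}), proved by a translation--compactness argument against the supersolution $S'\varphi^0(t,y)e^{-\lambda(t-h)}$ with $h\to-\infty$, where $(\lambda,\varphi)$ is the eigenpair of (\ref{T4.0}).

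Second, your upper bound in part (ii) via a continuously relaxing supersolution $(1+\delta(t))U$ with $\delta(t)\downarrow0$ runs into exactly the obstruction the paper flags just before its proof: for time-periodic $f$, a solution launched from a stationary or $T$-periodic supersolution is \emph{not} monotone in $t$ --- only the sequence $n\mapsto w(t+nT,x)$ is monotone --- and strict decrease of $s\mapsto f(t,x,s)/s$ holds only on the set $D$ of (\ref{10}), so the pointwise relaxation of $\delta(t)$ to $0$ does not follow. The paper sidesteps this entirely: the decreasing iterates from $S'$ and the increasing iterates from the slab subsolution $k\chi_\rho$ converge to positive bounded solutions $W$ and $P$ that are $T$-periodic in $t$ and periodic in $y$, and the \emph{uniqueness} statement of Theorem \ref{thm:T4} forces $W=P=U$; no quantitative relaxation rate is needed. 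Your lower-bound construction (Dirichlet eigenfunction $\chi_\rho$ on a slab with $\lambda_p(\rho)<0$, fitted under $u_0$ via (\ref{T5.0}), or via $y$-periodicity together with the strong maximum principle at time $T$) is essentially the paper's. If you replace the part-(i) barrier and the $(1+\delta(t))U$ step by the monotone $T$-iterates combined with the existence/uniqueness/nonexistence trichotomy of Theorem \ref{thm:T4}, and keep your confinement claim as a separate lemma, the proof closes.
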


The fact that the convergence holds uniformly in $x_1$ is a consequence of the exponential decay as $x_1\to\pm\infty$, which can be derived from (\ref{11}).

\textbf{Organization of the paper.} We divide the rest of the paper into four sections.  Section 2 deals with  problem (\ref{0}) in the cylindrical domain $\O$ without time dependence.  Section 3  investigates   problem (\ref{8}) in partially periodic domain with  time dependence. In section 4,  we first study the concentration of the species in $\O$ of Section 2, when the exterior domain becomes extremely unfavorable and further prove the symmetry breaking of the fronts.  Finally, some auxiliary results are contained in the Appendix.

\section{The cylindrical environment without time dependence}

\subsection{An illustration}
Before proving the main results of this section, let us provide an illustration of how theorems  (\ref{thm:T1})-(\ref{thm:T3}) apply and why condition (\ref{6}) is useful to describe the heterogeneity of  habitat of a species facing a climate change.

We consider $\O=\R\times(0,1)$ and for $\alpha\in(0,1),L>0$ the family  $f_{\alpha,L}(x_1,y,s)=(\rho_L(x_1)+\mu_\alpha(y))s-s^2,$ where

\begin{equation}
\rho_L(x_1)=\left\{\begin{array}{ll}
2 & \textrm{on $[-L,L]$}\\
\theta & \textrm{outside $[-L,L]$},
\end{array} \right.\quad\quad \mu_\alpha(y)=\left\{\begin{array}{ll}
1 & \textrm{on $[0,\alpha]$}\\
-1 & \textrm{on $[\alpha,1]$}.
\end{array} \right.\nonumber
\end{equation}
These nonlinearities are discontinuous. However, $\partial_s f_{\alpha,L}(x_1,y,0)$ is well defined a.e and all our results apply for zero order coefficient in $L^\infty$ (see \cite{BR1},\cite{BR2} for further discussion of this extension). We see that, if $\theta\in(-1,1)$, $\partial_s f_{\alpha,L}(x_1,y,0)$ is sign-changing as all the way $|x_1|\to\infty$ and $\mu_\alpha(y)=\lim_{|x_1|\to\infty}\partial_s f_{\alpha,L}(x_1,y,0)$ as $\theta=0$. Now, for every $\alpha\in(0,1)$, let  $(\lambda_\alpha,\phi_\alpha)$ be the (unique) eigenpair of the Neumann eigenvalue problem
\begin{equation}\nonumber
\left\lbrace\begin{array}{ll}
-\phi_\alpha''-\mu_\alpha(y)\phi_\alpha=\lambda_\alpha\phi_\alpha &\textrm{in $(0,1)$}\\
\partial_\nu\phi_\alpha=0&\textrm{at $0$ and $1$.}
\end{array}\right.
\end{equation}
Dividing the equation by $\phi_\alpha$ and integrating by part, we get
$$-\int_0^1\frac{{\phi'}_\alpha^2}{\phi_\alpha^2}dy-\int_0^1\mu_\alpha(y)dy=\lambda_\alpha.$$
Hence, $\lambda_\alpha\leq-\int_0^1\mu_\alpha(y)dy=-(2\alpha-1)$. It is also known that $\lambda_\alpha$ is decreasing with respect to $\alpha$ and that $\alpha\mapsto\lambda_\alpha$ is continuous. Since $\lambda_0=1$ we see that there exists a unique $\ol\alpha$ such that $\lambda_{\ol\alpha}=0$ and $\lambda_\alpha<0$ iff $\alpha>\ol\alpha$.

For $\alpha>\ol\alpha$, $\lambda_\alpha<0$, it is well-known that there exists a unique  positive solution (Berestycki,\cite{B1}) of
\begin{equation}\nonumber
\left\lbrace\begin{array}{ll}
-p_\alpha''-\mu_\alpha(y)p_\alpha+p_\alpha^2=0 &\textrm{in $(0,1)$}\\
\partial_\nu p_\alpha=0&\textrm{at $0$ and $1$.}
\end{array}\right.
\end{equation}
In this case, the environment is globally favorable at infinity and we conjecture that there is  always persistence, namely as $t\to\infty$, $u(t,x_1,y)\to U(x_1-ct,y)$, $\forall(x_1,y)\in\O$, where $U(\xi,y)$ is the unique positive stationary solution of
\begin{equation}\nonumber
\left\lbrace\begin{array}{ll}
u_t=u''+c\partial_1 u+f_{\alpha,L}(\xi,y,u) &\textrm{in $\O$}\\
\partial_\nu u=0&\textrm{on $\partial\O$.}
\end{array}\right.
\end{equation}
If $\liminf_{|x_1|\to\infty}\partial_s f_{\alpha,L}(x_1,y,0)>c^2/4$ uniformly in $y$, this conjecture is true and  we refer to \cite{BR5} for  its proof. However, this is not our current interest.

For $\alpha<\ol\alpha$, the environment is globally unfavorable at infinity. The problem is more subtle and our theory applies in this case. Moreover, if  $\theta<-1$, the environment is completely unfavorable near infinity. For instance, we take $\theta=-2$ and  $c$ not too large, say $c=1$, we claim that there exists a unique threshold value $L^*$ such that the persistence holds iff $L>L^*$.

To prove this, let us denote $\mathcal{Q}_L[\phi]=\phi''+\phi'+\partial_s f_{\alpha,L}(x_1,y,0)\phi$, where $\partial_s f_{\alpha,L}(x_1,y,0)=\rho_L(x_1)+\mu_\alpha(y)$ defined on $\O$. Since $\O$ is unbounded, we cannot define the classical eigenvalue of $\mathcal{Q}_L$ on $\O$. We make use of the definition (\ref{2}). Let us call $\lambda_L=\lambda_N(-\mathcal{Q}_L,\O)$ and $\lambda'_L=\lambda_N(-\phi''-\partial_s f_{\alpha,L}(x_1,y,0)\phi,\O)$. By Proposition \ref{pro:p1}, one has
$$\lambda_L=\lambda_L'+\frac{1}{4}.$$
Since $\rho_L$ is increasing with respect to $L$,  $\lambda_L$ is decreasing with respect to $L$. Moreover, 
the map $L\mapsto\lambda_L$ is continuous on $[0,\infty]$. Indeed, for any $L\in[0,\infty]$, let $\{L_n\}\in[0,\infty]$ be an arbitrary sequence converging to $L$, we see that $\|\partial_s f_{\alpha,L_n}(x_1,y,0)-\partial_s f_{\alpha,L}(x_1,y,0)\|_{L^\infty(\O)}\to0$ as $n\to\infty$. Arguing as in the proof of Proposition 9.2  part (ii) \cite{BR3}, we get $\lim_{n\to\infty}\lambda_{L_n}=\lambda_L.$ It is worth noting that since $\O$ is a smooth domain with Neumann boundary condition, we can apply the  Harnack inequality up to the boundary (see Berestycki-Caffarelli-Nirenberg \cite{BCN}) as in  proof of Proposition 9.2  part (ii) \cite{BR3}. Moreover, since $\partial_s f_{\alpha,0}(x_1,y,0)=-2+\mu_\alpha(y)\leq-1$ and $\partial_s f_{\alpha,\infty}(x_1,y,0)=2+\mu_\alpha(y)\geq1$, by taking $1$ as a test-function, we have $\lambda_0=\lambda_0'+1/4\geq 5/4$ and $\lambda_\infty=\lambda_\infty'+1/4\leq -3/4$. The claim is proved.

As we will see in the next section, for $\alpha<\ol\alpha$, the equation
\begin{equation}\label{V1_ill1}
\left\lbrace\begin{array}{ll}
q_\alpha''+cq'+f_{\alpha,L}(x_1,y,q)=0 &\textrm{in $\O$}\\
\partial_\nu q_\alpha=0&\textrm{on $\partial\O$}
\end{array}\right.
\end{equation}
admits a unique positive solution if and only if $\lambda_L<0$. From this result, we can also prove that $u(t,x_1,y)$   converges as $t\to\infty$  to the unique positive solution $q(x_1,y)$ of (\ref{V1_ill1}) if $\lambda_L<0$ and $u(t,x_1,y)$ converges to zero if $\lambda_L\geq0$. Even if the persistence  is known, i.e $\lambda_L<0$, the  non-persistence and the uniqueness are still  delicate questions.

\subsection{The existence and uniqueness of the front}
To achieve the existence and uniqueness of Eq. (\ref{3}), the key property is the exponential decay of solution. This estimate is the object of the next section

\subsubsection{Exponential decay}

\begin{proposition}\label{lem0} Let $U$ be a nonnegative bounded solution of (\ref{3}). Assume that (\ref{5})-(\ref{6}) hold, then for all $0<\alpha<\alpha^*$ with $\alpha^*=\frac{-c+\sqrt{c^2+4\lambda_\mu}}{2}$, there exists a positive constant $C(\alpha)$  such that
$$U(x_1,y)+|\nabla U(x_1,y)|\leq C(\alpha)e^{-\alpha|x_1|}.$$ 
 \end{proposition}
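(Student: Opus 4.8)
The plan is to construct an exponentially decaying supersolution of the equation satisfied by $U$ for $|x_1|$ large, and then invoke a comparison principle together with the positivity of $\lambda_\mu$ to conclude $U(x_1,y)\le C e^{-\alpha|x_1|}$; the gradient bound then follows by interior elliptic estimates. First I would observe that, since $f(x,0)=0$ and $s\mapsto f(x,s)/s$ is nonincreasing by \eq{5}, one has $f(x,U)\le f_s(x,0)U$ wherever $U>0$, so $U$ is a subsolution of the linear equation $\Delta U+c\partial_1 U+f_s(x,0)U\ge 0$. By the definition \eq{6} of $\mu$, for any $\delta>0$ there is $R_\delta>0$ such that $f_s(x_1,y,0)\le\mu(y)+\delta$ for $|x_1|\ge R_\delta$. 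Hence on $\O\setminus\O_{R_\delta}$ we have the differential inequality $\Delta U+c\partial_1 U+(\mu(y)+\delta)U\ge 0$.

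Next I would build the supersolution. Let $\phi_\mu>0$ be the Neumann principal eigenfunction on $\omega$ associated with $\lambda_\mu=\lambda_N(-\Delta_y-\mu(y),\omega)>0$, i.e. $-\Delta_y\phi_\mu-\mu(y)\phi_\mu=\lambda_\mu\phi_\mu$ with $\partial_\nu\phi_\mu=0$; since $\omega$ is bounded and smooth, $\phi_\mu$ is bounded above and below by positive constants. For $\alpha>0$ and a constant $A>0$ to be chosen, set $\bar U(x_1,y)=A\,e^{-\alpha|x_1|}\phi_\mu(y)$ (with the corner at $x_1=0$ handled as usual, since $e^{-\alpha|x_1|}$ is a supersolution of $v''-\alpha^2 v=0$ across $x_1=0$ in the viscosity/distributional sense — its kink points the right way). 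A direct computation on $\{x_1>0\}$ gives
\[
\Delta\bar U+c\partial_1\bar U+(\mu(y)+\delta)\bar U
= A\,e^{-\alpha x_1}\big[(\alpha^2-c\alpha-\lambda_\mu+\delta)\phi_\mu\big],
\]
and symmetrically on $\{x_1<0\}$ with $-c\alpha$ replaced by $+c\alpha$, so the worst sign needs $\alpha^2+c\alpha-\lambda_\mu+\delta\le 0$ as well; choosing $\alpha<\alpha^*=\frac{-c+\sqrt{c^2+4\lambda_\mu}}{2}$ and then $\delta$ small enough makes $\alpha^2-c\alpha-\lambda_\mu+\delta\le\alpha^2+c\alpha-\lambda_\mu+\delta<0$, i.e. $\bar U$ is a strict supersolution on $\O\setminus\O_{R_\delta}$, and it satisfies the Neumann condition on $\partial\omega$ by construction. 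Since $U$ is bounded, pick $A$ large enough that $A e^{-\alpha R_\delta}\min_\omega\phi_\mu\ge \sup_\O U\ge U$ on the two cross-sections $\{|x_1|=R_\delta\}$; also $\bar U\to 0$ while $U\ge 0$ as $|x_1|\to\infty$.

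Then I would apply the comparison principle for the operator $\Delta+c\partial_1+(\mu(y)+\delta)$ on the unbounded domain $\O\setminus\O_{R_\delta}$: this operator has positive generalized principal eigenvalue there (it dominates $-\alpha^2+c\alpha+\lambda_\mu-\delta>0$ after the Liouville change of variables, or one argues directly from the existence of the positive supersolution $\bar U$), so the maximum principle holds, and $U\le\bar U$ on $\O\setminus\O_{R_\delta}$, which gives $U(x_1,y)\le C(\alpha)e^{-\alpha|x_1|}$ there; enlarging $C(\alpha)$ to cover the bounded piece $\O_{R_\delta}$ finishes the bound on $U$. Finally, writing the equation for $U$ on a unit ball around $(x_1,y)$ (using a boundary straightening near $\partial\O$ for the Neumann condition) and applying interior $W^{2,p}$ and Schauder estimates, $|\nabla U(x_1,y)|\le C\big(\|U\|_{L^\infty(B_1(x_1,y))}+\|f(\cdot,U)\|_{L^\infty}\big)\le C' e^{-\alpha(|x_1|-1)}$, which is absorbed into $C(\alpha)e^{-\alpha|x_1|}$ after adjusting the constant. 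The main obstacle is justifying the comparison principle / maximum principle on the unbounded domain $\O\setminus\O_{R_\delta}$ with only boundedness of $U$ assumed (no prescribed behavior at $x_1=\pm\infty$): this is exactly where $\lambda_\mu>0$ is essential, and I expect to invoke the maximum principle in unbounded domains admitting a positive bounded-from-below supersolution, in the spirit of Berestycki–Nirenberg–Varadhan, together with the decay $\bar U\to0$ to rule out a positive supremum of $U-\bar U$ escaping to infinity.
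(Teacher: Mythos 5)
Your argument is in substance the one in the paper: both use that (\ref{5})--(\ref{6}) make $U$ a subsolution of $\Delta+c\partial_1+\mu(y)+\delta$ outside a compact slab, and both compare with a barrier of separated form $e^{-\alpha|x_1|}\varphi(y)$ built from the Neumann eigenfunction of $-\Delta_y-\mu$ on $\omega$, with the constraint $\alpha^2+c\alpha-\lambda_\mu+\delta<0$ driving $\alpha\to\alpha^*$ as $\delta\to0$. The only real divergence is how the comparison is closed on the unbounded set $\O\setminus\O_R$. You invoke a maximum principle for bounded subsolutions in an unbounded domain and correctly flag this as the main obstacle (note that your remark that $\bar U\to0$ while $U\ge0$ at infinity points the wrong way: it is precisely because $U$ is only known to be bounded, not decaying, that the comparison is delicate). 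The paper sidesteps this entirely by working on the finite annuli $\O_{R+p}\setminus\O_R$ with a two-sided barrier $C_1e^{\alpha|x_1|}\varphi+C_2e^{-\alpha|x_1|}\varphi$, where $C_1=e^{(R+p)(\tau-\alpha)}$ with $\tau<\alpha$ is chosen so the barrier dominates $U$ on both cross-sections while $C_1\to0$ as $p\to\infty$; the classical maximum principle on a bounded domain (after dividing by $\varphi$ to make the zero-order coefficient negative) then suffices, and the growing mode disappears in the limit. Your route does work --- dividing $U-\bar U$ by $\varphi$ yields a bounded-above subsolution of an operator with zero-order coefficient at most $-(\lambda_\mu-\delta)<0$, for which the maximum principle in the cylinder follows by a standard translation--compactness argument --- but as written that step is asserted rather than proved, and it is exactly the step the truncation device is designed to make elementary. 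Two minor points: in the gradient estimate the source term must be localized, $\|f(\cdot,U)\|_{L^\infty(B_1(x))}\le \mathrm{Lip}(f)\,\|U\|_{L^\infty(B_1(x))}$ using $f(x,0)=0$, otherwise the exponential decay is lost; the paper instead combines the $L^p$ estimate with the Harnack inequality up to the boundary to get the cleaner bound $|\nabla U(x)|\le C\,U(x)$.
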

 
We point out that this proposition generalizes  Proposition 3 of \cite{BR2}. Moreover, our proof is quite simpler than the proof of Berestycki and Rossi in  that we do not use the Liouville transformation.

\begin{proof} 
Consider $\varphi\in W^{2,p}(\omega)$ the eigenfunction associated with $\lambda_\mu$  
\begin{equation}
\left\{\begin{array}{ll}
\Delta \varphi+\mu(y)\varphi+\lambda_\mu\varphi=0&\textrm{in $\omega$}\\
 \partial_\nu\varphi(y)=0 &\textrm{on $\partial\omega$}\label{T1.0.1}
\end{array}\right.
\end{equation}
where $\nu=\nu(y)$ is the outward unit normal on $\omega$. As is well-known by the Hopf lemma, $\inf_{\ol\omega}{\varphi}>0$. 

For any $\delta\in(0,\lambda_\mu)$, let  $\mathcal{L}w=\Delta w+c\partial_1w+(\mu(y)+\delta) w$ be defined in $\O$. Due to (\ref{5})-(\ref{6}), one has
$$\Delta U+c\partial_1 U+f_s(x,0)U\geq0 \quad\quad\textrm{in $\O$}.$$
and there exists $R=R(\delta)>0$ such that
$$f_s(x_1,y,0)\leq \mu(y)+\delta\quad\textrm{in $\O\setminus\O_R$},$$
therefore $\mathcal{L}U\geq0$ in $\O\setminus\O_R$. For any $p>0$, set
$$w_p(x_1,y)=e^{(R+p)(\tau-\alpha)}e^{\alpha|x_1|}\varphi(y)+e^{R(\tau+\alpha)}e^{-\alpha|x_1|}\varphi(y)=C_1e^{\alpha|x_1|}\varphi(y)+C_2e^{-\alpha|x_1|}\varphi(y),$$
where $R,\tau,\alpha>0$ will be chosen.  Direct computation shows that
\begin{eqnarray}
\frac{\mathcal{L}w_p}{\varphi}&=&C_1\left(\alpha^2+\frac{\Delta_y\varphi}{\varphi}+c\alpha\frac{x_1}{|x_1|}+\mu(y)+\delta\right) e^{\alpha|x_1|}+\nonumber\\& &\quad\quad\quad\quad\quad\quad\quad\quad\quad\quad\quad\quad\quad\quad\quad+C_2\left(\alpha^2+\frac{\Delta_y\varphi}{\varphi}-c\alpha\frac{x_1}{|x_1|}+\mu(y)+\delta\right) e^{-\alpha|x_1|}\nonumber\\
&\leq&(\alpha^2+c\alpha-\lambda_\mu+\delta)(C_1e^{\alpha|x_1|}+C_2e^{-\alpha|x_1|})\quad\quad\quad\textrm{in $\O\setminus\O_R$}\label{lem0.1}
\end{eqnarray}
For $w_p$ to be a supersolution of $\mathcal{L}$ in $\O\setminus\O_R$, it suffices to take $$\alpha=\alpha(\delta)=\dfrac{-c+\sqrt{c^2+4\lambda_\mu-4\delta}}{2}>0.$$
Clearly, $\alpha(\delta)$ is decreasing with respect to $\delta\in(0,\lambda_\mu)$. Choosing $\tau=\alpha/2$ and  $R$ large enough , we have the following estimates on $\partial (\O_{R+p}\setminus\O_R)$
$$\left\lbrace\begin{array}{ll}
w_p(x_1,y)\geq e^{R\tau}\varphi(y)\geq e^{R\tau}\inf_\omega\varphi\geq U(x_1,y)&\textrm{as $|x_1|=R,y\in\omega$}\\
w_p(x_1,y)\geq e^{(R+p)\tau}\varphi(y)\geq e^{(R+p)\tau}\inf_\omega\varphi\geq U(x_1,y)&\textrm{as $|x_1|=R+p,y\in\omega$.}
\end{array}\right.
$$
Fix $\alpha$, $\tau$ and $R$, we set $z(x_1,y)=\frac{w_p(x_1,y)-U(x_1,y)}{\varphi(y)}$. Obviously, $\mathcal{L} (z\varphi)=\mathcal{L} w_p-\mathcal{L} U\leq0$. Routine computation yields
\begin{eqnarray}
\mathcal{L}_1[z]=\frac{\mathcal{L}[z\varphi]}{\varphi}&=&\Delta z+2\dfrac{\nabla_y\varphi}{\varphi}\cdot\nabla_y z+c\partial_1z+\frac{\Delta\varphi+(\mu(y)+\delta)\varphi}{\varphi}z \leq0\nonumber
\end{eqnarray}
Observe that $z\geq 0$ when $|x_1|\in\{R,R+p\}$, $y\in\omega$ and $\partial_\nu z=0$ on $\partial\omega$. Moreover, the zero order coefficient of  $\mathcal{L}_1$ is negative. Hence, by the maximum principle, we get 
$$U(x_1,y)\leq w_p(x_1,y)=e^{-(R+p)\alpha/2}e^{\alpha|x_1|}\varphi(y)+e^{3R\alpha/2}e^{-\alpha|x_1|}\varphi(y)\quad\quad\textrm{in $\O_{R+p}\setminus \O_R$}.$$
Letting $p\to\infty$, we obtain
\begin{equation}
U(x_1,y)\leq e^{3R\frac{-c+\sqrt{c^2+4\lambda_\mu-4\delta}}{4}}e^{-\frac{-c+\sqrt{c^2+4\lambda_\mu-4\delta}}{2}|x_1|}\varphi(y)\quad\quad\textrm{in $\O\setminus\O_R$}.\nonumber
\end{equation}
Set  $\alpha^*=\frac{-c+\sqrt{c^2+4\lambda_\mu}}{2}$, one sees that when $\delta$ goes to $0$, $\alpha$ is arbitrarily close to $\alpha^*$.  Since $\varphi$ is bounded, obviously for all $0<\alpha<\alpha^*$ we can choose  $C(\alpha)$ (possibly changed if necessary) such that 
$$U(x_1,y)\leq C(\alpha)e^{-\alpha|x_1|}\quad\quad\textrm{in $\O$}.$$
This implies that $U(x_1,y)$ decays exponentially  as $|x_1|\to\infty$, uniformly in $y$. On the other hand, since $U$ is a solution of  (\ref{3}), we can use $L^p$ estimate for the Neumann problem with $p>N$ and Harnack inequality up to the boundary (see Berestycki-Caffarelli-Nirenberg \cite{BCN}) to derive
\begin{equation}
\|\nabla U\|_{L^\infty( B_1(x))}\leq C_1\|U\|_{W^{2,p}(B_1(x))}\leq C_2\|U\|_{L^\infty( B_2(x))}\leq C_3 U(x).\label{lem0.2}
\end{equation}
These inequalities end the proof.
\end{proof}
\begin{remark} \label{Remark1} Note that if $U$ is only a subsolution of (\ref{3}), we do not have the estimates in (\ref{lem0.2}) for  $|\nabla U|$. However, in the proof of Theorem \ref{thm:T1} below, we only require the decay of $U$ as $|x|\to\infty$, which is also true if $U$ is a subsolution of (\ref{3}).
\end{remark}

\subsubsection{Proof of Theorem \ref{thm:T1}}
Let us first  consider the case  $0\leq c< c^*$. By Proposition \ref{pro:p1}, we know that $$\tilde{\lambda}_1:=\lambda_N(-\Delta-c\partial_1-f_s(x_1,y,0),\O)<0.$$ Thanks to Proposition 1, \cite{BR2}, we have the limit $\mathop {\lim }\limits_{R  \to \infty }\lambda_R=\tilde{\lambda}_1<0$, where $\lambda_R$ is the unique eigenvalue of problem :
\begin{eqnarray}
\left\{\begin{array}{ll}
-\Delta\varphi_R-c\partial_1\varphi_R-f_s(x,0)\varphi_R=\lambda_R\varphi_R& x\in\O_R\\
\varphi_R(x)>0& x\in\O_R \\
\partial_\nu\varphi_R(x_1,y)=0 & |x_1|<R, y\in\partial\omega\\
\varphi_R(\pm R,y)=0& y\in\omega.
\end{array} \right.\nonumber
\end{eqnarray}
Moreover there exists an eigenfunction $\varphi_\infty\in W^{2,N}(\O)$ associated with $\tilde{\lambda}_1$. Fix $R>0$ large enough such that $\lambda_R<0$, we define $\phi(x)$ as following :
$$\phi(x)=\left\{\begin{array}{ll}
\varphi_R(x)& x\in\O_R\\
0&\textrm{otherwise}.
\end{array} \right.
$$
Since $f(x,s)$ is of class $C^1[0,s_0]$ with respect to $s$, for $\varepsilon>0$ small enough, we see that 
$$\Delta(\varepsilon\phi)+c\partial_1(\varepsilon\phi)+f(x_1,y,\varepsilon\phi)=\varepsilon\phi\left[-\lambda_R+\frac{f(x_1,y,\varepsilon\phi)}{\varepsilon\phi}-f_s(x_1,y,0)\right]>0.$$
Hence, $\varepsilon\phi$ is a subsolution of Eq. (\ref{3}). Since $\phi$ is compactly supported, we can choose $\varepsilon$  small such  that $\varepsilon\sup\phi\leq S$, where $S$ is a super solution of Eq. (\ref{3}) given by (\ref{4}). Therefore, by the classical iteration method, there exists a nonnegative solution $U$ satisfying $\varepsilon\phi\leq U\leq S$. Furthermore, thanks to the strong maximum principle, $U$ is  strictly positive .

The nonexistence and uniqueness are  direct consequences of the following comparison principle. Let  $U$ and $V$ be respectively a super-and sub-solution of (\ref{3}). We will now show  that $V(x)\leq U(x)$ in $\O$. Indeed, by  condition (\ref{6}), there exists an eigenpair $(\lambda_\mu,\varphi)$ of (\ref{T1.0.1}), where $\varphi$ satisfies $\inf_{\ol\omega}\varphi>0$ due to
the Hopf lemma. On the other hand, Proposition \ref{lem0} and Remark (\ref{Remark1}) imply that  $V$ decays exponentially  as $|x_1|\to\infty$, uniformly in $y$, therefore, for any $\varepsilon>0$, there exist $R(\varepsilon)>0$ such that $V(x_1,y)\leq \varepsilon\varphi(y)$ in $\O\setminus \O_{R(\varepsilon)}$. Then, the set
$$K_\varepsilon:=\{k>0:k U\geq V-\varepsilon\varphi \textrm{ in } \overline{\O}\},$$
 is nonempty. Let us call $k(\varepsilon):=\inf K_\varepsilon$. Obviously, the function $k(\varepsilon):\mathbb{R}^+\to\R$ is nonincreasing. Assume by a contradiction $$k^*=\mathop{\lim}\limits_{\varepsilon\to0^+}k(\varepsilon)>1.$$
Take $0<\varepsilon<\sup_\O V/\sup_\omega\varphi$, we have $k(\varepsilon)>0$, $k(\varepsilon)U-V+\varepsilon\varphi\geq0$. By the definition of $k(\varepsilon)$, there exists a sequence $(x^\varepsilon_{1,n},y^\varepsilon_{n})$ in $\overline{\O}$ such that
$$\left(k(\varepsilon)-\dfrac{1}{n}\right)U(x^\varepsilon_{1,n},y^\varepsilon_{n})<V(x^\varepsilon_{1,n},y^\varepsilon_{n})-\varepsilon\varphi(y^\varepsilon_{n}).$$
Fix $\varepsilon>0$, we have $(x^\varepsilon_{1,n},y^\varepsilon_n)\in\O_{R(\varepsilon)}$ for $n$ large enough, therefore $(x^\varepsilon_{1,n},y^\varepsilon_n)$  converges up to subsequence to some $(x_1(\varepsilon),y(\varepsilon))\in\O_{R(\varepsilon)}$. This limiting point must satisfy :
\begin{equation}
(k(\varepsilon)U-V+\varepsilon\varphi)(x_1(\varepsilon),y(\varepsilon))=0.\label{T1.1}
\end{equation}

Without loss of generality, we may assume $\mathop {\lim }\limits_{\varepsilon  \to 0^+}y(\varepsilon)=y_0\in\ol\omega$. The case that there exists $x_0$ such that $|x_0|=\mathop {\lim\inf }\limits_{\varepsilon  \to 0^+}|x_1(\varepsilon)|<\infty$ is ruled out. Indeed, from (\ref{T1.1}), $k^*<\infty$, the function $W=k^*U-V$ is nonnegative and vanishes at $(x_0,y_0)$. Since $f$ is Lipschitz continuous with respect to second variable and $k^*>1$, we have
$$-\Delta W-c\partial_1 W\geq k^*f(x,{U})-f(x,V)\geq f(x,k^*{U})-f(x,V)\geq z(x)W,$$
for some function $z(x)\in L^\infty_{loc}(\O)$. Thanks to  condition (\ref{5}), this inequality holds strictly in $D\subset\O$, with $|D|>0$. The strong maximum principle implies that $W$ cannot achieve a minimum value in the interior of $\O$. This means $y_0\in\partial\omega$, but the Hopf lemma yields another contradiction: $\partial_\nu W(x_0,y_0)<0$.

It remains to consider the case $\mathop {\lim}\limits_{\varepsilon  \to 0^+}|x_1(\varepsilon)|=\infty$. Set $W^\varepsilon=k(\varepsilon){U}-V+\varepsilon\varphi$, we have $W^\varepsilon\geq0$ and $W^\varepsilon$ vanishes at $(x_1(\varepsilon),y(\varepsilon))$. Thus there  exists  $r>0$ such that  $k(\varepsilon){U}<V$ in $B_r(x_1(\varepsilon),y(\varepsilon))\cap\O$. For $\varepsilon$ small enough, $k(\varepsilon)>1$, we derive from (\ref{5}) for $B_r(x_1(\varepsilon),y(\varepsilon))\cap\O$:
\begin{eqnarray}
(\Delta +c\partial_1) W^\varepsilon&\leq& f(x,V)-k(\varepsilon)f(x,U)-(\mu(y)+\lambda_\mu)\varepsilon\varphi\leq f(x,V)- f(x,k(\varepsilon){U)}-(\mu(y)+\lambda_\mu)\varepsilon\varphi\nonumber\\
&\leq&-\dfrac{f(x,k(\varepsilon){U})}{k(\varepsilon){U}}(k(\varepsilon){U}-V+\varepsilon\varphi)-\dfrac{\lambda_\mu}{2}\varepsilon\varphi-\left(\dfrac{\lambda_\mu}{2}+\mu(y)-\dfrac{f(x_1,y,k(\varepsilon){U})}{k(\varepsilon){U}}\right)\varepsilon\varphi.\nonumber\\ \label{T1.2}
\end{eqnarray}
Take $0<\varepsilon \ll 1$, then $|x_1(\varepsilon)|\gg1$, we have 
$$\dfrac{f(x_1,y,k(\varepsilon){U})}{k(\varepsilon){U}}<\mu(y)+\dfrac{\lambda_\mu}{2},\quad\quad\forall(x_1,y)\in B_r(x_1(\varepsilon),y(\varepsilon))\cap\O,$$
choosing $r$  smaller if necessary. Since $\lambda>0$, we get from (\ref{T1.2})
$$-\Delta W^\varepsilon-c\partial_1 W^\varepsilon-\varrho(x)W^\varepsilon> \dfrac{\lambda_\mu}{2}\varepsilon\varphi>0\quad\textrm{in $B_r(x_1(\varepsilon),y(\varepsilon))\cap\O$},$$
where $\varrho(x)=\frac{f(x,k(\varepsilon){U})}{k(\varepsilon){U}}$ is bounded. The strong maximum principle asserts  that $(x_1(\varepsilon),y(\varepsilon))$ cannot be an interior point of $\O$. Hence $(x_1(\varepsilon),y(\varepsilon))\in\partial\O$, but then the Hopf lemma  yields another contradiction $\partial_\nu W^\varepsilon(x_1(\varepsilon),y(\varepsilon))<0$.

We have proved that $k^*=\mathop{\lim}\limits_{\varepsilon\to0^+}k(\varepsilon)\leq 1$. Letting $\varepsilon\to0^+$, we derive
$$V\leq\mathop{\lim}\limits_{\varepsilon\to0^+}(k(\varepsilon)U+\varepsilon\varphi)\leq U \quad\quad\textrm{in $\O$}.$$
The uniqueness of Eq. (\ref{3}) is obviously achieved by exchanging the roles of $U$ and $V$. We end the proof by showing the nonexistence  when $c\geq c^*$. Assume by contradiction that  (\ref{3}) possesses a positive solution $U$ when  $c\geq c^*$. One has $\tilde{\lambda}_1=\lambda_N(-\Delta-c\partial_1-f_s(x_1,y,0),\O)\geq0$. Let $\varphi_\infty$ be a generalized principal eigenfunction with Neumann boundary condition associated with $\tilde{\lambda}_1$. Without loss of generality, we may assume that $0<\varphi_\infty(0)<U(0)$. We derive, from (\ref{5}), that
$$
-\Delta\varphi_\infty-c\partial_1\varphi_\infty=(f_s(x_1,y,0)+\tilde{\lambda}_1)\varphi_\infty\geq f(x_1,y,\varphi_\infty)
\quad\quad\textrm{in $\O$}
$$
By Proposition \ref{lem0},  $U$ decays exponentially as $|x_1|\to\infty$ uniformly in $y$. The above comparison principle implies that $U(x)\leq \varphi_\infty(x)$ for all $x\in\O$. This contradiction ends the proof.

\begin{remark} The globally unfavorable characterization of the environment near infinity plays the key role  in proving the uniqueness of (\ref{3}). Indeed,  if $f$ is homogeneous in the traveling direction, i.e  independent of $x_1$ and (\ref{3}) admits a positive solution $U(x_1,y)$, then for all $a\in\R$, $U(x_1+a,y)$ are also solutions of (\ref{3}) and thus the  uniqueness does not hold.
\end{remark}

\subsection{Long time dynamics}

In order to study the long time dynamics of Eq. (\ref{0}), we first prove the  following Liouville type theorem for entire solutions (solutions for all $t\in\R$). Consider the evolution problem in the cylindrical domain with Neumann boundary condition 
\begin{equation}
\left\{\begin{array}{ll}
\partial_tu^*=\Delta u^*+c\partial_1 u^*+f(x,u^*) &\textrm{$t\in\R$, $x\in\O$}\\
\partial_\nu u^*=0 &\textrm{$t\in\R$, $x\in\partial\O$,}\label{V1T2.1}
\end{array} \right.
\end{equation}
we have the following auxiliary result 
\begin{theorem}\label{thm:T'2}
Assume that $u^*$ is a nonnegative bounded  solution to (\ref{V1T2.1}) and conditions $(\ref{4})-(\ref{6})$ are satisfied, then   $u^*\equiv0$ if $c\geq c^*$, where $c^*$ is defined in Proposition \ref{pro:p1}. Conversely, if $c<c^*$ and there exist a sequence $(t_n)\in\R$ as $n\to\infty$ and a point $x_0\in\overline{\O}$ such that
\begin{equation}
\lim_{n\to\infty}t_n=+\infty,\quad\quad\quad\quad\liminf_{n\to\infty}u^*(-t_n,x_0)>0,\label{V1T2.2}
\end{equation}
then $u^*(t,x)\equiv U(x),$ where $U(x)$ is the unique solution of $(\ref{3}),$ given by Theorem \ref{thm:T1}.
\end{theorem}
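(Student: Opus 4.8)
The plan is to prove \thm{T'2} in two halves, treating the regimes $c\geq c^*$ and $c<c^*$ separately, and in each case reducing the parabolic (entire-in-time) problem to the elliptic comparison machinery already established for \eqref{3}.

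\textbf{The case $c\geq c^*$.} Here I would argue that a nonnegative bounded entire solution $u^*$ must be identically zero. The idea is to use the generalized principal eigenfunction $\varphi_\infty\in W^{2,N}_{\loc}(\O)$ of $-\Delta-c\partial_1-f_s(x,0)$ associated with $\tilde\lambda_1=\lambda_N(-\Delta-c\partial_1-f_s(x,0),\O)\geq 0$ (this is $\geq 0$ precisely when $c\geq c^*$ by Proposition \ref{pro:p1}) as a stationary supersolution. Indeed from \eqref{5}, $f(x,s)\leq f_s(x,0)s$, so $M\varphi_\infty$ satisfies $-\Delta(M\varphi_\infty)-c\partial_1(M\varphi_\infty)=(f_s(x,0)+\tilde\lambda_1)M\varphi_\infty\geq f(x,M\varphi_\infty)$ for any $M>0$; combined with $\partial_\nu\varphi_\infty\geq 0$ on $\partial\O$ it is a generalized stationary supersolution. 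Since $\varphi_\infty$ need not be bounded below, I would instead run a parabolic comparison on truncated cylinders $\O_R$: on $\O_R$ the restriction of $\varphi_\infty$ is a classical positive supersolution, and because $u^*$ is a \emph{bounded} entire solution while the infimum of $\varphi_\infty$ over $\partial\O_R$ controls nothing a priori, I would invoke the exponential decay of $u^*$ (at each fixed time) — but $u^*$ is not known to decay. The cleaner route: fix any time $t_0$, and show by the parabolic maximum principle on $(-\infty,t_0]\times\O$ that $u^*(t_0,\cdot)$ is itself a subsolution of \eqref{3} after passing to a limit along $t\to-\infty$; more directly, one shows $\limsup_{t\to\infty}\|u^*(t,\cdot)\|_\infty$ along the flow is $0$ using \thm{T2}(i) applied to the solution with initial datum $u^*(-n,\cdot)$, and then uses that $u^*$ is entire to conclude it was zero to begin with. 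This is the step I expect to require the most care: reconciling the non-decay of a general entire solution with the decay estimates, which forces one to work with finite cylinders and let $R\to\infty$.

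\textbf{The case $c<c^*$.} Now $\tilde\lambda_1<0$, so by \thm{T1} there is a unique solution $U$ of \eqref{3}. I would show that the entire solution $u^*$ satisfying \eqref{V1T2.2} is trapped between $U$ and $U$. For the upper bound $u^*\leq U$: since $U$ is a time-independent supersolution of the parabolic problem \eqref{V1T2.1}, and $u^*$ is entire and bounded, the plan is to compare $u^*(t,\cdot)$ with $U$ on each $\O_R$ for $t\in(-\infty,t_0]$, using that $U$ decays exponentially (Proposition \ref{lem0}) so that $u^*(t,x)\leq \|u^*\|_\infty\leq$ something dominated by $CU$ far out — actually the sharp tool is the comparison principle from the proof of \thm{T1}, which compares a bounded subsolution against a supersolution \emph{provided} the subsolution decays; here $u^*$ doesn't decay, so instead I would use that for fixed $t_0$, the function $x\mapsto u^*(t_0,x)$ is bounded and the parabolic flow from $u^*(-n,\cdot)$ converges by \thm{T2}(ii) to $U$, forcing $u^*(t_0,\cdot)=U$. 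For the lower bound $u^*\geq U$: hypothesis \eqref{V1T2.2} provides a point $x_0$ and times $-t_n\to-\infty$ with $u^*(-t_n,x_0)$ bounded below; by the strong parabolic maximum principle / Harnack, $u^*(-t_n,\cdot)$ is bounded below on every compact set uniformly in $n$, so one can slip a small subsolution $\varepsilon\phi$ (the compactly supported eigenfunction from the proof of \thm{T1}) beneath $u^*(-t_n,\cdot)$; the parabolic comparison then gives $u^*(t,\cdot)\geq$ the solution issued from $\varepsilon\phi$ at time $-t_n$, which converges to $U$ as $t_n\to\infty$. Combining, $u^*\equiv U$.

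\textbf{Main obstacle.} The genuine difficulty in both halves is that an entire bounded solution carries no a priori decay in $x_1$, whereas every comparison argument available (the elliptic one in \thm{T1}, the convergence in \thm{T2}) is built on the exponential decay of Proposition \ref{lem0}. The resolution I would pursue is to never compare $u^*$ directly with $U$ or with $\varphi_\infty$ on all of $\O$, but to exploit that $u^*$ \emph{is its own flow}: for each $n$, $u^*(t,x)$ for $t\geq -t_n$ equals the solution of \eqref{0} with initial datum $u^*(-t_n,\cdot)\in L^\infty(\O)$, which is nonnegative and (by Harnack applied at an interior point, using \eqref{V1T2.2} in the $c<c^*$ case, or trivially in the $c\geq c^*$ case) not identically zero; then \thm{T2} applies verbatim and letting $n\to\infty$ with $t$ fixed pins down $u^*(t,\cdot)$. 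The only remaining point to verify carefully is that in the $c<c^*$ regime the datum $u^*(-t_n,\cdot)$ is indeed not identically zero — this is exactly what \eqref{V1T2.2} together with the strong maximum principle guarantees — and that the convergence in \thm{T2}(ii), being uniform on $\O$, lets one pass to the limit $n\to\infty$ at fixed $t$.
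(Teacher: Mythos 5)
The decisive step of your proposal, in both regimes, is to view $u^*$ restricted to $t\geq -t_n$ as the solution of the Cauchy problem with datum $u^*(-t_n,\cdot)$ and then invoke Theorem \ref{thm:T2} to identify the limit. This is circular: in the paper's architecture Theorem \ref{thm:T2} is \emph{deduced from} Theorem \ref{thm:T'2} (the long-time limit of $\tilde u(t+t_n,\cdot)$ obtained by compactness is an entire solution, which is then identified precisely by the Liouville theorem), so Theorem \ref{thm:T'2} must be proved without it. Even granting Theorem \ref{thm:T2} independently, the diagonal step ``let $n\to\infty$ with $t$ fixed'' applies the convergence of Theorem \ref{thm:T2}(ii) along a sequence of initial data $u^*(-t_n,\cdot)$ that changes with $n$; that convergence is not uniform over initial data (data that are small somewhere take arbitrarily long to rise to $U$), so it cannot be passed to the limit without exactly the kind of uniform-in-$n$ lower bound $\varepsilon U\leq u^*(-t_n,\cdot)$ that constitutes the hard ``Claim'' in the paper's proof. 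Your first attempt for $c\geq c^*$ (comparison with $\varphi_\infty$) is abandoned for the reason you yourself identify, so that case is left with no non-circular argument at all.

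The non-circular route is a two-sided squeeze using only the elliptic Theorem \ref{thm:T1} and monotone flows from \emph{fixed} barriers. For the upper bound, let $v$ solve (\ref{V1T2.1}) with $v(0,\cdot)=S^*:=\max\{S,\|u^*\|_{L^\infty(\O)}\}$; by (\ref{4}) the constant $S^*$ is a stationary supersolution, so $v$ is nonincreasing in $t$ and converges to a stationary solution $V$, and comparing $u^*$ with $v(\cdot-h,\cdot)$ on $(h,\infty)\times\O$ and letting $h\to-\infty$ yields $u^*\leq V$ with no decay of $u^*$ required. When $c\geq c^*$, Theorem \ref{thm:T1} forces $V\equiv 0$ and the first assertion follows at once. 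For the lower bound when $c<c^*$, your Harnack argument is sound (modulo a fixed forward shift in time): it upgrades (\ref{V1T2.2}) to a uniform-in-$n$ lower bound on the compact support of the subsolution $\varepsilon\phi$, whence $\varepsilon\phi\leq u^*(-t_n+\tau,\cdot)$ for all large $n$, and the nondecreasing flow issued from the \emph{fixed} datum $\varepsilon\phi$ converges to the unique positive stationary solution, giving $u^*\geq U$; combined with $u^*\leq V$ and uniqueness, $u^*\equiv U$. This half of your proposal essentially coincides with the paper's (which establishes the uniform lower bound by a compactness--contradiction argument rather than Harnack); it is the upper bound and the case $c\geq c^*$ where your route genuinely breaks down.
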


\begin{proof}  Set $S^*=\max\{S,\|u^*\|_{L^\infty(\O)}\}$, where $S$ is the positive constant given in (\ref{4}), obviously $S^*$  is a super solution of stationary equation of Eq. (\ref{V1T2.1}). Let $v(t,x)$ be the solution of (\ref{V1T2.1}) starting by $v(0,x)=S^*$,  the maximum principle infers that $v$ is nonincreasing in $t$. Using standard parabolic estimates up to the boundary \cite{LI} and compact injection theorems, we see that $v$ converges locally uniformly in $\O$ to a stationary solution $V(x)$ of  (\ref{V1T2.1}). That $V(x)$ solves Eq. (\ref{3}). For any $h\in\R$, we define $v_h(t,x)=v(t-h,x)$. This function is a solution of (\ref{V1T2.1}) in $(h,+\infty)\times\O$ and satisfies $v_h(h,x)=S^*\geq u^*(h,x)$. The  parabolic comparison principle yields 
\begin{equation}
0\leq u^*(t,x)\leq\lim_{h\to-\infty}v_h(t,x)=V(x)\quad\quad\forall t\in\R,x\in\O.\label{V1T2.2.1}
\end{equation}
We consider separately two different cases.

Case 1. $c\geq c^*$. 

Theorem \ref{thm:T1} asserts that the stationary equation of Eq. (\ref{V1T2.1}) only has zero-solution. Namely, $V(x)\equiv0$ in $\O$. Therefore, the necessary condition for existence of nontrivial entire solution of (\ref{V1T2.1}) is $c< c^*$.

Case 2. $c< c^*$ and (\ref{V1T2.2}) holds.

 Theorem \ref{thm:T1} again asserts that the stationary  equation of Eq. (\ref{V1T2.1}) admits a unique positive solution $U$. We will prove that $u^*(t,x)\equiv U(x)$. Assume by contradiction that there exists  $x_0\in\O$ such that $u^*(t,x_0)\neq U(x_0)$. We will reach a contradiction by proving the following claim.\\

Claim. There exist $\varepsilon\in(0,1]$ and $n_0\in\N$ such that for $n\geq n_0$, one has  $\varepsilon U(x)\leq u^*(-t_n,x)$. \\

Assume for a moment that this claim holds true, the concluding argumentation goes as follows. Thanks to (\ref{5}), for any $\varepsilon\in(0,1]$, $\varepsilon U$ is a subsolution of stationary equation  of Eq. (\ref{V1T2.1}). Let  $w(t,x)$ be a solution of (\ref{V1T2.1}) with initial condition $w(0,x)=\varepsilon U(x)$ and $w_n(t,x)=w(t+t_n,x)$. We know, by the standard parabolic estimates, that as $t\to\infty$, $w_n(t,x)$ is nondecreasing, bounded from above by $S^*$ and converges locally uniformly in $\O$ to the unique stationary solution $W(x)$ of Eq. (\ref{V1T2.1}). The strict positivity of $W$ is derived from the condition $c<c^*$. By the way of setting, one has $w_n(-t_n,x)=\varepsilon U(x)\leq u^*(-t_n,x)$. The parabolic comparison principle implies that $w_n(t,x)\leq u^*(t,x)$ in $(-t_n,+\infty)\times\O$. Therefore, by letting $n\to\infty$, one has
$$u^*(t,x)\geq \lim_{n\to\infty} w_n(t,x)= W(x)\quad\quad\textrm{locally in $\R\times\O$}.$$
Combining this inequality with (\ref{V1T2.2.1}), we obtain $W(x)\leq u^*(t,x)\leq V(x)$, $\forall t\in\R$, $\forall x\in \O$. The uniqueness result of Theorem \ref{thm:T1} yields $u^*\equiv W\equiv V$.

It remains to prove the claim. Assume by contradiction that for all  $ \varepsilon\in(0,1]$ and for all $n_0\in\N$ there exist $n(\varepsilon)>n_0$  and  $x_{n(\varepsilon)}\in\O$ so that  $\varepsilon U(x(n_\varepsilon))> u^*(-t_{n(\varepsilon)},x_{n(\varepsilon)})$. Since $U$ is bounded, choosing a sequence $\varepsilon_k\to0$ as $k\to\infty$, by a diagonal extraction, one finds sequences $(t_k)\in\R^+$ and $(x_k)\in\O$ such that $t_k\to+\infty$ and $u^*(-t_k,x_k)\to0$ as $k\to\infty$.  We set $$\tilde{u}_k(t,x)=u^*(t+t_k,x+x_k).$$
Obviously, $\tilde{u}_k(t,x)$ is bounded from above by $S^*$ and satisfies the equation
\begin{equation}
\left\{\begin{array}{ll}
\partial_t\tilde{u}_k=\Delta \tilde{u}_k+c\partial_1 \tilde{u}_k+f(x+x_k,\tilde{u}_k) &\textrm{$t\in\R$, $x\in\O$,}\\
\partial_\nu \tilde{u}_k=0 &\textrm{$t\in\R$, $x\in\partial\O$}\nonumber
\end{array} \right.
\end{equation}
By  standard parabolic estimates and and compact injection theorems, as $k\to\infty$, we get $\tilde{u}_k\to\tilde{u}_\infty$ (up to subsequences)  locally uniformly in $\R\times\O$. Thanks to the Lipschitz continuity of $f(x,s)$ with respect to $s$, there exists a negative constant $-M$ so that $\tilde{u}_\infty$ satisfies the equation
\begin{equation}
\left\{\begin{array}{ll}
\partial_t\tilde{u}_\infty\geq \Delta \tilde{u}_\infty+c\partial_1\tilde{u}_\infty-M\tilde{u}_\infty &\textrm{$t\in\R$, $x\in\O$}\\
\partial_\nu\tilde{u}_\infty=0&\textrm{$t\in\R$, $x\in\partial\O$.}\nonumber
\end{array} \right.
\end{equation}
Moreover,  $\tilde{u}_\infty(0,0,0)=0$. The strong maximum principle implies that $\tilde{u}_\infty(t,x)=0$, $\forall t\leq0,x\in\O$. Choosing $t=-2t_k$, we get $\lim_{k\to\infty}{u}^*(-t_k,x)=0$, $\forall x\in\O$. This contradicts   assumption (\ref{V1T2.2}) and thus   completes the proof.
\end{proof}
\begin{remark} In this proof, we have used different arguments from the ones of Berestycki and Rossi, Lemma 3.4 \cite{BR2}. More precisely, we choose a  solution of Eq. (\ref{V1T2.1}), $w(t,x)$ starting by a subsolution of stationary equation $\varepsilon U(x)$, which is not necessarily compactly supported but bounded. On the other hand, we reach the contradiction by showing that $\lim_{k\to\infty}{u}^*(-t_k,x)=0$, $\forall x\in\O$, which differs from the way to show that for all $r>0$,  $\liminf_{n\to\infty,x\in\O_r}u^*(t_n,x)>0$ in \cite{BR2}.

\end{remark}

We are now ready to prove Theorem \ref{thm:T2} to derive long time behavior of solution of (\ref{0}) in $L^\infty(\O)$.

\begin{proof}[\textbf{Proof of \thm{T2}}]
Let $S':=\max\{S,\|u_0\|_{L^\infty(\O)}\}$, where $S$ is the positive constant in (\ref{4}). Then $0$ and $S'$ are respectively sub and super solution of (\ref{0}). It follows from \cite{LI}, by the standard theory of semilinear parabolic equations, that there exists a unique (weak) solution to (\ref{0}) satisfying $0\leq u\leq S'$ with initial condition $u_0(x)$. We deduce, from the parabolic strong maximum principle, that $u(t,x)>0$ $\forall t>0, x\in\overline{\O}$ (by extending $u(t,x)$ to larger cylinder to make the "corner" smooth). The locally long time behavior of $u$ follows by applying directly Theorem \ref{thm:T'2} and the standard parabolic estimates. Actually, one sets $\tilde{u}(t,x_1,y)=u(t,x_1+ct,y)$. The solution of this type satisfies $\tilde{u}(0,x)=u_0(x)$ and  satisfies
\begin{equation}
\left\{\begin{array}{ll}
\partial_t\tilde{u} =\Delta \tilde{u} +c\partial_1 \tilde{u} +f(x,\tilde{u} ) &\textrm{$t>0$, $x\in\O$}\\
\partial_\nu \tilde{u} =0 &\textrm{$t>0$, $x\in\partial\O$}.\label{V1T2.3}
\end{array} \right.
\end{equation}
To apply Theorem \ref{thm:T'2}, we only need to verify condition (\ref{V1T2.2}) when $c<c^*$. Indeed, the first case $c\geq c^*$ is easily seen. Let $(t_n)$ be a sequence such that $t_n\to+\infty$ as $n\to\infty$, we infer, by the parabolic estimates and embedding theorems, that the sequence $\tilde{u}(t+t_n,x)$ converges (up to subsequences) to some nonnegative bounded solution $u^*(t,x)$ of Eq. (\ref{V1T2.1}) as $n\to\infty$ locally in $\O$. By Theorem \ref{thm:T'2}, this limit is identically equal to $0$ when $c\geq c^*$. Consider the case $c<c^*$, we necessarily verify  condition (\ref{V1T2.2}). Let $U$ be the unique solution of stationary solution of (\ref{V1T2.3}) and $(t_n)$ be such that $t_n\to-\infty$ as $n\to\infty$. Fix $R>0$, the Hopf lemma implies that $\inf_{\overline{\O}_R}\tilde{u}(1,x)>0$. For $\varepsilon>0$, the function $\varepsilon U$ is a subsolution to stationary equation of (\ref{V1T2.3}) when $\varepsilon\leq 1$. Take $\varepsilon$ small enough such that  $\varepsilon U\leq\tilde{u}(1,x)$ in $\overline{\O}_R$. Hence $(t,x)\mapsto\varepsilon U(x)$ is a subsolution to (\ref{V1T2.3}) in $\R\times\O_R$. The parabolic comparison yields $\varepsilon{U}(x)\leq \tilde{u}(t+1,x)$ for $t>0$ and $x\in\O_R$. As a consequence 
$$\inf_{t\in\R}u^*(t,0,y_0)\geq U(0,y_0)>0,\quad\quad\textrm{for some $y_0\in\omega$}.$$

It remains to show that  the convergences hold uniformly in $\O$. Assume by  contradiction that $$\lim_{t\to\infty}\tilde{u}(t,x)=U(x)$$ is not uniform in $x\in\O$. This means that there exist  $\varepsilon>0$, $(t_n)\in\R^+$ and $(x_{1,n}, y_n)\in\O$ such that
$$\lim_{n\to\infty}t_n=\infty,\quad\quad|\tilde{u}(t_n,x_{1,n},y_n)-U(x_{1,n},y_n)|\geq \varepsilon\quad\forall n\in\N.$$
Since $y_n\in\omega$, which is bounded, one may assume that $y_n$ converges (up to subsequences) to $\zeta\in\overline{\omega}$. The locally uniform convergences  yields  $\lim_{n\to\infty}|x_{1,n}|=\infty$, therefore $\lim_{n\to\infty}U(x_{1,n},y_n)=0$ in both cases $c\geq c^*$ and $c<c^*$. Then we get
$$\liminf_{n\to\infty}\tilde{u}(t_n,x_{1,n},y_n)\geq \varepsilon.$$
The standard parabolic estimates and compact injections again imply that $\tilde{u}(t+t_n,x_1+x_{1,n},y_n)$  converges (up to subsequences) to $\tilde{u}_\infty(t,x_1,\zeta)$ uniformly in $(-\rho,\rho)\times\O_\rho$, for any $\rho>0$. In particular, $\tilde{u}_\infty$ satisfies $\tilde{u}_\infty(0,0,\zeta)\geq\varepsilon$ and satisfies the following equation
\begin{equation}
\left\{\begin{array}{ll}
\partial_t\tilde{u}_\infty\leq \Delta \tilde{u}_\infty+c\partial_1\tilde{u}_\infty+\mu(y)\tilde{u}_\infty &\textrm{$t\in\R$, $x\in\O$}\\
\partial_\nu\tilde{u}_\infty=0&\textrm{$t\in\R$, $x\in\partial\O$.}
\end{array} \right.
\end{equation}
By  condition (\ref{6}),  there exists an eigenpair $(\lambda_\mu,\varphi)$ of Eq. (\ref{T1.0.1}) satisfying $\lambda_\mu>0$. Setting $\upsilon(t,y)=S''e^{-\lambda_\mu (t+h)}\varphi(y)$, we have
$$\partial_t\upsilon-\Delta\upsilon-c\partial_1\upsilon-\mu(y)\upsilon=0.$$
We know, by the Hopf lemma, that $\inf_{\omega}\varphi(y)>0$. Hence, the function $W(t,x)=\tilde{u}_\infty(t,x)-\upsilon(t,y)$ satisfies $W(-h,x)\leq 0$ for $S''$ large enough. Let us call $\mathcal{L}_1=\partial_t-\Delta-c\partial_1-\mu(y)$, then
$$\left\{\begin{array}{ll}
\mathcal{L}_1W\leq0 &\textrm{$t\in\R$, $x\in\O$}\\
\partial_\nu W\leq0&\textrm{$t\in\R$, $x\in\partial\O$.}
\end{array} \right.
$$
Set $W(t,x)=z(t,x)\varphi(y)$ we have $\partial_\nu z\leq0$ on $\partial\O$, $z(-h,x)\leq0$ $\forall x\in\O$ and $z$ satisfies
$$0\geq\dfrac{\mathcal{L}_1W}{\varphi}=z_t-\Delta z-\dfrac{2}{\varphi}\nabla_y\varphi\cdot\nabla_y z-c\partial_1 z+\lambda_\mu z.$$
Since $\lambda_\mu>0$, the parabolic maximum principle implies that $z\leq0$ in $(-\infty,-h)\times\overline{\O}$. As a consequence, one gets
$$0<\varepsilon\leq\tilde{u}_\infty(0,0,\zeta)\leq\lim_{h\to-\infty}\upsilon(0,\zeta)=0.$$
This  contradiction  concludes the proof.
\end{proof}

The next result concerns the long time behavior of the solution of Eq. (\ref{0}) in $L^1(\O)$. The main difficulty  is to deal with the case $f_s(x,0)$ being sign-changing as $|x|$ large. To overcome it, we decompose  solution of (\ref{0}) into the sum of two integrable functions. The following lemma plays a key role.

\begin{lemma} \label{lem:l3}
Let  $w(t,x)$ be a nonnegative bounded solution of
\begin{equation}
\left\{\begin{array}{ll}
\partial_t w=\Delta w+c\partial_1 w+\zeta(t,x)w,&\textrm{$t>0$, $x\in\O$} \\
\partial_\nu w(t,x)=0,&\textrm{$t>0$, $x\in\partial\O$}
\end{array} \right.\label{V1T3.1}
\end{equation}
with initial function $w(0,\cdot)=w_0\in L^1(\O)\cap L^\infty({\O})$. We assume, in addition, that $\lim _{t \to \infty } w(t,x) = 0$, pointwise in $x\in\O$, $\zeta(t,x)\in L^\infty(\R\times\O)$ and that there exists $\mu\in L^\infty(\omega)$ satisfying
\begin{equation}
\mu(y)=\lim_{R\to\infty}\sup_{\su{t>0}{|x_1|\geq R}}\zeta(t,x_1,y), \quad\textrm {and}\quad\lambda_N(-\Delta_y-\mu(y),\omega)>0.\label{V1T3.2}
\end{equation}
There holds $$\lim_{t\to\infty}\Vert w(t,x)\Vert_{L^1(\O)}=0.$$
\end{lemma}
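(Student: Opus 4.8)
The plan is to split $\O=\O_R\cup(\O\setminus\O_R)$ for a single, conveniently large $R$, and to treat the two pieces differently. On the bounded slab $\O_R$ one has $0\le w(t,\cdot)\le M:=\|w\|_{L^\infty((0,\infty)\times\O)}$, so the pointwise decay $\lim_{t\to\infty}w(t,x)=0$ together with dominated convergence on the finite-measure set $\O_R$ gives $\int_{\O_R}w(t,\cdot)\,dx\to0$, for every fixed $R$. Everything therefore reduces to controlling, for one suitable $R$, the tail mass $\int_{\O\setminus\O_R}w(t,\cdot)\,dx$; this is where hypothesis (\ref{V1T3.2}) --- global unfavourability at infinity --- must be used, and it will be exploited through the principal eigenfunction of the cross section $\omega$.

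To set this up I would fix $\delta\in(0,\lambda_\mu)$ with $\lambda_\mu:=\lambda_N(-\Delta_y-\mu(y),\omega)>0$, and use (\ref{V1T3.2}), just as in the proof of Proposition~\ref{lem0}, to choose $R$ so large that $\zeta(t,x_1,y)\le\mu(y)+\delta$ for all $t>0$, $|x_1|\ge R$, $y\in\omega$. Let $\varphi\in W^{2,p}(\omega)$ be the Neumann principal eigenfunction of (\ref{T1.0.1}), $-\Delta_y\varphi=(\mu(y)+\lambda_\mu)\varphi$, $\partial_\nu\varphi=0$, normalised so that $\varphi>0$ and $\varphi_0:=\inf_{\ol\omega}\varphi>0$ (Hopf lemma). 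Note also that $w(t,\cdot)\in L^1(\O)$ for every $t\ge0$: integrating the equation, $\tfrac{d}{dt}\int_\O w=\int_\O\zeta w\le\|\zeta\|_\infty\int_\O w$, so $\|w(t)\|_{L^1(\O)}\le e^{\|\zeta\|_\infty t}\|w_0\|_{L^1(\O)}<\infty$.

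Then I would study the weighted tail functional $\mathcal{E}(t):=\int_{\O\setminus\O_R}w(t,x)\,\varphi(y)\,dx$, finite by the above. Differentiating in $t$ (legitimate from parabolic regularity), substituting the equation, and integrating by parts --- in $y$ over $\omega$ the Neumann conditions $\partial_\nu w=\partial_\nu\varphi=0$ kill the lateral boundary terms and turn $\Delta_y$ into the multiplier $-(\mu(y)+\lambda_\mu)$, while in $x_1$ over $\{|x_1|>R\}$ the terms at $x_1=\pm\infty$ vanish because $w(t,\cdot)$ and $\partial_1 w(t,\cdot)$ tend to $0$ as $|x_1|\to\infty$ (from $w(t,\cdot)\in L^1\cap L^\infty$ and interior parabolic estimates) --- I expect
\begin{equation}
\mathcal{E}'(t)=\int_{\O\setminus\O_R}\big(\zeta(t,x)-\mu(y)-\lambda_\mu\big)\,w\,\varphi\,dx+B(t)\le-(\lambda_\mu-\delta)\,\mathcal{E}(t)+B(t),\nonumber
\end{equation}
where I used $\zeta(t,x)\le\mu(y)+\delta$ and $w\varphi\ge0$ on $\O\setminus\O_R$, and where $B(t)=\int_\omega\varphi(y)\big[\partial_1 w(t,-R,y)+c\,w(t,-R,y)-\partial_1 w(t,R,y)-c\,w(t,R,y)\big]\,dy$ is the flux through the artificial interface $\{x_1=\pm R\}\times\omega$. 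Granting $B(t)\to0$ as $t\to\infty$, an elementary ODE comparison applied to $\mathcal{E}'\le-(\lambda_\mu-\delta)\mathcal{E}+B$ with $\lambda_\mu-\delta>0$ yields $\limsup_{t\to\infty}\mathcal{E}(t)\le(\lambda_\mu-\delta)^{-1}\limsup_{t\to\infty}B(t)=0$, hence $\mathcal{E}(t)\to0$ and so $\int_{\O\setminus\O_R}w(t,\cdot)\,dx\le\varphi_0^{-1}\mathcal{E}(t)\to0$. Adding this to $\int_{\O_R}w(t,\cdot)\,dx\to0$ gives $\|w(t,\cdot)\|_{L^1(\O)}\to0$.

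The main obstacle is precisely showing $B(t)\to0$, i.e.\ upgrading the pointwise hypothesis $w(t,x)\to0$ to the statement that $w(t,\cdot)$ and $\nabla w(t,\cdot)$ tend to $0$ locally uniformly as $t\to\infty$ (the interface $\{x_1=\pm R\}\times\ol\omega$ being compact, this suffices, by dominated convergence inside the $y$-integral). I would obtain it from interior and boundary parabolic estimates, uniform in $t$ because the coefficients are bounded: they make $\{w(t,\cdot)\}_{t\ge1}$ and $\{\nabla w(t,\cdot)\}_{t\ge1}$ equicontinuous on compact sets, and a uniformly bounded, equicontinuous family converging pointwise converges locally uniformly; in particular $\|\nabla w(t,\cdot)\|_{L^\infty(K)}\le C\|w\|_{L^\infty((t-1,t]\times K')}\to0$. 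The remaining points are routine given $w(t,\cdot)\in L^1(\O)\cap L^\infty(\O)$: differentiating $\mathcal{E}$ under the integral sign, and the integration by parts in the unbounded $x_1$-direction, which can be made rigorous by first integrating over $\{R<|x_1|<R'\}$ and letting $R'\to\infty$ along a sequence on which $\int_\omega w(t,R',y)\,dy$ and $\int_\omega|\partial_1 w(t,R',y)|\,dy$ tend to $0$.
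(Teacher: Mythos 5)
Your argument is correct, but it follows a genuinely different route from the paper's. The paper never splits the domain: it splits the \emph{solution}, writing $w=w_1+w_2$ via superposition for the operator $P=\partial_t-\Delta-c\partial_1-\mu(y)-\delta$, where $w_1$ carries the initial datum and solves $Pw_1=0$, while $w_2$ starts from $0$ and is driven by the source $g=(\zeta-\mu-\delta)w$, which is nonpositive outside $\O_R$. The change of unknown $v_1=e^{(\lambda_\mu-\delta)t}w_1/\varphi$ kills the zero-order term, so the maximum principle gives exponential decay of $\|w_1(t)\|_{L^\infty}$, and a translated-average trick ($v_1^r(t,\rho,y)=\int_{-r}^r v_1(t,x_1+\rho,y)\,dx_1$, itself a subsolution bounded by a constant) gives a uniform $L^1$ bound, hence $\|w_1(t)\|_{L^1}\to0$; the remainder $w_2$ is dominated by a \emph{stationary integrable} supersolution $\xi=Ce^{-\tau|x_1|}\varphi(y)$ built exactly as in Proposition~\ref{lem0}, and the conclusion follows from dominated convergence with dominating function $w_1+w_2^+$. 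Your proof instead keeps $w$ intact, isolates the tail mass $\mathcal{E}(t)=\int_{\O\setminus\O_R}w\varphi$, and closes a Gronwall inequality $\mathcal{E}'\le-(\lambda_\mu-\delta)\mathcal{E}+B$. Both proofs hinge on the same two ingredients — the cross-sectional Neumann eigenfunction $\varphi$ with $\inf_{\ol\omega}\varphi>0$ and $\lambda_\mu>0$, and the bound $\zeta\le\mu(y)+\delta$ outside a slab — and both ultimately invoke the pointwise hypothesis $w(t,x)\to0$. What your version buys is economy: a single differential inequality replaces the construction of two auxiliary solutions and two separate supersolutions. What it costs is that all the ``infinity'' issues the paper hides inside comparison-principle arguments resurface as analytic chores for you: the a priori bound $w(t,\cdot)\in L^1(\O)$ (cleanest via comparison with $e^{\|\zeta\|_\infty t}$ times the Neumann heat semigroup, which preserves $L^1$, rather than by differentiating $\int_\O w$, which already presupposes the decay at $x_1=\pm\infty$ you are trying to use), the vanishing of the flux terms at $x_1=\pm\infty$ in the integration by parts, and the upgrade from pointwise to locally uniform convergence of $w$ and $\nabla w$ needed for $B(t)\to0$. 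You correctly identify these and they are all standard (working with the integrated form of the inequality on $(t_1,t_2)$ avoids even having to differentiate $\mathcal{E}$ pointwise), so I regard them as details to be written out rather than gaps.
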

\begin{proof}[Proof of Lemma \ref{lem:l3}]From  Eq. (\ref{V1T3.1}), for any $\delta>0$, we have 
$$\partial_t w-\Delta w-c\partial_1 w-(\mu(y)+\delta)w=(\zeta(t,x)-\mu(y)-\delta)w.$$ 
Let us call
$$P:=\partial_t-\Delta-c\partial_1-\mu(y)-\delta,\quad\quad g(t,x):=(\zeta(t,x)-\mu(y)-\delta)w(t,x).$$
Then, we infer, from the superposition principle, that $w=w_1+w_2$, where $(w_1,w_2)$ is the solution of the system
\begin{equation}
\left\{\begin{array}{ll}
Pw_1=0 & \textrm{$t>0$, $x\in\O$}\\
\partial_\nu w_1=0 & \textrm{$t>0$, $x\in\partial\O$},
\end{array} \right.\quad\quad \left\{\begin{array}{ll}
Pw_2=g(t,x) & \textrm{$t>0$, $x\in\O$}\\
\partial_\nu w_2=0  & \textrm{$t>0$, $x\in\partial\O$},
\end{array} \right.\nonumber
\end{equation}
with the initial condition $(w_1,w_2)(0,x)=(w_0(x),0)$. From  condition (\ref{V1T3.2}), for any $\delta>0$, there exist $R>0$, such that \begin{center}
$\zeta(t,x_1,y)\leq \mu(y)+\delta$,\quad\quad  $\forall t>0,\forall(x_1,y)\in\O\setminus\O_R.$
\end{center}
and there exists an eigenpair $(\lambda_\mu,\varphi)$ of Eq. (\ref{T1.0.1}) satisfying $\lambda_\mu>0$.

Letting $\delta<\lambda_\mu$, we set $v_1(t,x)=e^{(\lambda_\mu-\delta) t}w_1(t,x)/\varphi(y)$. Then $v_1$ satisfies the equation
\begin{equation}
\left\{\begin{array}{ll}
\partial_t v_1-\Delta_x v_1-2\nabla_y v_1\cdot\dfrac{\nabla_y\varphi}{\varphi}-c\partial_1 v_1\leq0.& x\in\O.\\
\partial_\nu v_1\geq0& x\in\partial \O.\label{T3.3}
\end{array}\right.
\end{equation}
By the Hopf lemma, we know that $\inf_{y\in\omega}\varphi>0$, then $\|v_1(0,\cdot)\|_{L^\infty(\O)}\leq\|w_0\|_{L^\infty(\O)}/\inf_{\omega}\varphi$.
Then, the parabolic maximum principle yields
 $\|v_1\|_{L^\infty(\O)}\leq\|w_0\|_{L^\infty(\O)}/\inf_{\omega}\varphi$. It follows immediately that $\|w_1(t,\cdot)\|_{L^\infty(\O)}\to0$ as $t\to\infty.$
On the other hand, set $v_1^r(t,\rho,y)=\int_{-r}^r v_1(t,x_1+\rho,y)dx_1$,  we obtain
\begin{equation}
\partial_t v_1^r-\Delta v_1^r-2\dfrac{\nabla_y \varphi}{\varphi}\cdot\nabla_y v_1^r(t,\rho,y)-c\partial_\rho v_1^r\leq0\quad\quad(\rho,y)\in\O.\label{T3.4}
\end{equation}
Since $w_0(0,\cdot)\in L^1(\O)\cap L^\infty({\O})$, $v_1^r(0,\rho,y)$ is well-defined a.e on $\O$, $\forall r>0$. Moreover, there exists a constant $M$ such that  $v_1^r(0,\rho,y)\leq M$, a.e in ${\O}$, $\forall r>0$, $M$ is a supersolution of Eq. (\ref{T3.4}). Then, we infer from the parabolic comparison principle \cite{LI}, that
$$v_1^r(t,\cdot)\leq M,\quad \textrm{a.e in ${\O}$, $\forall t>0,\forall r>0$}.$$
Therefore, $\|v_1(t,\cdot)\|_{L^1(\O)}=\lim_{r\to\infty}\int_{\omega}\int_{-r}^r v_1(t,x_1+\rho,y)dx_1dy\leq M|\omega|$, $\forall t>0$. As a consequence, we get
$$\lim_{t\to\infty}\|w_1(t,\cdot)\|_{L^\infty(\O)}=\lim_{t\to\infty}\|w_1(t,\cdot)\|_{L^1(\O)}=0.$$

On the other hand, by assumption, $w$ is bounded, then $w_2$ is bounded and so it is integrable on any compact set. The same argumentation as of Proposition \ref{lem0} enables us to find supersolution of the problem satisfied by $w_2$ of the form $\xi(x_1,y)=Ce^{-{\tau}|x_1|}\varphi(y)$ such that $P(\xi)\geq0\geq g(t,x)$ in $\O\setminus\O_R$. We have $w_2(0,x)=0<\xi(x)$, $\partial_\nu w_2\leq\partial_\nu\xi$ for $y\in\partial \omega$ and the fact that $\inf_\omega\varphi>0$ allows us to find a constant $C$  such that $w_2(\pm R,y)\leq C e^{-\tau R}\varphi(y)$, $\forall y\in\omega$. Therefore, the parabolic maximum principle implies that $w_2(t,x)\leq \xi(x)$, $\forall x\in\O\setminus \overline{\O}_R$. Since $w_2$ is bounded, one can choose $C$ large enough  so that $w_2(t,x)\leq \xi(x)$, $\forall x\in\O$. Moreover,
$$\forall x\in\O\quad\quad\lim_{t\to\infty} w_2^+(t,x)=\lim_{t\to\infty}(w-w_1)^+(t,x)=0.$$
Hence $0\leq w= w_1+w_2\leq w_1+w^+_2$, which is integrable on $\O$. It follows from Lebesgue's dominated convergence theorem that  $\lim _{t \to \infty } \|w(t,x)\|_{L^1(\O)} = 0$  because  $\lim _{t \to \infty } w(t,x) = 0$  for $x\in\O$. We thus conclude  the proof.
\end{proof}
We are now in a position to prove  \thm{T3}
\begin{proof}[\textbf{Proof of \thm{T3}}] The proof is a direct consequence of  Lemma \ref{lem:l3}.
Let $u$ be the solution of $(\ref{0})$ with $u(0,x)=u_0(x)\in L^\infty(\O)\cap L^1(\O)$. The function $\tilde{u}(t,x_1,y):=u(t,x_1+ct,y)$ satisfies  Eq. $(\ref{V1T3.1})$ with the same initial condition $u_0$. Let $W$ be defined as following :
\begin{equation}
W(x)=\left\lbrace\begin{array}{ll}
0&\textrm{if $c\geq c^*$}\\
U(x) & \textrm{if $c<c^*$,}
\end{array}\right.
\end{equation}
where $U(x)$ is the unique positive solution of Eq. (\ref{3}) when $c<c^*$.

Let $\overline{u}$,  $\underline{u}$ be respectively the solutions of $(\ref{V1T3.1})$ with initial conditions $\overline{u}(0,x)=\max\lbrace u_0(x),W(x)\rbrace$ and $\underline{u}(0,x)=\min\lbrace u_0(x),W(x)\rbrace$.  We know, from \thm{T2} that the functions $\overline{u}(t,x_1-ct,y)$ and $\underline{u}(t,x_1-ct,y)$ converge to $W(x)$ as $t\to\infty$, uniformly with respect to $x\in\O$. Moreover, the parabolic maximum principle yields
$$\forall t>0,x\in\O\quad \overline{u}(t,x)\geq\max\lbrace \tilde{u}(t,x),W(x)\rbrace\quad \underline{u}(t,x)\leq\min\lbrace \tilde{u}(t,x),W(x)\rbrace.$$
Therefore, the functions $\overline{w}(t,x):=\overline{u}(t,x)-W(x)$ and $\underline{w}(t,x):=W(x)-\underline{u}(t,x)$ is nonnegative bounded solution of Eq. $(\ref{V1T3.1})$ with
$$\overline{\zeta}(t,x)=\dfrac{f(x,\overline{u})-f(x,W)}{\overline{u}-W}\quad ;\quad \underline{\zeta}(t,x)=\dfrac{f(x,W)-f(x,\underline{u})}{W-\underline{u}}. $$
Thanks to condition (\ref{5}), one easily sees that $\overline{\zeta}$ and $\underline{\zeta}$ are less than $f_s(x,0)$. Thanks to  condition (\ref{6}), $\overline{\zeta}$ and $\underline{\zeta}$ satisfy (\ref{V1T3.2}). The initial conditions $\overline{w}(0,x), \underline{w}(0,x)\in L^1(\O)$ allow one to apply Lemma \ref{lem:l3} to derive 
$$\lim_{t\to\infty}\Vert \overline{u}-W\Vert_{L^1(\O)}=0\quad ;\quad \lim_{t\to\infty}\Vert W-\underline{u}\Vert_{L^1(\O)}=0.$$
This completes the proof because $\underline{u}\leq \tilde{u}\leq \overline{u}.$
\end{proof}

The next section is of independent interest. We are concerned with the existence, uniqueness, long time behavior of pulsating fronts, which are T-periodic in $t$ and  periodic in $y$.

\section{The partially periodic environment with time dependence}

Before proving the main results, let us introduce some new definitions  and preliminary results that are needed in this section.

\begin{proposition}\label{pro:p2}Let $\mathcal{O}_r=\R\times(-r,r)\times\R^{N-1}$, then for any $r>0$, there exists a  real number $\lambda_{p}(r)$ and $\chi_r(t,x)\in C^{1,2}_{t,x}(\R\times(-r,r)\times\R^{N-1})$,  solving the eigenvalue problem
\begin{equation}
\left\{\begin{array}{ll}
\mathcal{P}\chi_r=\lambda_{p}(r)\chi_r &\textrm{a.e in $\mathcal{O}_r$}\\
\chi_r=0 & \textrm{on $\partial\mathcal{O}_r$}\\
\textrm{$\chi_r$ is periodic both in $y$ and $t$},
\end{array} \right.\nonumber
\end{equation}
where $\mathcal{P}$ is the parabolic operator defined in (\ref{12.1}) and $f$ is periodic in $y$ and $t$. Moreover, as $r\to\infty$, $\lambda_{p}(r)$ decreasingly converges to $\lambda_p$ ,   where $\lambda_p=\tilde{\lambda}_1(\mathcal{P},\R\times\mathbb{R}^N)$ defined in (\ref{13}) and there exists an  eigenfunction $\chi\in C^{1,2}_{t,x}(\R\times\R^{N})$ associated with $\tilde{\lambda}_{1}$ such that
$\mathcal{L}\chi=\tilde{\lambda}_{1}\chi$ a.e in $\R^{N+1}$.
\end{proposition}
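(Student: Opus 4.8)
The idea is to construct the Dirichlet eigenpairs on the truncated slabs $\mathcal{O}_r$ first, then pass to the limit $r\to\infty$ to recover the generalized principal eigenvalue on the whole space. Since the coefficients of $\mathcal{P}$ are $T$-periodic in $t$ and periodic in $y$, and the slab $\mathcal{O}_r = \R\times(-r,r)\times\R^{N-1}$ is bounded only in the $x_1$-direction, the natural first step is to reduce the problem on $\mathcal{O}_r$ to a problem on the \emph{bounded} domain $\mathcal{D}_r := [0,T]\times(-r,r)\times\prod_{i=2}^{N}[0,L_i]$ with periodic-in-$t$, periodic-in-$y$ and Dirichlet-in-$x_1$ boundary conditions. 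On such a bounded cell, the existence and uniqueness of a principal eigenpair $(\lambda_p(r),\chi_r)$ with $\chi_r>0$ in the interior follows from the Krein–Rutman theory for periodic-parabolic operators; this is precisely the content of Nadin's Theorem 2.7 \cite{NA1} (applied here on the slab rather than on a periodic cell in all space variables), so I would simply invoke it, after checking that the regularity hypotheses $a_{ij},b_i,c_i\in C^{\frac{\alpha}{2},\alpha}_{t,x}$ and uniform ellipticity transfer to our operator $\mathcal{P}$ — which they do, since $\mathcal{P}$ has smooth ($=$ constant $1$) principal part and Hölder zero-order coefficient $f_s(t,x,0)$. The resulting $\chi_r$, extended by $0$ outside the $x_1$-slab and periodically in $t$ and $y$, lies in $C^{1,2}_{t,x}$ of the slab and solves the stated problem.

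Next I would establish monotonicity: $\lambda_p(r)$ is nonincreasing in $r$. This is the standard domain-monotonicity of Dirichlet principal eigenvalues — if $r_1<r_2$, then $\chi_{r_1}$ (extended by $0$) is an admissible subsolution on $\mathcal{O}_{r_2}$, so by the variational/min-max characterization of the periodic-parabolic principal eigenvalue (or, concretely, by testing the definition \eqref{13} on the truncated domain against $\chi_{r_1}$ and using the maximum principle to compare), one gets $\lambda_p(r_2)\le\lambda_p(r_1)$. Monotone and — crucially — bounded below: by the normalization $1$ as test function in the definition of $\lambda_p(r)$ one obtains $\lambda_p(r)\ge -\sup_{\R\times\R^N}f_s(t,x,0)$, which is finite under the boundedness assumption $f_s(\cdot,\cdot,0)\in L^\infty(\R^{N+1})$. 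Hence $\lambda_p(r)\downarrow\lambda_\infty$ for some $\lambda_\infty\in\R$ as $r\to\infty$, and one sees directly that $\lambda_\infty = \tilde\lambda_1(\mathcal{P},\R\times\R^N)$: the inequality $\lambda_p(r)\ge\tilde\lambda_1$ follows because $\chi_r$ (suitably modified near the lateral faces) is a positive supersolution for $\mathcal{P}-\lambda_p(r)$ in the whole slab, hence admissible in \eqref{13}; and the reverse inequality $\lambda_\infty\le\tilde\lambda_1$ is obtained by constructing, from any admissible test function in \eqref{13}, a comparison on $\mathcal{O}_r$ showing $\lambda_p(r)\le\lambda+o(1)$.

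Finally, to produce the limiting eigenfunction $\chi$ on all of $\R^{N+1}$, I would normalize $\chi_r$ by, say, $\chi_r(t_0,0,y_0)=1$ at a fixed interior point, and extract a locally convergent subsequence. The key tool here is the \textbf{Harnack inequality for periodic-parabolic operators}: because $\chi_r$ is a positive solution of a linear parabolic equation with coefficients bounded uniformly in $r$, on every fixed compact set $K\times[-\rho,\rho]\times\prod[0,L_i]$ with $\rho<r$ the Harnack inequality gives a uniform upper bound $\sup\chi_r\le C(\rho)\inf\chi_r\le C(\rho)$; interior parabolic $L^p$ and Schauder estimates then give uniform $C^{1,2}_{t,x}$ bounds on compacts, so by a diagonal extraction $\chi_r\to\chi$ in $C^{1,2}_{t,x,\,loc}(\R\times\R^N)$ along a subsequence, with $\chi\ge0$, $\chi(t_0,0,y_0)=1$, $T$-periodic in $t$, periodic in $y$, and $\mathcal{P}\chi=\lambda_\infty\chi=\tilde\lambda_1\chi$ a.e. The strong maximum principle upgrades $\chi\ge0$, $\chi\not\equiv0$ to $\chi>0$ everywhere. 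The main obstacle I anticipate is precisely this passage to the limit: one must make sure the Harnack constant is genuinely independent of $r$ (which it is, since it depends only on the ellipticity bounds and the $L^\infty$ bounds on the coefficients and on the size of the compact set, not on $r$), and one must check the periodicity of $\chi$ is preserved under the limit, which is automatic since periodicity in $t$ and $y$ is a closed condition under local $C^{1,2}$ convergence. A secondary technical point worth spelling out is the smoothing of the "corner" where the lateral Dirichlet faces $\{x_1=\pm r\}$ meet — as in the proof of Theorem \ref{thm:T2}, one handles this by extending solutions to a slightly larger slab so that interior estimates apply up to $\{x_1=\pm\rho\}$.
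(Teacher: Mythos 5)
Your proposal is correct in substance and follows the same overall scheme as the paper --- truncate, apply Krein--Rutman/Hess theory to a bounded periodic-parabolic problem, pass to a monotone limit of the eigenvalues, and use Harnack plus Schauder estimates to extract a limiting eigenfunction --- but the intermediate truncation is genuinely different. The paper truncates in $y$ as well, working on $\mathcal{O}_{r,\rho}=\R\times(-r,r)\times B_\rho$ with Dirichlet conditions on the whole lateral boundary and periodicity only in $t$ (citing Hess \cite{HE}), then lets $\rho\to\infty$ following the scheme of Theorems 2.16--2.17 of \cite{NA1}; the $y$-periodicity of $\chi_r$ is only recovered in that limit. You instead exploit the periodicity of the coefficients from the outset and pose the problem on the compact cell $[0,T]\times(-r,r)\times\prod_{i=2}^{N}[0,L_i]$ with periodic conditions in $t$ and $y$ and Dirichlet in $x_1$. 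This skips the $\rho$-limit entirely and is arguably cleaner, at the price of needing the periodic-parabolic Krein--Rutman theorem on a cylinder with mixed periodic/Dirichlet boundary conditions rather than on a ball; Hess's framework does accommodate this, so the step is legitimate. Your treatment of the $r\to\infty$ limit (normalization at an interior point, $r$-independent Harnack constant, diagonal extraction, closedness of periodicity under local $C^{1,2}$ convergence) matches the paper's intent and is correctly justified.

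One sub-argument should be repaired. Your mechanism for $\lambda_p(r)\ge\tilde{\lambda}_1$ --- taking $\chi_r$, ``suitably modified near the lateral faces,'' as a test function in (\ref{13}) --- does not work as stated: the zero extension of a Dirichlet eigenfunction is neither positive on $\R\times\R^N$ nor a supersolution across the faces $\{x_1=\pm r\}$, and there is no simple modification that makes it admissible. The standard argument runs in the other direction: for any $\lambda<\tilde{\lambda}_1$, the associated global test function $\phi>0$ restricts to a positive supersolution of $\mathcal{P}-\lambda$ on $\mathcal{O}_r$ which is bounded away from zero on $\partial\mathcal{O}_r$, and comparison with $\chi_r$ (equivalently, the maximum-principle characterization of the Dirichlet principal eigenvalue) yields $\lambda\le\lambda_p(r)$, hence $\tilde{\lambda}_1\le\lambda_p(r)$ for every $r$. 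The reverse inequality $\lambda_\infty\le\tilde{\lambda}_1$ is then exactly what your final step delivers: the limit $\chi>0$ is $T$-periodic in $t$, periodic in $y$, and satisfies $(\mathcal{P}-\lambda_\infty)\chi=0\ge0$ on all of $\R\times\R^N$, so it is admissible in (\ref{13}). With that correction the proof is complete.
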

\begin{proof} To prove the existence of $\lambda_p(r)$, we consider the eigenvalue problem
\begin{equation}
\left\{\begin{array}{ll}
\mathcal{P}\chi_{r,\rho}=\lambda_{p}({r,\rho})\chi_{r,\rho} &\textrm{a.e in $\mathcal{O}_{r,\rho}$}\\
\chi_{r,\rho}=0 & \textrm{on $\partial\mathcal{O}_{r,\rho}$}\\
\textrm{$\chi_{r,\rho}$ is periodic both in $t$},
\end{array} \right.\nonumber
\end{equation}
where $\mathcal{O}_{r,\rho}=\R\times(-r,r)\times B_\rho$. It has been proved (see \cite{HE}) that the eigenvalue $\lambda_{p}({r,\rho})$ is well-defined and unique
and that $\chi_{r,\rho}$ is unique up to multiplicative constant. Thanks to availability of Krylov-Safonov-Harnack inequality and Schauder estimates for parabolic operator, we can use the same scheme of the arguments in the proof of Theorem 2.16, 2.17 in \cite{NA1} to show that $\lambda_{p}({r,\rho})\searrow\lambda_{p}({r})$ as $\rho\to\infty$ and $\lambda_{p}({r})\searrow\lambda_p$ as $r\to\infty$. The existence of $\chi_{r,\rho}$ and $\chi$ respectively associated with  $\lambda_{p}({r,\rho})$ and $\lambda_p$ is followed. We omit the details.
\end{proof}

We are now able to prove  Theorem \ref{thm:T4}.

\begin{proof}[\textbf{Proof of \thm{T4}}]
Recall that $\tilde{\lambda}_1=\tilde{\lambda}_1(\mathcal{P},\R\times\R^N)$. We  consider the first case $\tilde{\lambda}_1<0$. It  follows from Proposition \ref{pro:p2} that for $r>0$  large enough, $\lambda_p(r)<0$. Let $\chi_r$ be an eigenfunction associated to $\lambda_p(r)$ of $\mathcal{P}$ in $\mathcal{O}_r$, we define the function :
\begin{equation}
\phi(t,x)=\left\{\begin{array}{ll}
 \eta\chi_r(t,x) &\textrm{$x\in\mathcal{O}_r$}\\
0 & \textrm{otherwise}.\\
\end{array} \right.\nonumber
\end{equation}
For $\eta\in\R$ small enough, one obtains immediately that
$$\partial_t\phi-\Delta\phi-c\partial_1\phi-f(t,x,\phi)=(f_s(t,x,0)+\lambda_p(r))\phi_R-f(t,x,\phi)<0.$$
That is, $\phi$ is a subsolution of Eq. (\ref{12}) while the constant $S$ given in (\ref{9}) is a super solution of Eq. (\ref{12}).     Let us consider the solution $u$ of Eq. (\ref{12}) with the initial condition $u(0,x)=\phi(0,x)$. The standard parabolic theory and maximum principle imply that there exists such a solution for any $t>0$ and satisfies $\phi(t,x)\leq u(t,x)\leq S$, $\forall (t,x)\in\R\times\R^{N+1}$. In particular, $\phi(T,x)\leq u(T,x)$, where $T$ is the period of $\phi$ and $f$ with respect to $t$. Consider the function $u(t+T,x)$; it is also a solution of Eq. (\ref{12}) with initial condition $u(T,x)\geq u(0,x)$, then $u(t+T,x)\geq u(t,x)$. In particular, $u(2T,x)\geq u(T,x)$. By induction, one sees that the sequence $u_n(t,x)=u(t+nT,x)$ is nondecreasing in $n$ and uniformly bounded  by $S$. Therefore, $u_n(t,x)$ converges pointwise to a bounded function $U(t,x)$ such that $U$ is T-periodic in $t$, $\phi\leq U\leq S$, $U$ solves Eq. (\ref{12}). The partial periodicity in $y$ of solution follows from the construction.

Let us postpone for a moment the proof of the necessary condition to prove the uniqueness of the solution. We emphasize that the uniqueness of (\ref{12}) is proved to hold in the class of positive bounded solutions without a-priori assuming to be periodic in $y$ nor in $t$.

Assume by contradiction that $\overline{U}$ and $\underline{U}$ are two positive bounded solutions of  (\ref{12}), Theorem 5.2, Appendix, yields
$$\lim_{|x_1|\to\infty}\overline{U}(t,x_1,y)=\lim_{|x_1|\to\infty}\underline{U}(t,x_1,y)=0,$$
uniformly in $y$ and $t$. By condition (\ref{11}), there exists a unique eigenpair $(\lambda,\varphi)$, $\lambda>0$ satisfying 
\begin{equation}
\left\lbrace\begin{array}{ll}
\partial_t\varphi-\Delta\varphi-\gamma(t,y)\varphi=\lambda\varphi\\
\varphi>0\\
\varphi(.,.+T)=\varphi\\
\varphi(.+L_ie_i,.)=\varphi
\end{array}\right.\quad\quad\textrm{in $\R\times\R^{N-1}$, $i\in\{1,N-1\}$}.\label{T4.0}
\end{equation}
Thanks to the periodicity, we have $\inf_{\R^{N}}\varphi(t,y)>0$. For any $\varepsilon>0$, there exists $R(\varepsilon)>0$ such that
\begin{equation}
\underline{U}(t,x_1,y)\leq\varepsilon\varphi(t,y),\quad\quad\forall\, |x_1|\geq R(\varepsilon),y\in\R^{N-1},t\in\R\label{T4.1},
\end{equation}
and therefore, the set 
$$K_\varepsilon:=\{k>0:k\overline{U}\geq\underline{U}-\varepsilon\varphi \textrm{ in } {\R\times\R^N} \}$$
is nonempty. Set $k(\varepsilon):=\inf K_\varepsilon$. Obviously, the function $k(\varepsilon):\mathbb{R}^+\to\R$ is nonincreasing. Assume by way of contradiction that $$k^*=\mathop{\lim}\limits_{\varepsilon\to0^+}k(\varepsilon)>1.$$
Note that $k^*$ could be $\infty$. Take $0<\varepsilon<\sup_{\R^{N+1}} \underline{U}/\varphi$, we see that $k(\varepsilon)>0$, $k(\varepsilon)\overline{U}-\underline{U}+\varepsilon\varphi\geq0$. The definition of $k(\varepsilon)$ yields that there exists a sequence $(t_n^\varepsilon, x^\varepsilon_{1,n},y^\varepsilon_{n})$ in $\R\times\R^N$ such that
\begin{equation}\label{V1_T4.1.0}
\left(k(\varepsilon)-\dfrac{1}{n}\right)\overline{U}(t_n^\varepsilon,x^\varepsilon_{1,n},y^\varepsilon_{n})<\underline{U}(t_n^\varepsilon,x^\varepsilon_{1,n},y^\varepsilon_{n})-\varepsilon\varphi(t_n^\varepsilon,y^\varepsilon_{n}).
\end{equation}
From (\ref{T4.1}), we have $(t_n^\varepsilon, x^\varepsilon_{1,n},y^\varepsilon_n)\in\mathcal{O}_{R(\varepsilon)}$ for $n$ large enough. Taking the sequences $(\tau_n^\varepsilon)$ and $(z_n^\varepsilon)$  such that $t_n^\varepsilon-\tau_n^\varepsilon\in[0,T)$ and $y_n^\varepsilon-z_n^\varepsilon\in[0,L_2)\times...\times[0,L_{N})$. For any $\varepsilon>0$, one sees that $\overline{U}_n^\varepsilon(t,x_1,y)=\overline{U}(t+\tau_n^\varepsilon,x_1,y+z_n^\varepsilon)$ and $\underline{U}_n^\varepsilon(t,x_1,y)=\underline{U}(t+\tau_n^\varepsilon,x_1,y+z_n^\varepsilon)$ are  solutions of the following equation
$$\partial_t U_n^\varepsilon-\Delta U_n^\varepsilon-c\partial_1 U_n^\varepsilon=f(t+\tau_n^\varepsilon,x_1,y+z_n^\varepsilon,U_n^\varepsilon).$$
Using the priori estimates of solutions (Theorem 5.2, Appendix), we deduce that as $n\to\infty$, up to extractions, $\overline{U}_n^\varepsilon\to \overline{U}_\infty^\varepsilon$ and $\underline{U}_n^\varepsilon\to \underline{U}_\infty^\varepsilon$ locally uniformly in $\R^{N+1}$. Moreover, since $f$ is  periodic  in $y$ and T-periodic in $t$, there exist $\tau_\infty^\varepsilon$, $z_\infty^\varepsilon$ such that  $\overline{U}_\infty^\varepsilon$ and $\underline{U}_\infty^\varepsilon$ satisfy, by the standard parabolic estimates, the equation
\begin{equation}
\partial_t U_\infty^\varepsilon-\Delta U_\infty^\varepsilon-c\partial_1 U_\infty^\varepsilon=f(t+\tau_\infty^\varepsilon,x_1,y+z_\infty^\varepsilon,U_\infty^\varepsilon).\nonumber
\end{equation}
One has $W_\infty^\varepsilon=k(\varepsilon)\overline{U}_\infty^\varepsilon-\underline{U}_\infty^\varepsilon+\varepsilon\varphi_\infty^\varepsilon\geq0$, where $\varphi_\infty^\varepsilon(t,y)=\lim_{n\to\infty}\varphi(t+\tau_n^\varepsilon, y+z_n^\varepsilon)$ satisfying
$\partial_t\varphi_\infty^\varepsilon-\Delta\varphi_\infty^\varepsilon-\gamma_\infty^\varepsilon(t,y)\varphi_\infty^\varepsilon=\lambda\varphi_\infty^\varepsilon$,  with $\gamma^\varepsilon_\infty(t,y)=\gamma(t+\tau_\infty^\varepsilon,y+z_\infty^\varepsilon)$, $\phi_\infty^\varepsilon$ is periodic in $y$ and $t$. Moreover, by passing to the limit in (\ref{V1_T4.1.0}), one finds $(t(\varepsilon),x_1(\varepsilon),y(\varepsilon))$ such that
\begin{equation}
(k(\varepsilon)\overline{U}_\infty^\varepsilon-\underline{U}_\infty^\varepsilon+\varepsilon\varphi_\infty^\varepsilon)(t(\varepsilon),x_1(\varepsilon),y(\varepsilon))=0.\label{T4.2}
\end{equation}
Note that $t(\varepsilon)\in [0,T)$ and $y(\varepsilon)\in[0,L_2)\times...\times[0,L_{N})$ are bounded with respect to $\varepsilon$. The case that $\mathop {\lim\inf }\limits_{\varepsilon  \to 0^+}|x_1(\varepsilon)|<\infty$ is ruled out. Indeed, if $\mathop {\lim\inf }\limits_{\varepsilon  \to 0^+}|x_1(\varepsilon)|<\infty$, there exists a sequence $(\varepsilon_n)\to0$ as $n\to\infty$ such that $(t(\varepsilon_n),x_1(\varepsilon_n),y(\varepsilon_n))\to(t_0,x_0,y_0)$ as $n\to\infty$, up to subsequences. Moreover, by the partial periodicity in $y$  and the periodicity in $t$ of $f$, the standard parabolic estimates yield that $\overline{U}^{\varepsilon_n}_\infty\to \overline{U}^0_\infty $ and $\underline{U}^{\varepsilon_n}_\infty\to \underline{U}^0_\infty $ locally uniformly in $\R^{N+1}$ satisfying the following equation
\begin{equation}
\partial_t U_\infty^0-\Delta U_\infty^0-c\partial_1 U_\infty^0=f(t+t_\infty^0,x_1,y+z_\infty^0,U_\infty^0).\nonumber
\end{equation}
for some $t_\infty^0\in[0,T)$ and $z_\infty^0\in[0,L_2)\times...\times[0,L_{N})$. From (\ref{T4.2}) $k^*<\infty$, then  the function $W=k^*\overline{U}_\infty^0-\underline{U}_\infty^0$ is nonnegative and vanishes at $(t_0,x_0,y_0)$. Since $k^*>1$, The Lipschitz continuity of $f$ with respect to $s$ and condition (\ref{10}) yield
\begin{eqnarray}
\partial_t W-\Delta W-c\partial_1 W&\geq& k^*f(t+t_\infty^0,x_1,y+z_\infty^0,\overline{U}_\infty^0)-f(t+t_\infty^0,x_1,y+z_\infty^0,\underline{U}_\infty^0)\nonumber\\
&\geq& f(t+t_\infty^0,x_1,y+z_\infty^0,k^*\overline{U}_\infty^0)-f(t+t_\infty^0,x_1,y+z_\infty^0,\underline{U}_\infty^0)\nonumber\\
&\geq& z(t+t_\infty^0,x_1,y+z_\infty^0)W,\label{T4.21}
\end{eqnarray}
where $z(t,x)\in L^\infty_{\textrm{loc}}(\R\times\R^N)$. Hence, the parabolic strong maximum principle implies $W=0$ in $(-\infty,t^0_\infty)\times\R^N$. This is a contradiction because from  condition (\ref{10}), the last inequality at (\ref{T4.21}) hold strictly in some $D\subset(-\infty,t^0_\infty)\times\R^N$ with $|D|>0$. It remains to consider the case $\mathop {\lim}\limits_{\varepsilon  \to 0^+}|x_1(\varepsilon)|=\infty$. We have shown that $W_\infty^\varepsilon\geq0$ and $W_\infty^\varepsilon$ vanishes at $(t(\varepsilon),x_1(\varepsilon),y(\varepsilon))$, we infer that there exists a neighborhood $\mathcal{O}$ 
of $(t(\varepsilon),x_1(\varepsilon),y(\varepsilon))$ such that $k(\varepsilon)\overline{U}<\underline{U}$ in $\mathcal{O}$, shrinking $\mathcal{O}$ if necessary. Since $k^*>1$, for $\varepsilon$ small enough, $k(\varepsilon)>1$, we derive from (\ref{10}) for $x\in\mathcal{O}$
\begin{eqnarray}
\partial_tW_\infty^\varepsilon&-&\Delta W_\infty^\varepsilon-c\partial_1 W_\infty^\varepsilon\nonumber\\
&\geq& k(\varepsilon)f(t+\tau_\infty^\varepsilon,x_1,y+z_\infty^\varepsilon,\overline{U}_\infty^\varepsilon)-f(t+\tau_\infty^\varepsilon,x_1,y+z_\infty^\varepsilon,\underline{U}_\infty^\varepsilon)+(\gamma(t+\tau_\infty^\varepsilon,y+z_\infty^\varepsilon)+\lambda)\varepsilon\varphi_\infty^\varepsilon\nonumber\\
&\geq&  f(t+\tau_\infty^\varepsilon,x_1,y+z_\infty^\varepsilon,k(\varepsilon)\overline{U}_\infty^\varepsilon)-f(t+\tau_\infty^\varepsilon,x_1,y+z_\infty^\varepsilon,\underline{U}_\infty^\varepsilon)+(\gamma(t+\tau_\infty^\varepsilon,y+z_\infty^\varepsilon)+\lambda)\varepsilon\varphi_\infty^\varepsilon\nonumber\\
&\geq&\dfrac{f(t+\tau_\infty^\varepsilon,x_1,y+z_\infty^\varepsilon,k(\varepsilon)\overline{U}_\infty^\varepsilon)}{k(\varepsilon)\overline{U}_\infty^\varepsilon}(k(\varepsilon)\overline{U}_\infty^\varepsilon-\underline{U}_\infty^\varepsilon+\varepsilon\varphi_\infty^\varepsilon)+\dfrac{\lambda}{2}\varepsilon\varphi_\infty^\varepsilon\nonumber\\
&+&\left(\dfrac{\lambda}{2}+\gamma(t+\tau_\infty^\varepsilon,y+z_\infty^\varepsilon)-\dfrac{f(t+\tau_\infty^\varepsilon,x_1,y+z_\infty^\varepsilon,k(\varepsilon)\overline{U}_\infty^\varepsilon)}{k(\varepsilon)\overline{U}_\infty^\varepsilon}\right)\varepsilon\varphi_\infty^\varepsilon. \label{T4.3}
\end{eqnarray}
Condition  $(\ref{11})$ yields, for $\varepsilon$ small enough, that
$$\dfrac{f(t+\tau_\infty^\varepsilon,x_1,y+z_\infty^\varepsilon,k(\varepsilon)\overline{U}_\infty^\varepsilon)}{k(\varepsilon)\overline{U}_\infty^\varepsilon}<\gamma^\varepsilon_\infty(t,y)+\dfrac{\lambda}{2},\quad\quad\forall(x_1,y)\in\mathcal{O}.$$
Then, it follows from (\ref{T4.3}) that
$$\partial_t W_\infty^\varepsilon-\Delta W_\infty^\varepsilon-c\partial_1 W_\infty^\varepsilon-\varrho(t+\tau_\infty^\varepsilon,x_1,y+z_\infty^\varepsilon)W_\infty^\varepsilon> \dfrac{\lambda}{2}\varepsilon\varphi_\infty^\varepsilon>0\quad\textrm{in $\mathcal{O}$},$$
where $\varrho(t,x)=\frac{f(t,x,k(\varepsilon)\overline{U}_\infty^\varepsilon)}{k(\varepsilon)\overline{U}_\infty^\varepsilon}$ is bounded. This is a contradiction because the strong maximum principle asserts that $W^\varepsilon_\infty(t,x_1,y)=0$ in $\mathcal{O}$. As a consequence, we have proved that $k^*=\mathop{\lim}\limits_{\varepsilon\to0^+}k(\varepsilon)\leq 1$. Therefore
$$\underline{U}\leq\mathop{\lim}\limits_{\varepsilon\to0^+}(k(\varepsilon)\overline{U}+\varepsilon\varphi)\leq \overline{U}\quad\textrm{in $\R^{N+1}$.}$$
We derive the uniqueness by exchanging the role of $\overline{U}$ and $\underline{U}$. Note that we do not use the  periodicity of $y$ and  $t$ of solution in  the proof of uniqueness. This thus implies that any positive bounded solutions of Eq. (\ref{12}) must be  periodic in $y$ and T-periodic in $t$.

To conclude the proof of Theorem \ref{thm:T4}, it only remains to prove the necessary condition. Assume by contradiction that $\tilde{\lambda}_1\geq0$ and  Eq. (\ref{12}) admits a solution $U$, which is T-periodic in $t$ but not necessarily periodic in $y$. Let $\chi$ be a principal eigenfunction associated with $\tilde{\lambda}_1$ (Proposition \ref{pro:p2}) with normalization $\chi(0,0)<U(0,0)$. Then
$$\partial_t\chi-\Delta\chi-c\partial_1\chi-f(t,x,\chi)=\tilde{\lambda}_1\chi+f_s(t,x,0)\chi-f(t,x,\chi)\geq0.$$
Arguing similarly as the proof of uniqueness, we achieve the  contradiction : $U\leq\chi$ in $\R^{N+1}$. 
\end{proof}
Before investigating the long time behavior, we point out that the monotonicity in time of solutions starting by a stationary sub (or super) solution of parabolic operator with time-dependent coefficients no longer holds. In addition, the boundedness of initial datum does not suffice to guarantee that the solutions of Eq. (\ref{8}) converge uniformly to the unique solution of Eq. (\ref{12}) as $t\to\infty$. However, thanks to the periodicity in $t$ of solutions, which obtained by the uniqueness, we will have the locally uniform convergence and under some extra conditions (part (ii), Theorem \ref{thm:T5}) we can actually  derive the uniform convergence as $t\to\infty$.

\begin{proof}[\textbf{Proof of Theorem \ref{thm:T5}}]
Set $S':=\max\{S,\|u_0\|_{L^\infty(\O)}\}$, $S$ is the positive constant given in (\ref{9}). Then, the function $\tilde{u}(t,x)=u(t,x_1+ct,y)$ satisfies $0<\tilde{u}\leq S' $ in $\R^+\times\R^N$ and solves
\begin{equation}
\partial_t\tilde{u}=\Delta\tilde{u}+c\partial_1\tilde{u}+f(t,x,\tilde{u})\quad t>0,x\in\R^N,\label{T5.1}
\end{equation}
with initial condition $\tilde{u}(0,u)=u_0(x)$. Let $w$ be the solution to (\ref{T5.1}) with initial condition $w_0(x)=S'$. Clearly, the constant $S'$ is T-periodic in $t$ and periodic in $y$. Arguing as the proof of Theorem \ref{thm:T4}, we deduce that the sequence $w_n(t,x)=w(t+nT,x)$ is nonincreasing and converges locally uniformly to $W(t,x)$, which is a solution of 
\begin{equation}
\partial_t W-\Delta W-c\partial_1 W-f(t,x,W)=0\quad\textrm{$\forall t>0$, $x\in\R^N$}.\label{T5.2}
\end{equation}
Moreover, $W(t,x)$ is $T$ periodic in $t$ and periodic in $y$. Then
\begin{equation}
\forall r>0,\quad\lim_{t\to\infty}\sup_{x\in\mathcal{O}_r}(\tilde{u}(t,x)-W(t,x))\leq\lim_{t\to\infty}\sup_{x\in\mathcal{O}_r}(w(t,x)-W(t,x))=0.\nonumber
\end{equation}
The following claim holds true

\textbf{Claim.}\begin{equation}
\lim_{\min(t,|x_1|)\to\infty}\tilde{u}(t,x_1,y)=0\quad\textrm{uniformly in $y\in\R^{N-1}$.}\label{T5.3}
\end{equation}
Let us postpone for a moment the proof of claim to proceed the proof.

If $\tilde{\lambda}_1\geq0$, then $W\equiv0$ in $\R\times\R^N$. Therefore, $\tilde{u}(t,x)\to0$ as $t\to\infty$ locally uniformly with respect to $x\in\R^N$. This convergence is uniform in $x\in\R^N$ due to (\ref{T5.3}).

If $\tilde{\lambda}_1<0$. From Propositions \ref{pro:p2}, there exists $\rho>0$ such that $\lambda_{p}(\rho)<0$. Let $\chi_\rho(t,x)$ is an associated principal eigenfunction to $\lambda_{p}(\rho)$,  for $k>0$ small enough, one sees that the function
 \begin{equation}
V(t,x)=
\left\{\begin{array}{ll}
k\chi_\rho(t,x) &\textrm{ $x\in \mathcal{O}_\rho$}\\
0 & \textrm{otherwise}\\
\end{array} \right.\nonumber
\end{equation}
is a subsolution of Eq. (\ref{T5.2}). Then if (\ref{T5.0}) holds, one can choose $k>0$ in such the way $\inf_{\mathcal{O}_\rho}u_0(x)>k\sup\chi_\rho(0,x)>0$, namely $V(0,x)\leq u_0(x)$. Alternatively, $u_0(x)$ is  periodic in $y$, then $\tilde{u}(t,x)$ is strictly positive,  periodic in $y$ and T-periodic in $t$, then the parabolic strong maximum principle yields $V(T,x)\leq\tilde{u}(T,x)$. In both case, we always can define $\tilde{v}(t,x)$ is such that  $\tilde{v}(0,x)=V(0,x)$ or $\tilde{v}(T,x)=V(T,x)$. It follows immediately by parabolic maximum principle that $\tilde{v}(t,x)\leq u(t,x)$, $\forall t>T,x\in\R^N$. Arguing similarly as the proof of Theorem \ref{thm:T4}, we deduce that the sequence $v_n(t,x)=\tilde{v}(t+nT,x)$ is nondecreasing and converges locally uniformly to $P(t,x)$, which is a solution of 
\begin{equation}
\partial_t P-\Delta P-c\partial_1 P-f(t,x,P)=0\quad\textrm{$\forall t>0$, $x\in\R^N$}.\nonumber
\end{equation}
Moreover, $P$ is strictly positive,  periodic in $y$ and T-periodic in $t$. The uniqueness of Theorem \ref{thm:T4} implies that $W=P=U$ in $\R^{N+1}$, which is a solution of Eq. (\ref{12}). Assume by contradiction that this convergence is not uniform in $x$, this means that there exist $\varepsilon>0$ and  a sequence $(t_n,x_{1,n},y_n)\in\R^+\times\R^N$ such that 
\begin{equation}\label{T5.4}
\lim_{n\to\infty}t_n=\infty,\quad\forall n\in\N,\quad |\tilde{u}(t_n,x_{1,n},y_n)-U(x_{1,n},y_n)|\geq\varepsilon. 
\end{equation}
Due to the locally uniform convergence of $\tilde{u}$, necessarily, the sequence $(x_{1,n})$ is unbounded. We get, by the a priori estimates of $U$, that $U(x_{1,n},y_n)\to0$ as ${n\to\infty}$, uniformly in $y$. But from (\ref{T5.4}), this inference contradicts  the claim (\ref{T5.3}). Thus,  to conclude the proof, it remains to prove the Claim \ref{T5.3}.

Let us call $(t_n,x_{1,n},y_n)\in\R^+\times\R^N$ be a sequence such that the claim (\ref{T5.3}) is not true :
$$\lim_{n\to\infty}t_n=\lim_{n\to\infty}x_{1,n}=\infty,\quad\forall n\in\N,\quad \liminf_{n\to\infty}\tilde{u}(t_n,x_{1,n},y_n)\geq\varepsilon\quad\textrm{for some $\varepsilon>0$}.$$
For any $n\in\mathbb{N}$, we define the functions $\tilde{u}_n(t,x)=\tilde{u}(t+t_n,x_1+x_{1,n},y+y_n)$. It holds that $0\leq \tilde{u}_n\leq S'$ and
$$\partial_t\tilde{u}_n=\Delta\tilde{u}_n+c\partial_1\tilde{u}_n+f(t+t_n,x_1+x_{1,n},y+y_n,\tilde{u}_n)\quad\quad\textrm{$t>-t_n$, $x\in\R^N$}.$$
Since $f$ is  periodic in $y$ and T-periodic in $t$, we can assume without loss of generality that $t_n\to t_0$ and $y_n\to y_0$ (up to a subsequence) as $n\to\infty$. Thanks to (\ref{10})-(\ref{11}), we deduce, by the parabolic estimates and embedding theorems   that $\tilde{u}_n$ converges (up to a subsequence) to $\tilde{u}_\infty$ locally uniformly in $\R\times\R^N$  satisfying 
$$\partial_t\tilde{u}_\infty\leq\Delta\tilde{u}_\infty+c\partial_1\tilde{u}_\infty+\gamma(t+t_0,y+y_0)\tilde{u}_\infty\quad\quad\textrm{$\forall t\in\R$, $x\in\R^N$}$$
and $\tilde{u}_\infty(0,0)\geq\varepsilon$. Moreover,  condition (\ref{11}) implies that there exists an  eigenpair $(\lambda,\varphi)$, $\lambda>0$ verifying (\ref{T4.0}). Now, if one sets $\gamma^0(t,y)=\gamma(t+t_0,y+y_0)$, $\varphi^0(t,y)=\varphi(t+t_0,y+y_0)$ and $\upsilon^0(t,x)=S'\varphi^0(t,y)e^{-\lambda(t-h)}$, we have $\partial_t \varphi^0-\Delta\varphi^0-\gamma^0\varphi^0=\lambda\varphi^0$ for $(t,y)\in\R^N$ and
$$\partial_t\upsilon^0=\Delta\upsilon^0+c\partial_1\upsilon^0+\gamma^0\upsilon^0\quad\quad\textrm{$(t,x)\in\R^{N+1}$}.$$
Let $\mathcal{\widetilde{L}}=\partial_t-\Delta-c\partial_1-\gamma^0$, $W^0(t,x)=\upsilon^0-\tilde{u}_\infty$ and $W^0(t,x)=z(t,x)\varphi^0(t,y)$. One has

$$0\leq\dfrac{\mathcal{\widetilde{L}}W}{\varphi^0}=z_t-\Delta z-\dfrac{\nabla_y\varphi^0}{\varphi^0}\cdot \nabla_y z-c\partial_1z+\lambda z.$$
The periodicity yields $\inf_{\R^{N}}\varphi^0(t,y)>0$. Since $\tilde{u}\leq S$, one can enlarge $S'$  so that $z(h,x)\geq0$ in $\R^N$. The parabolic comparison principle yields $z(t,x)\geq0$, $\forall t\leq h,x\in\R^N$. As a consequence
$$\varepsilon\leq \tilde{u}(0,0)\leq \upsilon^0(0,0)=S'\varphi^0(0,0)e^{\lambda h}=S'\varphi(t_0,y_0)e^{\lambda h}.$$
Letting $h$ to $-\infty$, we get a contradiction and this concludes the proof.

\end{proof}

\section{Further results  and applications}

\subsection{Similarity of the problem with Dirichlet boundary condition}
In this subsection, we aim at proving an analogous result of Theorem \ref{thm:T1}, where  Dirichlet instead of Neumann condition is imposed on the boundary of $\O$. This is  to prepare for the main goal in the next subsection. Let us first define the  generalized Dirichlet  principal eigenvalue 
\begin{eqnarray}
\lambda_{D}(-\mathcal{L},\O):=\sup\{\lambda\in\R:\exists\phi\in W^{2,N}_{loc}(\O),\phi>0,(\mathcal{L}+\lambda)\phi\leq0\textrm{ a.e in $\O$ and $\phi=0$ on $\partial\O$}\}.\label{add1.3}
\end{eqnarray}
Note that, if $\O$ is bounded, $\lambda_D$ coincides with the classical Dirichlet eigenvalue (see \cite{BNV}). Similar to  assumption (\ref{6}), we assume that there exists a measurable bounded function $\mu:\omega\to\R$  such that
\begin{equation}
\mu(y)=\mathop {\limsup}\limits_{|x_1 | \to \infty } f_s(x_1,y,0)\quad\quad\textrm{and}\quad\quad\lambda_D(-\Delta_y-\mu(y),\omega)>0.\label{4.1.0}
\end{equation}
Then, we obtain an analogue of Theorem \ref{thm:T1} as following

\begin{theorem}\label{add1_0}
Assume that conditions (\ref{4})-(\ref{5}) and (\ref{4.1.0}) are satisfied. The equation 
\begin{equation}
\left\{\begin{array}{ll}
  \Delta U+c\partial_1 U+f(x,U)=0 &\textrm{in $\O$}\\
   U=0 & \textrm{on $\partial\O$}\\
  \textrm{$U>0$ in $\O$}\\
  \textrm{$U$ is bounded.} 
\end{array} \right.\label{4.1.1}
\end{equation}
admits a solution if and only if $0\leq c<c^*$, where $c^*$ is the critical  speed given by
\begin{equation}\nonumber
c^*:=2\sqrt{-\lambda_D(-\Delta-f_s(x,0),\O)}, \quad\quad\textrm{if $\lambda_D(-\Delta-f_s(x,0),\O)<0$}.
\end{equation}
Moreover, the solution is unique when it exists.
\end{theorem}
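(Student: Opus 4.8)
The plan is to transplant the proof of \thm{T1} to the Dirichlet setting, the only genuinely new ingredient being the treatment of $\partial\O$ in the comparison principle, where the relevant eigenfunctions degenerate. First, mirroring the Liouville computation of Proposition \ref{pro:p1} --- applied now to positive $W^{2,N}_{\loc}$ test functions vanishing on $\partial\O$, a class which the substitution $V=Ue^{\frac{c}{2}x_1}$ maps bijectively onto itself --- one gets $\lambda_D(-\Delta-c\partial_1-f_s(x,0),\O)=\lambda_0^D+c^2/4$ with $\lambda_0^D:=\lambda_D(-\Delta-f_s(x,0),\O)$, so this eigenvalue is $<0$ precisely when $0\le c<c^*$. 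It then remains to prove existence of a front for $0\le c<c^*$, and a comparison principle between sub- and supersolutions of (\ref{4.1.1}), which yields both uniqueness and nonexistence for $c\ge c^*$.

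The comparison principle rests on an exponential decay estimate, which I would obtain by rerunning the barrier construction in the proof of Proposition \ref{lem0} with the Neumann eigenpair of $-\Delta_y-\mu(y)$ on $\omega$ replaced by the \emph{Dirichlet} one: let $\varphi>0$ solve $\Delta\varphi+\mu(y)\varphi+\lambda_\mu^D\varphi=0$ in $\omega$, $\varphi=0$ on $\partial\omega$, where $\lambda_\mu^D:=\lambda_D(-\Delta_y-\mu(y),\omega)>0$ by (\ref{4.1.0}). The barrier $w_p=C_1e^{\alpha|x_1|}\varphi(y)+C_2e^{-\alpha|x_1|}\varphi(y)$, with $\alpha$ below the critical rate $\tfrac{1}{2}\big(-c+\sqrt{c^2+4\lambda_\mu^D}\,\big)$, is a supersolution of $\Delta+c\partial_1+(\mu(y)+\delta)$ outside a large cylinder $\O_R$; on the lateral boundary $\{R<|x_1|<R+p\}\times\partial\omega$ it vanishes, matching the Dirichlet condition of (\ref{4.1.1}), and on the caps one bounds $U(x_1,\cdot)\le C\varphi$ on $\omega$ using that a solution $U$ of (\ref{4.1.1}) is $C^1$ up to $\partial\O$, positive in $\O$, hence vanishes on $\partial\omega$ at the same linear rate as $\varphi$ (Hopf's lemma for $\varphi$, boundary regularity for $U$; cf. \cite{BCN} for the boundary Harnack inequality used here). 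The maximum principle for the conjugated operator on $\O_{R+p}\setminus\O_R$, then $p\to\infty$, give the \emph{weighted} bound
\begin{equation}
U(x_1,y)\le C(\alpha)\,e^{-\alpha|x_1|}\,\varphi(y)\quad\text{in }\O,\qquad 0<\alpha<\tfrac{1}{2}\Big(-c+\sqrt{c^2+4\lambda_\mu^D}\,\Big),\nonumber
\end{equation}
and retaining the factor $\varphi(y)$ is essential now that $\inf_\omega\varphi=0$. By Remark \ref{Remark1} the same decay holds for subsolutions.

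For existence when $0\le c<c^*$, the Dirichlet eigenvalues $\lambda_R$ of $-\Delta-c\partial_1-f_s(x,0)$ on the bounded cylinders $\O_R$ converge monotonically to $\lambda_D(-\Delta-c\partial_1-f_s(x,0),\O)<0$ as $R\to\infty$ (exhaustion of $\O$ by bounded smooth subdomains; see \cite{BR3},\cite{BR5}), so $\lambda_R<0$ for $R$ large; then $\varepsilon\varphi_R$ extended by $0$ is a subsolution of (\ref{4.1.1}) below the supersolution $S$ from (\ref{4}), and the classical monotone iteration yields a solution $U$ with $\varepsilon\varphi_R\le U\le S$, strictly positive in $\O$ by the strong maximum principle. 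For the comparison principle, let $U$ be a supersolution and $V$ a subsolution of (\ref{4.1.1}); the weighted decay gives $V\le\varepsilon\varphi$ for $|x_1|\ge R(\varepsilon)$, so $K_\varepsilon:=\{k>0:kU\ge V-\varepsilon\varphi\text{ in }\overline{\O}\}$ is nonempty --- near $\partial\omega$ all of $U,V,\varphi$ vanish, so the effective constraint on $k$ is interior and is met for $k$ large. Put $k(\varepsilon)=\inf K_\varepsilon$ and assume $k^*:=\lim_{\varepsilon\to0^+}k(\varepsilon)>1$, so $k(\varepsilon)>1$ for $\varepsilon$ small. From the definition of the infimum there are points $p_n\in\O$ with $W^\varepsilon(p_n)<\tfrac1n U(p_n)$, where $W^\varepsilon:=k(\varepsilon)U-V+\varepsilon\varphi\ge0$ in $\O$ and vanishes on $\partial\O$; since $V-\varepsilon\varphi\le0$ outside $\O_{R(\varepsilon)}$ the $p_n$ lie in $\overline{\O}_{R(\varepsilon)}$ and converge (along a subsequence) to a point $(x_1(\varepsilon),y(\varepsilon))\in\overline{\O}$ at which $W^\varepsilon=0$. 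An \emph{interior} contact point $(x_1(\varepsilon),y(\varepsilon))\in\O$ is excluded exactly as in \thm{T1}: by the strong maximum principle together with the strict decrease of $s\mapsto f(x,s)/s$ on the set $D$ of (\ref{5}) when $\liminf_{\varepsilon\to0^+}|x_1(\varepsilon)|<\infty$ (passing to the limit $W=k^*U-V$), and, when $|x_1(\varepsilon)|$ is large, by the strict inequality $-\Delta W^\varepsilon-c\partial_1 W^\varepsilon-\varrho W^\varepsilon>\tfrac{1}{2}\lambda_\mu^D\,\varepsilon\varphi>0$ that (\ref{5}) and (\ref{4.1.0}) produce near that point, $\varrho$ bounded.

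\textbf{The main obstacle is a boundary contact point} $y(\varepsilon)\in\partial\omega$: in the Neumann case it was ruled out because Hopf's lemma ($\partial_\nu W^\varepsilon<0$) clashed with the Neumann condition $\partial_\nu W^\varepsilon=0$, and here $U,V,\varphi$ all vanish on $\partial\O$ so that clash disappears --- but a variant survives. Since $W^\varepsilon\ge0$ vanishes on $\partial\O$, one has $-\partial_\nu W^\varepsilon\ge0$ on $\partial\O$; and $W^\varepsilon>0$ in $\O$ (else we are back in the interior case), so Hopf's lemma for the bounded-coefficient elliptic operator solved by $W^\varepsilon$ gives $-\partial_\nu W^\varepsilon>0$ at every boundary point where $W^\varepsilon=0$. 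On the other hand, dividing $W^\varepsilon(p_n)<\tfrac1n U(p_n)$ by $\dist(p_n,\partial\O)$ and letting $n\to\infty$ (both $W^\varepsilon$ and $U$ being $C^1$ up to $\partial\O$ and vanishing there) forces $-\partial_\nu W^\varepsilon(x_1(\varepsilon),y(\varepsilon))\le0$, hence $=0$ --- a contradiction. Since the contact point can lie neither in $\O$ nor on $\partial\O$, the assumption $k^*>1$ is untenable; therefore $k^*\le1$, and from $k(\varepsilon)U+\varepsilon\varphi\ge V$ we get, letting $\varepsilon\to0^+$, that $V\le k^*U\le U$ in $\O$. Exchanging the roles of $U$ and $V$ gives uniqueness. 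Finally, for $c\ge c^*$ one has $\lambda_D(-\Delta-c\partial_1-f_s(x,0),\O)\ge0$; a normalized generalized Dirichlet principal eigenfunction $\varphi_\infty$ of this operator is, by (\ref{5}), a supersolution of (\ref{4.1.1}), so applying the comparison principle with $\varphi_\infty$ in the supersolution slot and a hypothetical solution $U$ --- which decays by the bound above --- in the subsolution slot gives $U\le\varphi_\infty$ in $\O$, contradicting a normalization $\varphi_\infty(x_0)<U(x_0)$.
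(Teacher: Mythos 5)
Your architecture (Liouville shift of the eigenvalue, existence by iteration from $\varepsilon\varphi_R$ below $S$, a decay estimate feeding a comparison principle) matches the paper's, but at the two delicate steps you carry the Neumann machinery over verbatim, and that is precisely where the paper switches methods and where your argument has gaps. For the decay estimate you weight the barrier by the Dirichlet eigenfunction $\varphi$ of $-\Delta_y-\mu$ on $\omega$, which vanishes on $\partial\omega$: the "maximum principle for the conjugated operator" then involves the drift $2\nabla_y\varphi/\varphi$, singular like $1/\dist(\cdot,\partial\omega)$, and the function $z=(w_p-U)/\varphi$ has indeterminate ($0/0$) values on the lateral boundary $\{R<|x_1|<R+p\}\times\partial\omega$; proving $\liminf z\ge0$ there amounts to comparing $\partial_\nu w_p$ with $\partial_\nu U$, which is not known. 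The paper avoids this by conjugating with a perturbed function $\tilde\phi$ with $\inf_{\ol\omega}\tilde\phi>0$ solving $\Delta_y\tilde\phi+(\tilde\mu+\lambda_0)\tilde\phi=0$ for $\tilde\mu$ close to $\mu$, so the conjugation is nonsingular and the lateral comparison is strict. Your weighted bound is plausibly true (it follows, e.g., from the refined maximum principle of \cite{BNV} on $\O_{R+p}\setminus\O_R$, whose Dirichlet principal eigenvalue is positive), but the step as written does not stand.

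More seriously, the paper explicitly abandons the sliding argument for uniqueness/nonexistence ("the sliding argument \dots does not work in this case due to the lack of the Hopf lemma under Dirichlet condition") and instead derives uniqueness from the variational argument of Theorem 2.3 of \cite{BR1} once exponential decay is known. Your attempt to rescue the sliding method is a genuinely different route, and the normal-derivative ratio trick (dividing $W^\varepsilon(p_n)<\tfrac1n U(p_n)$ by $\dist(p_n,\partial\O)$ to force $-\partial_\nu W^\varepsilon\le0$ at a boundary contact point) is a good idea; but the opposing inequality $-\partial_\nu W^\varepsilon>0$ from Hopf's lemma needs $W^\varepsilon$ to satisfy $-\Delta W^\varepsilon-c\partial_1W^\varepsilon-\varrho W^\varepsilon\ge0$ in a one-sided neighborhood of that point. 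In the Neumann proof this comes from the chain (\ref{T1.2}), whose step $f(x,V)\le\frac{f(x,k(\varepsilon)U)}{k(\varepsilon)U}V$ uses the monotonicity of $s\mapsto f(x,s)/s$ \emph{together with} $k(\varepsilon)U<V$ near the contact point; that last fact is guaranteed there only because $\varepsilon\varphi>0$ at an interior contact point, and it fails at a boundary contact point where $\varepsilon\varphi=0$. So the differential inequality you quote is not established exactly where you need Hopf. (It can be repaired by writing $f(x,k(\varepsilon)U)-f(x,V)=f_s(x,\xi)(k(\varepsilon)U-V)$ and using the uniform $C^1$ regularity of $f$ near $s=0$ together with the decay of $U$ and $V$, but you do not do this.) In the case $\liminf_{\varepsilon\to0^+}|x_1(\varepsilon)|<\infty$ you must likewise transport $-\partial_\nu W^\varepsilon\le0$ to the limit $W=k^*U-V$ at $(x_0,y_0)$ via $C^1$ convergence up to the boundary, which is true but unaddressed. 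Until these points are filled, the comparison principle --- hence uniqueness and nonexistence for $c\ge c^*$ --- is not proved.
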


\begin{proof}The proof of this theorem is essentially  similar to the proof of Theorem \ref{thm:T1}. However, there are significant differences to be outlined here:

i) Existence.

Since the problem is set up with the Dirichlet boundary condition,  Proposition 1, in \cite{BR2}, cannot be applied. However, the Dirichlet boundary condition allows us to use  Theorem 1.9, \cite{BR3} to prove the existence of Eq. (\ref{4.1.1}). Indeed, let us call $\lambda_D=\lambda_D(-\Delta-c\partial_1-f_s(x,0),\O)$, then arguing similarly to Proposition \ref{pro:p1}, we have  $\lambda_D<0$ iff $0\leq c<c^*$.  Let $(\lambda_R,\varphi_R)$ be the Dirichlet principal eigenvalue and eigenfunction of the problem
\begin{eqnarray}
\left\{\begin{array}{ll}
-\Delta\varphi_R-c\partial_1\varphi_R-f_s(x,0)\varphi_R=\lambda_R\varphi_R& x\in\O_R\\
\varphi_R(x_1,y)=0 & |x_1|<R, y\in\partial\omega\\
\varphi_R(\pm R,y)=0& y\in\omega,
\end{array} \right.\nonumber
\end{eqnarray}
we deduce, by Theorem 1.9 in \cite{BR3}, that $\lambda_D=\lim_{R\to\infty}\lambda_R$. Note that Theorem 1.9 in \cite{BR3} also deals with the case of nonsmooth domain as the set $\O_R$ of ours. Then the existence of  Eq. (\ref{4.1.1}) can be obtained in the same way with  Theorem \ref{thm:T1}.

ii) Nonexistence and Uniqueness. 

Let $\tilde{U}(x)=U(x)e^{\frac{c}{2}x_1}$, then $U$ solves  Eq. (\ref{4.1.1}) if and only if  $\tilde{U}$ solves the equation
\begin{equation}
\left\{\begin{array}{ll}
  \Delta \tilde{U}+f(x_1,y,\tilde{U}(x_1,y)e^{-\frac{c}{2}x_1})e^{\frac{c}{2}x_1}-\dfrac{c^2}{4}\tilde{U}=0 &\textrm{$x\in\O$}\\
   \tilde{U}=0 & \textrm{$x\in\partial\O$}\\
  \textrm{$\tilde{U}>0$ in $\O$}\\
  \textrm{$\tilde{U}(x_1,y)e^{-\frac{c}{2}x_1}$ is bounded.} \label{4.1.2}
\end{array} \right.
\end{equation}
The argument of Theorem \ref{thm:T1} for the nonexistence result can be applied if one can prove that $\tilde{U}$ decays exponentially as $|x_1|\to\infty.$ Moreover, if $\tilde{U}$ decays exponentially, the uniqueness can be obtained 
by using  variational argument as Theorem 2.3 in \cite{BR1}. Note that the sliding argument as of Theorem \ref{thm:T1} does not work in this case due to the lack of the Hopf lemma under Dirichlet condition. We end the proof by showing that $\tilde{U}$ really decays exponentially. 

From assumption (\ref{4.1.0}), we know that for any $\delta>0$ there exists $R=R(\delta)>0$ such that $f_s(x_1,y,0)\leq\mu(y)+\delta$ when $|x_1|\geq R$. Since $\omega$ is bounded, $\lambda_D(-\Delta_y-\mu(y),\omega)$ coincides with the classical Dirichlet principal eigenvalue in $\omega$, says $\lambda_0$. There exists an eigenfunction  $\phi\in L^\infty(\omega)$,  associated with $\lambda$, positive in $\omega$, such that 
$$\Delta_y\phi+\mu(y)\phi+\lambda_0\phi=0\quad\textrm{in $\omega$}\quad\quad;\quad\quad\phi=0\quad\textrm{on $\partial\omega$}.$$
One can actually find a function $\tilde{\phi}>0$ in $\overline{\omega}$ such that $\Delta_y\tilde{\phi}+(\tilde{\mu}(y)+\lambda_0)\tilde{\phi}=0$ in $\omega$, with $\tilde{\mu}(y)$ sufficiently close to $\mu(y)$ and  $\sup_{\overline{\omega}}|\tilde{\mu}-\mu|<\tilde{\delta}$, which is chosen later. Set $\mathcal{L}+\delta=\Delta+\mu(y)-c^2/4+\delta$,  conditions (\ref{5}) and (\ref{4.1.0}) yield that for any $\delta>0$ there exists $R=R(\delta)>0$ such that $(\mathcal{L}+\delta) \tilde{U}\geq0$ in $\O\setminus\O_R$. On the other hand, we set $w(x)=C\theta_a(x_1)\tilde{\phi}(y)$, where $\theta_a$ is the solution of equation
\begin{equation}
\left\{\begin{array}{ll}
 \theta''_{a}=(\kappa+\delta)\theta_a &\textrm{in $(R,R+a)$}\\
 \theta_{a}(R)=Ce^{\sqrt{\kappa}R}\\
 \theta_{a}(R+a)=Ce^{\sqrt{\kappa}(R+a)},\nonumber
\end{array} \right.
\end{equation}
with $\kappa=\lambda/2+c^2/4-2\delta$ ($\kappa>c^2/4$ if $\delta<\lambda_0/4$) and $C=\sup_{\O}\tilde{U}(x)e^{-\frac{c}{2}x_1}/\inf_{\overline{\omega}}\tilde{\phi}$. Choosing $\tilde{\delta}<\lambda/2$, direct computation yields $(\mathcal{L}+\delta)w\leq 0$ in $\overline{\O_{R+a}\setminus\O_R}$. The same argumentation of Proposition \ref{lem0} enables us to conclude that $U(x_1,y)\leq Ce^{-(\sqrt{\kappa}-c/2)|x_1|}\tilde{\phi}(y)$ in $\O$. This concludes the proof of theorem.
\end{proof}

By this preliminary, we are ready to present the main result of this section.

\subsection{Concentration of species in the more favorable region}
Based on the characterizations of the persistence and extinction of the species in the cylindrical domain $\O$ under Dirichlet boundary condition, we study the behavior of the species when a part of their habitat changes to be extremely unfavorable. More precisely, we consider  Eq. (\ref{4.1.1}) in the whole space $\R^N$ with two disjointed regions : the cylindrical domain $\O$ as in Subsection 4.1 and its complement $\O^c=\R^N\setminus \O$.  We shall prove that the species concentrate in the more favorable zone, $\O$, and the annihilation occurs in the dead zone $\O^c$ if the death rate in $\O^c$ becomes extremely high. Our goal is to characterize the limit of the sequence  $U_n(x)$, which are solutions of the equations 
\begin{equation}\label{add1.1}
\left\lbrace\begin{array}{ll}
\Delta U_n+c\partial_1 U_n+F_n(x,U_n)=0\quad x\in\R^N,\\
U_n>0 \textrm{ and bounded in $\R^N$}.
\end{array}
\right.
\end{equation}
For any $n$, the nonlinearities $F_n(x,s)$ are assumed to be continuous with respect to $x$ and of class $C^1$ with respect to $s$, $F_n(x,0)=0$, $\forall x\in\R^N$. Moreover,  $F_n(x,s)$ are assumed to satisfy 
\begin{equation}
\textrm{$\exists S>0$ such that $F_n(x,s)\leq 0$ for $s\geq S$,  $\forall x\in \R^N$,}\label{add1.0}
\end{equation}
\begin{eqnarray}
\begin{array}{cc}
&\textrm{$s\rightarrow F_n(x,s)/s $ is nonincreasing a.e in $ \R^N$ and there exist $D\subset\R^N$, $|D|>0$}\\
&\textrm{such that it is strictly decreasing in $D$}.\label{add1.0.0}
\end{array}
\end{eqnarray}
Let $f(x,s):\O\times [0,+\infty)\mapsto\R$ satisfy (\ref{4}), (\ref{5}), (\ref{4.1.0}), $\rho_n(x)=\frac{\partial F_n}{\partial s}(x,0)$, we assume further that:
\begin{equation}
\left\lbrace\begin{array}{ll}
F_n(x,s)=f(x,s)&\textrm{$x\in\O$, $s\in\R^+$ for all $n\in\mathbb{N}$}\\
\textrm{$F_n(x,s)$ and $\rho_n(x)$ are nonincreasing in $n$}&\textrm{$\forall(x,s)\in\overline{\O}\times\R^+$}\\
\textrm{$\rho_n(x)\to-\infty$ as $n\to\infty$}&\textrm{locally uniformly in $\O^c$.}
\label{add1.2}
\end{array}\right.
\end{equation}
Before stating the result, let us briefly explain the meaning of this condition and of our achievement. This condition means that the environment of the species outside $\O$ is unfavorable and it becomes extremely unfavorable as $n\to\infty$. Our result confirms that no species can  persist outside $\O$ under such condition as $n\to\infty$. As is proved in Subsection 4.1, the species  is persistent in $\O$ if and only if $0\leq c<c^*$.  In the following result, we will see that as $n\to\infty$ the species can only persist in $\O$ and it is immediately mortal outside $\O$. Moreover, we will prove that the limit as $n\to\infty$ coincides with the unique solution of Eq. (\ref{4.1.1}) in $\O$ and zero in $\O^c$.

We derive the following result :
\begin{theorem}\label{add1} Let $U_n(x)$ be the sequence  of traveling front solution of Eq. (\ref{add1.1}) with $F_n(x,s)$ satisfies (\ref{add1.0})-(\ref{add1.2}).  Assume that  $c<c^*$ with $c^*$ is given in Theorem (\ref{add1_0}), then the following limit holds 
$$U_n(x)\to U_\infty(x)\quad\textrm{as $n\to\infty$},$$
uniformly for $x\in\R^N$, where $U_\infty\in W^{2,N}(\R^N)$ is nonnegative, vanishing in $\O^c$ and coincides with the unique positive solution of the following equation
\begin{equation}
\left\lbrace
\begin{array}{ll}
\Delta U+c\partial_1 U+f(x,U)=0&x\in\O\\
U(x)=0&x\in\partial\O.
\end{array}\right.\label{add1.4}
\end{equation} 
\end{theorem}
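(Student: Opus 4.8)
The plan is to establish the convergence $U_n\to U_\infty$ by combining monotonicity of the sequence, local compactness, and the uniqueness results of Theorem \ref{add1_0}. First I would observe that the sequence $(U_n)$ is monotone nonincreasing in $n$: since $F_n(x,s)$ is nonincreasing in $n$, if $U_{n+1}$ were a supersolution of the equation for $F_n$ it would dominate $U_n$; more precisely one compares $U_n$ and $U_{n+1}$ directly. Indeed $U_{n+1}$ solves $\Delta U_{n+1}+c\partial_1 U_{n+1}+F_{n+1}(x,U_{n+1})=0$, and since $F_n\geq F_{n+1}$ pointwise, $U_{n+1}$ is a subsolution for the $F_n$-equation, while any $U_n$ built by the iteration scheme from the maximal supersolution $S$ will satisfy $U_n\geq U_{n+1}$ by the comparison principle of Theorem \ref{add1_0} (uniqueness forces $U_n$ to be \emph{the} solution, so this monotonicity is unconditional). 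Thus $U_n(x)\downarrow U_\infty(x)$ pointwise, with $0\leq U_\infty\leq S$. By interior elliptic $L^p$ and Schauder estimates, uniformly in $n$ on compact subsets of $\R^N$, one upgrades this to $U_n\to U_\infty$ in $C^2_{loc}(\R^N)$, so $U_\infty\in W^{2,N}_{loc}(\R^N)$ and, away from $\partial\O$, $U_\infty$ solves the limiting equation (in $\O$ with reaction $f$, since $F_n=f$ there for all $n$).

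Next I would show $U_\infty\equiv 0$ in $\O^c$. Fix a compact $K\subset \O^c$; by (\ref{add1.2}), $\rho_n(x)=\partial_sF_n(x,0)\to-\infty$ uniformly on $K$, and since $s\mapsto F_n(x,s)/s$ is nonincreasing, $F_n(x,s)\leq \rho_n(x)s$. Hence on $K$, $U_n$ satisfies $\Delta U_n+c\partial_1 U_n+\rho_n(x)U_n\geq 0$ with $\rho_n\to-\infty$. Comparing with a fixed supersolution — e.g. using the first eigenfunction of $-\Delta-c\partial_1$ on a ball slightly larger than $K$ with zero-order coefficient $-M_n$ where $M_n:=\inf_K(-\rho_n)\to\infty$, and using the boundary bound $U_n\leq S$ — forces $\sup_K U_n\to 0$. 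Therefore $U_\infty\equiv 0$ on $\O^c$, and by continuity $U_\infty=0$ on $\partial\O$. Combined with the previous step, $U_\infty$ restricted to $\O$ is a nonnegative bounded solution of (\ref{add1.4}).

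It remains to identify $U_\infty|_\O$ with the unique positive solution of (\ref{add1.4}) and to upgrade to \emph{uniform} convergence on $\R^N$. For the identification: since $c<c^*$, Theorem \ref{add1_0} guarantees a unique positive solution $\overline U$ of (\ref{add1.4}); I must rule out $U_\infty\equiv 0$ in $\O$. For this I exhibit, uniformly in $n$, a fixed subsolution from below. Take $R$ large with $\lambda_R<0$ for the Dirichlet eigenvalue problem in $\O_R$ (as in the proof of Theorem \ref{add1_0}); since $F_n=f$ on $\O_R\subset\O$, a small multiple $\e\phi$ of the extended principal eigenfunction is a subsolution of the $F_n$-equation for every $n$, so $U_n\geq \e\phi$ on $\O_R$, hence $U_\infty\geq\e\phi>0$ there. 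Thus $U_\infty|_\O$ is a positive bounded solution and by uniqueness equals $\overline U$. For uniform convergence: by Dini's theorem the monotone convergence $U_n\downarrow U_\infty$ is uniform on compacts; the "tails" $|x_1|\to\infty$ inside $\O$ are controlled because, by Proposition \ref{lem0}-type exponential decay (whose proof via (\ref{4.1.0}) applies uniformly in $n$ since $F_n\le f$ there and the decay rate depends only on $\mu$, $c$), both $U_n$ and $\overline U$ are bounded by $C e^{-\alpha|x_1|}$ with $C,\alpha$ independent of $n$; and the part in $\O^c$ far from $\partial\O$ is handled by the same eigenfunction-barrier argument giving a uniform (in $n$) exponential decay of $U_n$ into $\O^c$. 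Assembling these three regions gives $\|U_n-U_\infty\|_{L^\infty(\R^N)}\to 0$.

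\textbf{Main obstacle.} I expect the delicate point to be proving $U_\infty$ does not vanish identically in $\O$ together with making the exponential-decay estimates \emph{uniform in $n$} — one must check that the supersolutions used in the Proposition \ref{lem0} argument and the eigenfunction barriers in $\O^c$ can be chosen with constants independent of $n$, so that the three-region decomposition yields genuine uniform convergence rather than merely locally uniform convergence.
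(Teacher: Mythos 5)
Your proposal follows the same skeleton as the paper: monotone decrease of $U_n$ via comparison with $F_n\geq F_{n+1}$, local elliptic estimates to pass to the limit in $\O$, identification of the limit through the uniqueness statement of Theorem \ref{add1_0}, and a three-region argument (Dini on compacts, uniform exponential tails) for uniform convergence. Two points where you genuinely diverge: (i) to kill $U_\infty$ in $\O^c$ you use a pointwise barrier with zero-order coefficient $-M_n\to-\infty$, whereas the paper multiplies the transformed equation for $V_n=U_ne^{\frac{c}{2}x_1}$ by $V_n$, integrates (the exponential decay justifies this), gets a uniform bound on $\int_{\R^N}|\nabla V_n|^2$ and deduces $\int_KV_n^2\to0$ from $-\max_K\rho_n\to+\infty$; your barrier route is a valid and arguably more elementary alternative on compact subsets of the interior of $\O^c$. (ii) You explicitly rule out $U_\infty\equiv0$ in $\O$ with the compactly supported subsolution $\e\phi$, a step the paper leaves implicit. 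You omit the paper's Lemma \ref{lem_add1} ($\lambda_n\nearrow\lambda_D$), which the paper uses to rederive existence and uniqueness of each $U_n$ from \cite{BR1}; since existence is part of the hypothesis this is tolerable, but note that the comparison principle giving both the uniqueness of $U_n$ and the monotonicity $U_{n+1}\leq U_n$ is the whole-space one of \cite{BR1} (which itself requires the exponential decay of $V_n$ both inside and outside $\O$), not the one of Theorem \ref{add1_0}.

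The genuine gap is the Dirichlet boundary condition for $U_\infty$. Your step ``by continuity $U_\infty=0$ on $\partial\O$'' does not follow from what precedes it: the barrier argument only reaches compact sets contained in the interior of $\O^c$ (a ball slightly larger than a $K$ touching $\partial\O$ protrudes into $\O$, where $\rho_n$ stays bounded), and $U_\infty$, being a decreasing limit of continuous functions, is a priori only upper semicontinuous --- which gives $U_\infty(x_0)\geq\limsup_{x\to x_0}U_\infty(x)$, the wrong inequality for transferring the vanishing from $\mathrm{int}(\O^c)$ to $\partial\O$. Since the coefficients $\rho_n$ blow up near $\partial\O$, there is no uniform-in-$n$ modulus of continuity across the boundary, so a boundary layer cannot be excluded by soft arguments; yet $U_\infty=0$ on $\partial\O$ is precisely what you must know to invoke the uniqueness of Theorem \ref{add1_0}. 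The paper closes this by the energy estimate: the uniform $H^1(\R^N)$ bound on $V_n$ yields a weak limit $V_\infty\in H^1(\R^N)$ vanishing a.e.\ in $\O^c$, hence $V_\infty|_\O\in H^1_0(\O)$, so the limit solves (\ref{add1.4}) with the boundary condition in the trace sense and the uniqueness applies to weak $H^1_0$ solutions. To repair your argument you should either add this $H^1$ estimate or construct barriers centered at points of $\partial\O$ in the spirit of \cite{GH}.
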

\begin{proof} Some arguments in the proof are inspired from \cite{GH}. 

Let us call $\mathcal{L}_n=\Delta+c\partial_1+\rho_n(x)$, defined in $\R^N$, and the generalized principal eigenvalue of $\mathcal{L}_n$ as following
 \begin{eqnarray}
\lambda_n=\sup\{\lambda\in\R:\exists\phi\in W^{2,N}_{loc}(\R^N),\phi>0,(\mathcal{L}_n+\lambda)\phi\leq0\textrm{ a.e in $\R^N$}\}.\nonumber
\end{eqnarray}
We also denote by $\lambda_D$ the generalized Dirichlet  principal eigenvalue of the operator $\Delta+c\partial_1+f_s(x,0)$ in $\O$ defined by (\ref{add1.3}). We have the following lemma:\\

\begin{lemma}\label{lem_add1} There holds that $\lambda_n$ converges  increasingly to $\lambda_D$. \\
\end{lemma}

Let us postpone the proof of this lemma for a moment to continue the proof of theorem. As is known, $c<c^*$ if and only if $\lambda_D<0$. By Lemma \ref{lem_add1} and assumption, we have $\lambda_n<\lambda_D<0$. Then Theorem 1.1 \cite{BR1} yields that the equation
\begin{equation}
\left\{\begin{array}{ll}
\Delta U_n+c\partial_1U_n+F_n(x,U_n)=0&x\in\R^N\\
0\leq U_n\leq {S}&x\in\R^N.
\end{array}\right.\label{add1.5}
\end{equation}
admits a strictly positive solution $U_n$. Let $V_n(x_1,y)=U_n(x_1,y)e^{\frac{c}{2}x_1}$, then $U_n$ is a solution of Eq. (\ref{add1.5}) if and only if $V_n$ is a solution of 
\begin{equation}
\left\{\begin{array}{ll}
  \Delta V_n+F_n(x_1,y,V_n(x_1,y)e^{-\frac{c}{2}x_1})e^{\frac{c}{2}x_1}-\dfrac{c^2}{4}V_n=0 &\textrm{$x\in\R^N$}\\
  \textrm{$V_n(x_1,y)e^{-\frac{c}{2}x_1}$ is bounded.} \label{add1.6}
\end{array} \right.
\end{equation}
For large $n\in\mathbb{N}$, one has $\mathop{\limsup}\limits_{|x|\in\O^c,|x|\to\infty}\rho_n(x)<0$,  Proposition 4, \cite{BR1} yields that $V_n(x)$ decays exponentially for $x\in\R^N\setminus\O$ and since  $F_n(x,s)=f(x,s)$ satisfies  condition (\ref{4.1.0}) in $\O$, Theorem \ref{add1_0} yields that $V_n(x)$ also decays exponentially for $x\in\O$. Thanks to condition (\ref{add1.0.0}), we derive, by  Theorem 1.1 of \cite{BR1}, that $U_n(x)$ is unique. Moreover, since $F_n$ is nonincreasing, for $m,k\in\mathbb{N}$, $k\geq m$, one has 
$$\Delta U_m+c\partial_1 U_m+F_k(x,U_m)=-F_m(x,U_m)+F_k(x,U_m)\leq0,\quad x\in\R^N.$$
Thus $U_m$ is a supersolution of equation satisfied by $U_k$. One can apply the comparison principle, Theorem 2.3 \cite{BR1}, to imply that $U_k\leq U_m$ in $\R^N$ for $k\geq m$. Then $U_n$ is nonincreasing with respect to $n$ and converges pointwise to a nonnegative function $U_\infty\leq {S}$. We will prove now that $U_\infty=0$ in ${\O}^c.$

From above arguments, $V_n(x)$ decays exponentially as $|x|\to\infty$. Multiplying $V_n(x)$ to Eq. (\ref{add1.6}), we derive, by applying the Stokes formula, that
\begin{eqnarray}
\int_{\R^N}\nabla V_n\cdot\nabla V_n=\int_{\R^N}F_n(x_1,y,V_ne^{-\frac{c}{2}x_1})e^{\frac{c}{2}x_1}V_n-\dfrac{c^2}{4}V_n^2\leq \int_{\R^N}F_0(x_1,y,V_ne^{-\frac{c}{2}x_1})e^{\frac{c}{2}x_1}V_n\leq\nonumber\\
\leq \mathop{\max}\limits_{x\in\R^N}\partial_s F_0(x,0)\int_{\R^N}V_0^2(x)\leq  M<\infty.\nonumber
\end{eqnarray}
This implies, by Lesbesgue monotone convergence theorem, that the sequence $V_n$ converges monotonically to some $V_\infty\in H^1(\R^N)$ as $n\to\infty$, weakly in $H^1(\R^N)$ and strongly in $L^2(\R^N)$. Moreover, taking an arbitrary compact set $K\subset\O^c$, one gets
\begin{eqnarray}
-(\mathop{\max}\limits_{K}\rho_n)\int_{K}V_n^2\leq-\int_K\rho_nV^2_n\leq-\int_K F_n(x,V_ne^{-\frac{c}{2}x_1})e^{\frac{c}{2}x_1}V_n=\nonumber\\
-\int_{\R^N}|\nabla V_n|^2-\int_{\R^N}\frac{c^2}{4}V_n+\int_{\R^N\setminus K}F_n(x,V_ne^{-\frac{c}{2}x_1})e^{\frac{c}{2}x_1}V_n\leq \int_{\R^N\setminus K}F_n(x,V_ne^{-\frac{c}{2}x_1})e^{\frac{c}{2}x_1}V_n\leq M\nonumber
\end{eqnarray}
Then, from assumption (\ref{add1.2}), we have $\mathop{\max}\limits_{K}\rho_n\to-\infty$, $\forall K\subset \O^c$, whence $V_\infty=0$ for all compact set in $\O^c$. This implies $V_\infty=0$ a.e in $\O^c$ or in the other words $U_\infty=0$ a.e in $\O^c$. As a consequence, the restriction of $U_\infty$ in $\O$ belongs to $H^1_{0}(\O)$. Moreover, since $F_n(x,s)=f(x,s)$ in $\O$, we have 
$$\Delta U_n+c\partial_1 U_n+f(x,U_n)=0\quad\quad x\in\O.$$
The standard elliptic estimates yield that $U_n\to U_\infty$ as $n\to\infty$ locally uniformly in $\O$ and moreover $U_\infty$ is a solution of the same equation in $\O$ in the weak $H^1_0(\O)$ sense. Thanks to Theorem \ref{add1_0}, we know that $U_\infty$ is unique and $U_\infty(x)\to0$ as $|x_1|\to\infty$, uniformly in $y\in\omega$. This convergence is actually uniform in $\R^N$ due to the nonincreasing monotonicity of $U_n$ with respect to $n$.


Lastly, to conclude the proof, it remains to prove Lemma \ref{lem_add1}, $\lambda_n\nearrow \lambda_D$. To this end, we first show that $\lambda_n< \lambda_D$, $\forall n\in\N$. Since $\rho_n(x)$ is nonincreasing in $n$, one sees that $\lambda_n$ is nondecreasing in $n$. Assume by contradiction that $\lambda_n\geq\lambda_D$ for some $n$. Let us denote by $\varphi_n$ and $\varphi$ respectively the principal eigenfunctions associated with $\lambda_n$ and $\lambda_D$, it holds that :
$$\Delta\varphi_n+c\partial_1\varphi_n+f_s(x,0)\varphi_n=-\lambda_n\varphi_n\leq-\lambda_D\varphi_n,\quad x\in\Omega,$$
and $\varphi_n>0$ in $\overline{\O}$. Note that the existence of a positive eigenfunction associated with the generalized principal eigenvalue $\lambda_n$ in unbounded domain is given in \cite{BR3}. Because $\varphi_n$ is a supersolution of equation satisfied by $\varphi$, if there exists $0<\kappa<+\infty$ such that $\kappa\varphi\leq\varphi_n$ in $\O$, one can enlarge $\kappa$ until $\kappa\varphi$ touches $\varphi_n$ from below at some point. The strong maximum principle implies $\kappa\varphi\equiv\varphi_n$ in $\O$, which is impossible because $\varphi=0$ on $\partial\O$. In other words, $\sup\{\kappa\in(0,+\infty],\kappa\varphi\leq\varphi_n \textrm{ in $\O$}\}=+\infty$. This yields another contradiction since $\varphi>0$ in $\O$. As a result, $\lambda_n<\lambda_D$, $\forall n\in\N$. Next, we aim to show the limit $\lim_{n\to\infty}\lambda_n=\lambda_D$. Since $\lambda_n$ is nondecreasing and bounded from above, there exists  ${\lambda}_\infty=\lim_{n\to\infty}\lambda_n\leq\lambda_D$. We shall prove that ${\lambda}_\infty=\lambda_D$.

Observe that, by the transformation $\tilde{\varphi}_n=\varphi_ne^{\frac{c}{2}x_1}$, we see that $\tilde{\varphi}_n$ satisfies the equation
\begin{equation}
\Delta\tilde{\varphi}_n+\rho_n(x)\tilde{\varphi}_n-\frac{c^2}{4}\tilde{\varphi}_n+\lambda_n\tilde{\varphi}_n=0,\quad\quad x\in\R^N.\label{add1.6.1}
\end{equation}
Let us show that $\tilde{\varphi}_n$ decays exponentially. Indeed, as in the proof of Theorem \ref{add1_0}, let $\tilde{\phi}$ be the function such that $\inf_{\overline{\omega}}\tilde{\phi}>0$ and $\Delta_y\tilde{\phi}+\tilde{\mu}(y)\tilde{\phi}=0$ in $\omega$, with $\tilde{\mu}(y)$ sufficiently close to $\mu(y)-\lambda_0$, where  $\lambda_0=\lambda_D(-\Delta_y-\mu(y),\omega)>0$. Then, one sees that, $(\lambda_n,\tilde{\varphi}_n)$ is the principal eigenpair of the operator $\mathcal{\tilde{L}}_n=\Delta+\rho_n(x)-c^2/4$ if and only if $(\lambda_n,\phi_n)$, with $\phi_n=\tilde{\varphi}_n/\tilde{\phi}$, is the principal eigenpair of the following operator
$$\Delta+\frac{2\nabla\tilde{\phi}\cdot\nabla}{\tilde{\phi}}+\rho_n(x)-{\mu}(y)-\lambda_0-\frac{c^2}{4}.$$
By  assumptions (\ref{4.1.0}) and (\ref{add1.2}), one has 
\begin{equation}
\limsup_{|x|\to\infty}\left\lbrace\rho_n(x)-{\mu}(y)-\lambda_0-\frac{c^2}{4}\right\rbrace<0<-\lambda_n.\label{add1.6.21}
\end{equation}
Hence, applying Proposition 1.11 \cite{BR3}, we know that $\lambda_n$ is simple, moreover $\phi_n$ is unique (up to multiplications) and decays exponentially as $|x|\to\infty$. It follows immediately that $\tilde{\varphi}_n$ also decays exponentially.

On the other hand, since $\lambda_n<\lambda_D<0$ and $\rho_n(x)$ is nonincreasing in $n$, one has 
$$\Delta\tilde{\varphi}_n+\rho_0(x)\tilde{\varphi}_n\geq0.$$
From above, $\tilde{\varphi}_n$ is bounded and $\tilde{\varphi}_n$ solves  linear equation (\ref{add1.6.1}), we can normalize  $\tilde{\varphi}_n$ in such the way $\sup_{\R^N}\tilde{\varphi}_n=1$. Thanks to   conditions (\ref{4.1.0}) and $\limsup_{x\in\O^c,|x|\to\infty}\rho_0(x)<0$, the same argumentation of Proposition 8.6 in \cite{BR3} may be applied to derive that there exists an exponential decay function $\overline{\varphi}$  depending only on  $\rho_0(x)$ such that $\tilde{\varphi}_n\leq \overline{\varphi}$ in  $\R^N$. From the equation (\ref{add1.6.1}), one has
\begin{eqnarray}
\int_{\R^N}{|\nabla\tilde{\varphi}_n|}^2\leq \int_{\R^N}\rho_n(x)\tilde{\varphi}_n^2(x)+\lambda_n\int_{\R^N}\tilde{\varphi}_n^2(x)\leq \int_{\R^N}\rho_0(x)\overline{\varphi}^2(x)<\infty\label{add1.6.2}
\end{eqnarray}
This implies that there exists $\varphi_\infty\in H^1(\R^N)$, with $\sup_{\R^N}\varphi_\infty=1$, such that $\tilde{\varphi}_n$ converges up to subsequence to $\varphi_\infty$  weakly in  $H^1(\R^N)$ and strongly in $L^2(K)$ for all compact set $K\subset\R^N$. For any compact set $K\subset\O^c$, we derive from (\ref{add1.6.2})
$$-\max_{K}\rho_n\int_K\tilde{\varphi}_n^2dx\leq-\int_K\rho_n\tilde{\varphi}_n^2dx\leq-\int_{\R^N}{|\nabla\tilde{\varphi}_n|}^2+\int_{\R^N\setminus K}\rho_n\tilde{\varphi}_n^2dx\leq \sup_{\R^N}\rho_0\int_{\R^N}\ol\varphi^2dx.$$
Since, from (\ref{add1.2}) for all $K\subset\O^c$, $-\max_{K}\rho_n\to\infty$ as $n\to\infty$, we have $\varphi_\infty=0$ a.e in $K$ and then a.e in $\O^c$. Lastly, again from (\ref{add1.6.1}), one has
\begin{eqnarray}
\int_{\R^N}{|\nabla\tilde{\varphi}_n|}^2\leq \int_{\R^N}\rho_n(x)\tilde{\varphi}_n^2(x)+\lambda_n\tilde{\varphi}_n^2(x)-\frac{c^2}{4}\tilde{\varphi}_n^2(x)\leq \int_{\R^N}\rho_0(x)\tilde{\varphi}_n^2(x)dx+{\lambda}_\infty\tilde{\varphi}_n^2(x)-\frac{c^2}{4}\tilde{\varphi}_n^2(x).\nonumber
\end{eqnarray}
Since $\tilde{\varphi}_n\leq\bar{\varphi}$,   we derive, by Lebesgue dominated convergence theorem that
$$\int_{\R^N}\rho_0\tilde{\varphi}_n^2(x)dx\to\int_{\R^N}\rho_0{\varphi}_\infty^2(x)dx=\int_{\O}f_s(x,0){\varphi}_\infty^2(x)dx$$
$${\lambda}_\infty\int_{\R^N}\tilde{\varphi}_n^2(x)dx\to{\lambda}_\infty\int_{\O}{\varphi}_\infty^2(x)dx\quad\textrm{and}\quad\frac{c^2}{4}\int_{\R^N}\tilde{\varphi}_n^2(x)dx\to\frac{c^2}{4}\int_{\O}{\varphi}_\infty^2(x)dx$$
Whence, the lower semicontinuity property  yields
\begin{eqnarray}
\int_{\O}{|\nabla{\varphi}_\infty|}^2=\int_{\R^N}{|\nabla{\varphi}_\infty|}^2\leq \liminf_{n\to\infty}\int_{\R^N}{|\nabla\tilde{\varphi}_n|}^2\leq \int_{\O}f_s(x,0){\varphi}_\infty^2(x)+{\lambda}_\infty{\varphi}_\infty^2(x)-\frac{c^2}{4}{\varphi}_\infty^2(x)\label{0.0}
\end{eqnarray}
By the Liouville transformation, $\lambda_D$ is the principal eigenvalue of a self-adjoint operator. We know from \cite{BR3} that it has a variational structure. 
$$\lambda_D=\inf_{w\in C_c^1(\O),w\not\equiv0}\frac{\int_{\O}|\nabla w|^2-f_s(x,0)w^2+\frac{c^2}{4}w^2dx}{\int_{\O}w^2dx}.$$
Since $C_c^1(\O)$ is dense in $H^1_0(\O)$ and $\varphi_\infty\in H^1_0(\O)$, there exists a sequence $w_n\in C_c^1(\O)$  converges to $\varphi_\infty$ strongly in $L^2(\O)$ and $H^1(\O)$. Combining with (\ref{0.0}), we derive
\begin{eqnarray}
\lambda_D\leq \frac{\int_{\O}|\nabla w_n|^2-f_s(x,0)w_n^2+\frac{c^2}{4}w_n^2dx}{\int_{\O}w_n^2dx}\to \frac{\int_{\O}|\nabla\varphi_\infty|^2-f_s(x,0)\varphi_\infty^2+\frac{c^2}{4}\varphi_\infty^2dx}{\int_{\O}\varphi_\infty^2dx}\leq\lambda_\infty.\label{add1.6.22}
\end{eqnarray}
Eventually, we obtain $\lambda_D=\lambda_\infty$. This completes the proof.
\end{proof}
\begin{remark} In the proof, we have proved a result, which is stronger than what we really need. In fact, to obtain the conclusion of Theorem \ref{add1}, one only needs to prove $\lambda_n<\lambda_D$. However, by proving Lemma \ref{lem_add1}, we obtain a more interesting result on the convergence of the eigenvalues and eigenfunctions. This indeed makes  Theorem \ref{add1} more transparent and more interesting.
\end{remark}

The last result is concerned with a further    qualitative property of the  fronts of Eq. (\ref{3}), namely the symmetry breaking in $x_1$ axis. To this aim, the monotonicity and exact asymptotic behavior of the fronts play the crucial role. From  Proposition \ref{lem0}, we know that the fronts $U(x_1,y)$ decay exponentially as $x\to\pm\infty$. Therefore, natural questions may arise, which are the right conditions such that the fronts are monotone when $|x_1|$ large enough and whether they are symmetric in $x_1$ axis.  These questions are addressed in the following by studying the asymptotic behavior of solutions as $x_1\to\pm\infty$.

\subsection{Symmetry breaking of the fronts}

\begin{theorem}\label{thm:add2}
 Let $U$ be a traveling front solution of Eq. (\ref{3}) with $f$ is such that
\begin{equation}
\left\lbrace\begin{array}{cc}
|f_s(x_1,y,0)-\alpha(y)|=O(e^{px_1})\quad\textrm{as $x_1\to-\infty$, and} \quad\lambda_\alpha=\lambda_N(-\Delta_y-\alpha(y),\omega)>0\\
|f_s(x_1,y,0)-\beta(y)|=O(e^{-qx_1})\quad\textrm{as $x_1\to+\infty$, and} \quad\lambda_\beta=\lambda_N(-\Delta_y-\beta(y),\omega)>0
\end{array}\right.\label{add2.1}
\end{equation}
uniformly in $y\in\omega$, for some $\alpha,\beta\in\L^\infty(\omega)$, $p,q>0$. We assume further that  $s\to f(x,s)\in C^{1,r}(0,\delta)$ for some $r,\delta>0$. Then, $U$ is asymmetric if $$\lambda_\beta\neq\lambda_\alpha+c^2-2c\sqrt{\lambda_\alpha+\frac{c^2}{4}},$$ where $c$ is the given forced speed of traveling front.

\end{theorem}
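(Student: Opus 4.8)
The plan is to compute the sharp exponential rate at which $U$ decays as $x_1\to-\infty$ and as $x_1\to+\infty$, and then to observe that a reflection symmetry of $U$ in the $x_1$-variable would force these two rates to coincide — an identity that holds precisely when $\lambda_\beta=\lambda_\alpha+c^2-2c\sqrt{\lambda_\alpha+c^2/4}$.

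\emph{Asymptotics at $-\infty$.} Let $(\lambda_\alpha,\phi_\alpha)$ be the principal Neumann eigenpair of $-\Delta_y-\alpha(y)$ on $\omega$, so $\phi_\alpha>0$ and $\inf_{\ol\omega}\phi_\alpha>0$ by Hopf, and set $\gamma_\alpha:=\frac{-c+\sqrt{c^2+4\lambda_\alpha}}{2}>0$, the positive root of $\gamma^2+c\gamma-\lambda_\alpha=0$. I will show that
$$U(x_1,y)=\bigl(C_\alpha+o(1)\bigr)\,e^{\gamma_\alpha x_1}\phi_\alpha(y)\qquad\text{as }x_1\to-\infty,\ \text{uniformly in }y,$$
for some $C_\alpha>0$; in particular $\lim_{x_1\to-\infty}\ln U(x_1,y)/x_1=\gamma_\alpha$. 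Both bounds come from the super-/subsolution method of Proposition \ref{lem0}, with $\alpha(y)$ replacing $\mu(y)$ at the end $x_1\to-\infty$: conjugating $\mathcal L_0=\Delta+c\partial_1+f_s(x,0)$ by $\phi_\alpha$ makes the zero-order coefficient negative, and for each small $\e>0$ the functions $e^{\gamma_\alpha^\pm(\e)x_1}\phi_\alpha(y)$ with $\gamma_\alpha^\pm(\e)=\frac{-c+\sqrt{c^2+4(\lambda_\alpha\mp\e)}}{2}$ are respectively a super- and a subsolution on a half-cylinder $\{x_1<-R_\e\}$ (here $R_\e$ is chosen via $|f_s(x_1,y,0)-\alpha(y)|\le\e$ for $x_1<-R_\e$); combined with the crude decay of Proposition \ref{lem0} and the uniform positivity of $U$ on the cross-section $\{x_1=-R_\e\}$ (Harnack up to the Neumann boundary, \cite{BCN}), the maximum principle and $\e\downarrow0$ yield $c_2\,e^{(\gamma_\alpha+\e')x_1}\le U(x_1,y)\le c_1\,e^{(\gamma_\alpha-\e')x_1}$ for $x_1\ll0$, hence the log-limit. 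To pin down the separated profile I will then use the rate $|f_s(x_1,y,0)-\alpha(y)|=O(e^{px_1})$ and $f(\cdot,s)\in C^{1,r}$: writing $W:=U e^{-\gamma_\alpha x_1}$, the equation for $W$ is a relatively compact, exponentially small perturbation of the $x_1$-independent self-adjoint problem on the cylinder, so the cylindrical spectral theory of Berestycki--Nirenberg \cite{BN} (together with Harnack) forces $W(x_1,\cdot)\to C_\alpha\phi_\alpha$ as $x_1\to-\infty$, with $C_\alpha>0$ by the strong maximum principle.

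\emph{Asymptotics at $+\infty$ and conclusion.} The symmetric argument — with $(\lambda_\beta,\phi_\beta)$ the principal Neumann eigenpair of $-\Delta_y-\beta(y)$ and $\gamma_\beta:=\frac{c+\sqrt{c^2+4\lambda_\beta}}{2}>0$, the exponent of the decaying mode $e^{-\gamma_\beta x_1}$ (the positive root of $\gamma^2-c\gamma-\lambda_\beta=0$) — gives $U(x_1,y)=(C_\beta+o(1))e^{-\gamma_\beta x_1}\phi_\beta(y)$ as $x_1\to+\infty$ with $C_\beta>0$, hence $\lim_{x_1\to+\infty}\ln U(x_1,y)/x_1=-\gamma_\beta$. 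Since Eq.\ (\ref{3}) is not invariant under $x_1$-translations, symmetry of $U$ means reflection about a hyperplane $\{x_1=a\}$, i.e.\ $U(2a-x_1,y)=U(x_1,y)$ on $\O$ for some $a\in\R$. If this held, letting $x_1\to+\infty$ and applying the $-\infty$ asymptotics to the left-hand side (whose argument $2a-x_1\to-\infty$) and the $+\infty$ asymptotics to the right-hand side would force $\gamma_\alpha=\gamma_\beta$. But $\gamma_\alpha=\gamma_\beta$ reads $\sqrt{c^2+4\lambda_\alpha}-2c=\sqrt{c^2+4\lambda_\beta}$, and squaring (its left side being then nonnegative) gives exactly $\lambda_\beta=\lambda_\alpha+c^2-c\sqrt{c^2+4\lambda_\alpha}=\lambda_\alpha+c^2-2c\sqrt{\lambda_\alpha+c^2/4}$, contrary to hypothesis. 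Therefore $U$ is asymmetric.

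\emph{Main obstacle.} Everything delicate sits in the two asymptotic steps: upgrading the crude bound of Proposition \ref{lem0} to the \emph{exact} rate at each end. The lower bound — that $U$ does not decay strictly faster than the principal mode — is the hard part; it needs a finely tuned subsolution (roughly $e^{\gamma_\alpha x_1}\phi_\alpha(y)(1-Ke^{px_1})$), the uniform positivity of $U$ on cross-sections from Harnack up to the Neumann boundary, and the exponential convergence rate in (\ref{add2.1}) together with the $C^{1,r}$ hypothesis (which makes $f(x,U)-f_s(x,0)U$ a genuinely lower-order term). Identifying the separated limit profile then relies on the cylindrical eigenvalue analysis of \cite{BN}; once both asymptotics are available, the reflection argument is immediate.
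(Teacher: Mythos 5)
Your proposal is correct and follows essentially the same strategy as the paper: establish the exact exponential rates $\gamma_\alpha=\frac{-c+\sqrt{c^2+4\lambda_\alpha}}{2}$ at $-\infty$ and $\gamma_\beta=\frac{c+\sqrt{c^2+4\lambda_\beta}}{2}$ at $+\infty$ (crude barriers from Proposition \ref{lem0} and the Appendix, then the Berestycki--Nirenberg cylindrical asymptotics to pin down the principal mode, using the $O(e^{p x_1})$, $O(e^{-qx_1})$ convergence and the $C^{1,r}$ hypothesis to make the remainder lower order), and conclude that a reflection symmetry would force $\gamma_\alpha=\gamma_\beta$, which is exactly the excluded identity. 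The only cosmetic difference is that the paper extracts the sharp rate via the decomposition $U=u^0+u^*$ of \cite{BN} and a bootstrap on $\tau_0=\sup\{\tau: u^0\leq C_\tau e^{\tau x_1}\}$, whereas you sketch refined sub/supersolutions before invoking the same \cite{BN} machinery for the limit profile.
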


\begin{proof} We investigate at first the precise asymptotic behavior of solution of Eq.(\ref{3}) on the branch $\O^-$, By analogy, we derive also the asymptotic behavior on  the branch $\O^+$. According to Proposition \ref{lem0} and Theorem 5.1, Appendix, for any $\delta>0$, we have shown that
\begin{equation}\label{add2.2}
C_{2,\delta}e^{\kappa_\delta x_1}\leq U(x_1,y)\leq C_{1,\delta}e^{\tau_\delta x_1}\quad\quad\forall (x_1,y)\in\O^-,
\end{equation}
where $\kappa_\delta=\sqrt{{\lambda}_\alpha+\delta+\dfrac{c^2}{4}}-\dfrac{c}{2}$ and  ${\tau}_\delta=\sqrt{{\lambda}_\alpha-\delta+\dfrac{c^2}{4}}-\dfrac{c}{2}$. Since $\omega$ is bounded, we refer to \cite{BNV}, that there exists a unique (up to a multiplication) eigenfunction $\varphi$  with Neumann boundary condition associated to $\lambda_\alpha$:
\begin{equation}
\left\{\begin{array}{ll}
-(\Delta_y+\alpha(y))\varphi={\lambda_\alpha}\varphi &\textrm{in $\omega$},\\
\partial_\nu\varphi(y)=0 &\textrm{on $\partial\omega$}.\nonumber
\end{array} \right.
\end{equation}
We rewrite Eq. (\ref{3}) as in the following form
\begin{equation}
\left\lbrace\begin{array}{ll}
\mathcal{M}U=-\Delta U-c\partial_1 U-\alpha(y) U=H_f(x_1,y)\\
H_f(x_1,y)=f(x_1,y,U)-\alpha(y) U\nonumber
\end{array}
\right.
\end{equation}
By the regularity condition $s\to f(x,s)\in C^{1,r}(0,\delta)$ and condition (\ref{add2.1}), we have
\begin{eqnarray}
|H_f(x_1,y)| &\leq& |f(x_1,y,U)-f_s(x_1,y,0)U|+|f_s(x_1,y,0)U-\alpha(y)U|\nonumber\\
&\leq& C_1 e^{(r+1)\tau_\delta x_1}+C_2e^{(p+\tau_\delta)x_1}\leq C_3e^{m(\tau_\delta)x_1}\quad\textrm{ as $x_1\to-\infty$},\label{add2.3}
\end{eqnarray}
where $m(\tau_\delta)=\min\{(r+1)\tau_\delta,p+\tau_\delta\}$. By Theorem 4.3 of \cite{BN}, we can write $U$ as follow
\begin{equation}
U=u^0(x_1,y)+u^*(x_1,y),\nonumber
\end{equation}
where $(u^0,u^*)$ is a solution of system 
\begin{equation}
\left\{\begin{array}{ll}
\mathcal{M} u^0=0 & x\in\O^-\\
\partial_\nu u^0=0 & \partial \O^-
\end{array} \right.\quad\quad \begin{array}{ll}
\mathcal{M}u^*=H_f(x_1,y)& x\in\O^-\\
\partial_\nu u^*=0 & \partial \O^-.\label{add2.3.0}
\end{array}
\end{equation}
Moreover, $u^0$ has a precisely exponential asymptotic behavior as $x_1\to-\infty$, namely there exist $\lambda>0$ and $\psi(x_1,y)=(-x_1)^k\psi_k(y)+...+\psi_0(y)\not\equiv0$ such that 
\begin{equation}
\left\{\begin{array}{ll}
u^0(x_1,y)=e^{\lambda x_1}\psi(x_1,y)+O(e^{\lambda x_1})\\
\nabla u^0(x_1,y)=\nabla(e^{\lambda x_1}\psi(x_1,y))+O(e^{\lambda x_1}),
\end{array} \right.\label{add2.3.1}
\end{equation}
and for any $\varepsilon>0$, $u^*$ satisfies the inequality  
\begin{equation}
|u^*(x_1,y)|+|\nabla u^*(x_1,y)|\leq C_{\varepsilon,\delta} e^{(m(\tau_\delta)-\varepsilon)x_1}\quad\quad\textrm{for some $C_\varepsilon>0$}. \label{add2.4}
\end{equation}
Let us define
$$\tau_0=\sup\{\tau:\exists C_\tau\textrm{ such that } u^0(x_1,y)\leq C_\tau e^{\tau x_1} \textrm{ in $\O^-$}\}.$$
Inequalities (\ref{add2.2}) yields $\kappa_\delta\leq\tau_0\leq \tau_\delta$, for any $\delta>0$, thus $\tau_0$ is indeed  a real number. We want to prove that $\tau_0=\sqrt{{\lambda}_\alpha+\dfrac{c^2}{4}}-\dfrac{c}{2}$. Taking $\tau<\tau_0$, then $0\leq u^0(x_1,y)\leq C_\tau e^{\tau x_1}$ and moreover  $|\nabla u^0(x_1,y)|\leq C_\tau' e^{\tau x_1} $ by the Harnack inequality.
Proceeding as (\ref{add2.3}), we get : $|H_f(x_1,y)|\leq C_4e^{m(\tau)x_1}$, where $m(\tau)=\min\{(r+1)\tau,p+\tau\}>\tau$. As a result of  (\ref{add2.4}), we have
\begin{equation}
|u^*(x_1,y)|+|\nabla u^*(x_1,y)|\leq D_\tau e^{\frac{(\tau+m(\tau))x_1}{2}}. \nonumber
\end{equation} 
One sees that as $\tau\nearrow \tau_0$,
$\dfrac{\tau+m(\tau)}{2}\nearrow\dfrac{\tau_0+m(\tau_0)}{2}>\tau_0$. Therefore, there exist $\epsilon>0$ and $C_\epsilon>0$ such that\begin{equation}
|u^*(x_1,y)|+|\nabla u^*(x_1,y)|\leq C_\epsilon e^{(\tau_0+\epsilon)x_1}\quad \textrm{in $\O^-$}.\label{add2.5}
\end{equation}
It follows immediately that $\forall\tau<\tau_0$ 
$$|u^0(x_1,y)|\leq |u^0(x_1,y)|+|u^*(x_1,y)|\leq C_\tau e^{\tau x_1}+C_\epsilon e^{(\tau_0+\epsilon)x_1}\leq (C_\tau+C_\epsilon)e^{\tau x_1}\quad \textrm{in $\O^-$}.$$
On the other hand, for $\delta$ small enough
$$u_0(x_1,y)=u^0(x_1,y)-u^*(x_1,y)\geq C_{2,\delta}e^{(\sqrt{{\lambda}_\alpha+\delta+\frac{c^2}{4}}-\frac{c}{2})x_1}-C_\epsilon e^{(\tau_0+\epsilon)x_1}\geq C_{3,\delta}e^{(\sqrt{{\lambda}_\alpha+\delta+\frac{c^2}{4}}-\frac{c}{2})x_1}.$$
Applying Theorem 4.2 of \cite{BN} to $u^0$, we deduce that there is exactly one positive constant $\lambda$ such that $\tau\leq\lambda\leq\sqrt{{\lambda}_\alpha+\delta+\frac{c^2}{4}}-\frac{c}{2}$, $\forall \tau<\tau_0$ and (\ref{add2.3.1}) holds for a suitable exponential solution $w(x_1,y)=e^{\lambda x_1}\psi(y)$. From (\ref{add2.5}), $\lambda$ cannot be strictly bigger than $\tau_0$, therefore we must have $\lambda=\tau_0>0$. Since $u^0>0$, we deduce $\psi_k>0$ and thus  Theorem 2.4 of \cite{BN} yields that  $\psi(y)$ is a solution of
\begin{equation}
\left\{\begin{array}{ll}
-(\Delta_y+\alpha(y))\psi=(\lambda^2+c\lambda)\psi&\textrm{in $\omega$}\\
\psi_\nu=0&\textrm{on $\partial\omega$}.
\end{array} \right.\label{add2.6}
\end{equation}
Since ${\lambda}_\alpha>0$, Theorem 2.1 of \cite{BN} implies that (\ref{add2.6}) possesses exactly one positive principal eigenvalue, that is  $\lambda={\dfrac{-c+\sqrt{c^2+4{\lambda}_\alpha}}{2}}=\tau_0$. We obtain the precisely asymptotic behavior of $U(x_1,y)$ as $x_1\to-\infty$. 

By analogy, we obtain  the precise exponential behavior of $U(x_1,y)$ as $x_1\to+\infty$. It is precisely exponentially asymptotic as $x_1\to+\infty$ with the exponent $\lambda'={\dfrac{-c-\sqrt{c^2+4{\lambda}_\beta}}{2}}.$ As a consequence, we have proved that
$$U(x_1,y)\sim C_1e^{-(\frac{c+\sqrt{c^2+4{\lambda}_\beta}}{2})x_1}\quad\textrm{as $x\to+\infty$};\quad U(x_1,y)\sim C_2e^{(\frac{-c+\sqrt{c^2+4{\lambda}_\alpha}}{2})x_1}\quad\textrm{as $x\to-\infty$},$$
uniformly in $y$. This result, in particular, implies that $U(x_1,y)$ is increasing in  $(-\infty,-R)\times\omega$ and decreasing in $(R_1,\infty)\times\omega$ for $R, R_1$ large enough. To achieve the symmetry in $x_1$, necessarily, we have
$$\frac{c+\sqrt{c^2+4{\lambda}_\beta}}{2}=\frac{-c+\sqrt{c^2+4{\lambda}_\alpha}}{2}\quad\Longleftrightarrow\quad\lambda_\beta=\lambda_\alpha+c^2-2c\sqrt{\lambda_\alpha+\dfrac{c^2}{4}}.$$
In other words $U$ is asymmetric if $\lambda_\beta\neq\lambda_\alpha+c^2-2c\sqrt{\lambda_\alpha+\dfrac{c^2}{4}}$.
\end{proof}
\begin{remark}
we see that  if $c\neq0$,  the asymmetry holds   when $\lambda_\beta=\lambda_\alpha$. The drift term is therefore the main inducement that makes the front asymmetric. However, we do not know that whether the front is symmetric  when 
$$\lambda_\beta=\lambda_\alpha+c^2-2c\sqrt{\lambda_\alpha+\dfrac{c^2}{4}}.$$
The answer of this question requires more involved analysis. We state this as an open question. Our result applies, in particular, to show that the asymmetry holds when the nonlinearity $f$ satisfying  (\ref{add2.1}) becomes as (\ref{0.00new}), namely $\lambda_\beta=\lambda_\alpha=m>0$ and
$$|f_s(x_1,y,0)+m|=O(e^{-p|x_1|})\quad\textrm{as $|x_1|\to\infty$, for some $m,p>0$}.$$

\end{remark}
\begin{remark} We point out that the  assumption on the exponential rate of convergences $f_s(x_1,y,0)\to\alpha(y)$ and $f_s(x_1,y,0)\to\beta(y)$ in (\ref{add2.1}) is important. Indeed, if (\ref{add2.1}) does not hold, the precise exponential behavior of  $u^0$ satisfying Eq. (\ref{add2.3.0})  may not be true as (\ref{add2.3.1}) in general. For instance, in one dimensional space, $\beta(y)\equiv-1/2$, if $f_s(x,0)$ converges slowly to $-1/2$, we can take $w(x)=xe^{- x}$, which is a solution of 
$$w''+\frac{1}{2}w'+g(x)w=0\quad\textrm{in $\R\setminus(2,-\infty)$},\quad\quad g(x)=-\frac{1}{2}+\frac{1}{2x}\to-\frac{1}{2}\quad\textrm{as   $x\to+\infty$}.$$

\end{remark}

\section{Appendix}
\label{appendix}\vspace{1cm}

\begin{theorem}\label{thm:A1} Let $U$ be a traveling front solution of (\ref{3}). Assume that (\ref{5}), (\ref{6}) hold and $f$ is such that
\begin{equation}\label{A1.2}
\liminf_{x_1\to\pm\infty}f_s(x_1,y,0)\geq\alpha_\pm(y)\quad\textrm{and}\quad\lambda_{\alpha_\pm}=\lambda_N(-\Delta_y-\alpha_\pm(y),\omega)>0,
\end{equation} 
for some functions $\alpha_\pm\in\L^\infty(\omega)$. Then, for any $\delta>0$, there exist $A_\pm>0$ and $\tau_{\alpha_\pm}\geq\sqrt{\lambda_{\alpha_\pm}+\delta+\dfrac{c^2}{4}}$ such that
$$U(x_1,y)\geq A_- e^{(\tau_{\alpha_-}-\frac{c}{2}) x_1}\quad\textrm{$\forall(x_1,y)\in\O^- $}\quad\textrm{and}\quad U(x_1,y)\geq A_+e^{-(\tau_{\alpha_+}+\frac{c}{2}) x_1}\quad\textrm{$\forall(x_1,y)\in\O^+ $}.$$
\end{theorem}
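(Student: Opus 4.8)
The plan is to construct, on the far end of each branch of the cylinder, an explicit exponential lower barrier and to compare it with $U$. I describe the branch $x_1\to-\infty$, i.e.\ the estimate on $\O^-$; the one on $\O^+$ is obtained verbatim with the obvious sign changes. Fix $\delta>0$ and write $\Sigma_R:=\{x\in\O:\ x_1<-R\}$. By Proposition \ref{lem0} (valid under (\ref{5})--(\ref{6})) the front $U$ decays exponentially, so $\sup_{\Sigma_R}U\to0$ as $R\to\infty$; combining this with the standing hypothesis that $s\mapsto f(x,s)$ is of class $C^{1}$ near $0$ uniformly in $x$ (which gives $f(x,s)\ge(f_s(x,0)-\eta)s$ for $0<s$ small, any $\eta>0$) and with $\liminf_{x_1\to-\infty}f_s(x_1,y,0)\ge\alpha_-(y)$, I can fix $R=R(\delta)$ so large that $f(x,U(x))\ge(\alpha_-(y)-\delta)U(x)$ in $\Sigma_R$. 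Using the equation,
\[
\mathcal M_0 U:=\Delta U+c\partial_1U+(\alpha_-(y)-\delta)U\ \le\ \Delta U+c\partial_1U+f(x,U)\ =\ 0\qquad\text{in }\Sigma_R,
\]
so $U$ is a positive supersolution of $\mathcal M_0$ in $\Sigma_R$, with $\partial_\nu U=0$ on the lateral boundary and $U>0$ on $\overline{\Sigma_R}$ (strong maximum principle and Hopf lemma).

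For the barrier I take the positive Neumann eigenfunction $\psi\in W^{2,p}(\omega)$ of $-\Delta_y-\alpha_-(y)$ with eigenvalue $\lambda_{\alpha_-}$, so $\inf_{\overline\omega}\psi>0$; set $\tau_{\alpha_-}:=\sqrt{\lambda_{\alpha_-}+\delta+c^2/4}$ (note $\tau_{\alpha_-}>c/2$ since $\lambda_{\alpha_-}+\delta>0$) and
\[
\underline{U}(x_1,y):=A_-\,e^{(\tau_{\alpha_-}-c/2)\,x_1}\,\psi(y).
\]
A one-line computation gives $\mathcal M_0\underline{U}\equiv0$ in $\O$ — the exponent $\tau_{\alpha_-}-c/2$ being exactly the larger root of $\mu^{2}+c\mu-(\lambda_{\alpha_-}+\delta)=0$ — together with $\partial_\nu\underline{U}=0$ on $\partial\O$; and since $U>0$ is continuous on the compact set $\{-R\}\times\overline\omega$, I may choose $A_->0$ small enough that $\underline{U}(-R,y)\le\inf_{y'\in\omega}U(-R,y')$ for all $y$, i.e.\ $\underline{U}\le U$ on $\{x_1=-R\}$.

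The core step is to propagate this into $\Sigma_R$, namely $U\ge\underline{U}$ throughout $\Sigma_R$. Since $\alpha_-(y)-\delta$ may be positive, $\mathcal M_0$ has no sign on its zeroth order term, so I would first conjugate by $\psi$: writing a function as $w=z\,\psi(y)$ turns $\mathcal M_0 w\le0$ into $\Delta z+\tfrac{2\nabla_y\psi}{\psi}\cdot\nabla_y z+c\partial_1 z-(\lambda_{\alpha_-}+\delta)z\le0$, an operator with strictly negative zeroth order coefficient. Applying this to $w=\underline{U}-U$, which satisfies $\mathcal M_0 w\ge0$, is bounded above, is $\le0$ on $\{x_1=-R\}$, and has $\partial_\nu w=0$ on the lateral boundary, one gets $w\le0$: either directly from the refined maximum principle, the generalized principal eigenvalue of $-\mathcal M_0$ on $\Sigma_R$ being $\ge\lambda_{\alpha_-}+\delta+c^2/4>0$ (cf.\ \cite{BR3}); or elementarily, by running the classical maximum principle for the conjugated operator on the bounded slices $\{-T<x_1<-R\}$, applied to $\underline{U}-\varepsilon e^{\theta x_1}\psi-U$, where $0<\theta<\tau_{\alpha_-}-c/2$ lies in the spectral gap so that $\underline{U}-\varepsilon e^{\theta x_1}\psi$ is a strict subsolution of $\mathcal M_0$ that is negative for $x_1\ll0$, and then letting $T\to\infty$ and $\varepsilon\to0$. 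Either way $U\ge\underline{U}\ge A_-(\inf_\omega\psi)\,e^{(\tau_{\alpha_-}-c/2)x_1}$ in $\Sigma_R$; on the bounded piece $\{-R\le x_1\le0\}$, $U$ is bounded below by a positive constant while $e^{(\tau_{\alpha_-}-c/2)x_1}\le1$, so after possibly shrinking $A_-$ the estimate holds on all of $\O^-$. The bound on $\O^+$ follows identically, now with $\psi$ the Neumann eigenfunction of $-\Delta_y-\alpha_+(y)$ and exponent $-(\tau_{\alpha_+}+c/2)$, the larger root of $\nu^{2}-c\nu-(\lambda_{\alpha_+}+\delta)=0$, with $\tau_{\alpha_+}=\sqrt{\lambda_{\alpha_+}+\delta+c^2/4}$.

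I expect the genuine difficulty to be precisely this propagation step: one has to run a maximum principle on an unbounded half-cylinder whose operator carries a zeroth order coefficient of indefinite sign, with mixed Dirichlet--Neumann data meeting at the corner $\{x_1=-R\}\times\partial\omega$. The conjugation $w=z\psi$ disposes of the sign issue, while the positivity of the generalized principal eigenvalue of $\mathcal M_0$ on $\Sigma_R$ — equivalently, the availability of the decaying supersolutions $e^{\theta x_1}\psi$ with $\theta$ in the spectral gap — supplies the control as $x_1\to-\infty$; granting these, the remaining verifications (regularity of $U$, the choice of $R$, the choice of $A_-$) are routine.
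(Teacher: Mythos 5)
Your proposal is correct and follows essentially the same route as the paper: exponential decay of $U$ plus the uniform $C^1$ regularity of $f$ near $s=0$ make $U$ a supersolution of $\Delta+c\partial_1+(\alpha_\pm(y)-\delta)$ far out on each branch, the comparison function is $e^{\mu x_1}\psi(y)$ with $\psi$ the cross-sectional Neumann eigenfunction and $\mu$ the appropriate root of $\mu^2\pm c\mu-(\lambda_{\alpha_\pm}+\delta)=0$, the propagation is done by conjugating by $\psi$ to obtain a strictly negative zeroth-order coefficient and invoking the maximum principle on the half-cylinder (using the decay at infinity), and the compact part $\O_R$ is handled by positivity/Harnack. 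The only cosmetic differences are that you take the barrier to be an exact solution rather than a subsolution and that you spell out the exhaustion-by-slices justification of the maximum principle on the unbounded domain, which the paper leaves implicit.
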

\begin{proof}
Since (\ref{5}), (\ref{6}) hold, from Proposition $(\ref{lem0})$, we know that $U(x_1,y)$ decays  exponentially as $|x_1|\to\infty$. Let us denote  $I=\{\alpha_-,\alpha_+\}$. We know from \cite{BNV} that there exist  principal eigenfunctions $\varphi_i$  associated with $\lambda_i$ such that for $i\in I$
\begin{equation}
\left\{\begin{array}{ll}
  -\Delta \varphi_i-i(y)\varphi_i=\lambda_i\varphi_i &\textrm{in $\omega$}\\
  \partial_\nu \varphi_i=0 & \textrm{on $\partial\omega$}.\nonumber
\end{array} \right.
\end{equation}
For $i\in I$, $\delta>0$, we set $\widetilde{\mathcal{L}}_i=\Delta_x+c\partial_1 +i(y)-\delta$. By assumption (\ref{A1.2}), there exists $R=R(\delta)>0$ such that $\widetilde{\mathcal{L}}_{\alpha_-} U\leq 0$ in $\O^-\setminus\O_{R}$ and $\widetilde{\mathcal{L}}_{\alpha_+} U\leq 0$ in $\O^+\setminus\O_{R}$. Define the functions $$\omega_{\alpha_-}(x)=e^{\tau_{\alpha_-} x_1}\varphi_{\alpha_-}(y) \quad\quad \textrm{and} \quad\quad  \omega_{\alpha_+}(x)=e^{-\tau_{\alpha_+} x_1}\varphi_{\alpha_+}(y),$$ 
direct computation yields
$$\textrm{$\widetilde{\mathcal{L}}_{\alpha_-}\omega_{\alpha_-}=\left( \tau_{\alpha_-}^2+c\tau_{\alpha_-}-\lambda_{\alpha_-}-\delta\right)\omega_{\alpha_-}\geq0$ in $\O^-\setminus\O_{R}$  if $\tau_{\alpha_-}\geq\frac{\sqrt{c^2+4(\lambda_{\alpha_-}+\delta)}-c}{2}$};$$
$$\textrm{$\widetilde{\mathcal{L}}_{\alpha_-}\omega_{\alpha_+}=\left( \tau_{\alpha_+}^2-c\tau_{\alpha_+}-\lambda_{\alpha_+}-\delta\right)\omega_{\alpha_+}\geq0$ in $\O^+\setminus\O_{R}$ if $\tau_{\alpha_+}\geq\frac{\sqrt{c^2+4(\lambda_{\alpha_+}+\delta)}+c}{2}$}.$$
The strong maximum principle and the Hopf lemma yield $\inf_{y\in\omega}\varphi_i(y)>0$, $\inf_{y\in\omega}U(-R,y)>0$ and $\inf_{y\in\omega}U(R,y)>0$. Therefore, we can choose the positive constants $C_{\delta,i}$ small enough such that the functions satisfy $W_{\alpha_-}(x_1,y)=U(x_1,y)-C_{\delta,{\alpha_-}}\omega_{\alpha_-}(x_1,y)\geq 0$  and $W_{\alpha_+}(x_1,y)=U(x_1,y)-C_{\delta,{\alpha_+}}\omega_{\alpha_+}(x_1,y)\geq 0$ for $x_1=-R,y\in\omega$. They have the Neumann boundary conditions $\partial_\nu W_i=0$ on $\partial\O$ and satisfy the  inequalities $\widetilde{\mathcal{L}}_i W_i\leq 0$ in $\O^\pm\setminus\O_R$. Set $z_i(x_1,y)=W_i(x_1,y)/\varphi_i(y)$, we get $\partial_\nu z_i=0$ on $\partial \O$ and
$$0\geq\dfrac{\widetilde{\mathcal{L}}_{\alpha_-} W_{\alpha_-}}{\varphi_{\alpha_-}}=\Delta z_{\alpha_-}+c\partial_1 z_{\alpha_-}+2\dfrac{\nabla_y\varphi_{\alpha_-}}{\varphi_{\alpha_-}} .\nabla_y z_{\alpha_-}-\left(\lambda_{\alpha_-}+\delta\right) z_{\alpha_-}\quad x_1<-R,\forall y\in\omega,$$
$$0\geq\dfrac{\widetilde{\mathcal{L}}_{\alpha_+} W_{\alpha_+}}{\varphi_{\alpha_+}}=\Delta z_{\alpha_+}+c\partial z_{\alpha_+}+2\dfrac{\nabla_y\varphi_{\alpha_+}}{\varphi_{\alpha_+}} .\nabla_y z_{\alpha_+}-\left(\lambda_{\alpha_+}+\delta\right) z_{\alpha_+},\quad x_1>R,\forall y\in\omega.$$
Since $z_{\alpha_-}(x_1,y)\to0$ as $x_1\to-\infty$, $z_{\alpha_+}(x_1,y)\to0$ as $x_1\to+\infty$, and zero-order coefficients of elliptic-operators with respect to $z_i$ are negative, the weak maximum principle is applied to derive $z_i\geq 0$ in $\O^\pm\setminus\O_R$. As a consequence, there exist $\tau_i\geq\sqrt{\lambda_i+\delta+\dfrac{c^2}{4}}$ such that 
\begin{equation}\label{A1.3}
\left\lbrace\begin{array}{ll}
 C_{\delta,{\alpha_+}} e^{-(\tau_{\alpha_+}+\frac{c}{2}) x_1}\varphi_{\alpha_+}(y) \leq U(x_1,y)\quad\textrm{in $\O^+\setminus\O_{R}$.}
\\
 C_{\delta,{\alpha_-}} e^{(\tau_{\alpha_-}-\frac{c}{2}) x_1}\varphi_{\alpha_-}(y) \leq U(x_1,y)\quad\textrm{in $\O^-\setminus\O_{R}$.}
\end{array}\right.
\end{equation}
By the Harnack inequality, one has $\inf_{|x_1|\leq R}U(x_1,y)>0$, we deduce that $C_{\delta,i}$ indeed can be chosen such that the inequalities at (\ref{A1.3}) hold respectively in $\O^\pm$. The proof is complete.
\end{proof}

\begin{theorem}\label{thm:A2}
Let $U\in W^{1,2}_{N+1,loc}(\R\times\R^{N})$ be a solution of (\ref{12}), where $f$ is such that  conditions (\ref{10}),(\ref{11}) hold. Then there exist two positive constants $k$ and $\varepsilon$ such that $$\forall(t,x)\in\R\times\R^N\quad\quad U(t,x_1,y)\leq ke^{-\varepsilon|x_1|}.$$
\end{theorem}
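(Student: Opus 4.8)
The plan is to transpose the barrier construction of Proposition \ref{lem0} to the time‑periodic parabolic framework, the role of the elliptic eigenpair $(\lambda_\mu,\varphi)$ being played by the periodic parabolic eigenpair supplied by (\ref{11}). First I would fix, via (\ref{11}) and (\ref{10.1})/Nadin's theorem, the eigenpair $(\lambda,\varphi)$ of $\partial_t-\Delta_y-\gamma(t,y)$ on $\R\times\R^{N-1}$, with $\lambda>0$, $\varphi>0$, $\varphi$ periodic in $y$ and $T$‑periodic in $t$; by periodicity $\inf\varphi>0$, $\varphi$ is bounded, and $\nabla_y\varphi/\varphi$ is bounded. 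Since $f(t,x,0)=0$ and $s\mapsto f(t,x,s)/s$ is nonincreasing by (\ref{10}), the bounded solution $U\ge0$ satisfies $U_t-\Delta U-c\partial_1 U\le f_s(t,x,0)\,U$. Given $\delta\in(0,\lambda)$, condition (\ref{11}) together with a compactness argument over the period cell in $y$ (as in Proposition \ref{lem0}) yields $R=R(\delta)>0$ with $f_s(t,x_1,y,0)\le\gamma(t,y)+\delta$ for $|x_1|\ge R$; hence, setting $\mathcal L_\delta w:=w_t-\Delta w-c\partial_1 w-(\gamma(t,y)+\delta)w$, we get $\mathcal L_\delta U\le0$ in $\{|x_1|>R\}$.

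Next I would build the comparison function. Put $\alpha=\alpha(\delta):=\tfrac{-c+\sqrt{c^2+4(\lambda-\delta)}}{2}>0$, so that $\alpha^2+c\alpha=\lambda-\delta$, and for $p>0$ set $w_p(t,x_1,y)=C_1e^{\alpha|x_1|}\varphi(t,y)+C_2e^{-\alpha|x_1|}\varphi(t,y)$ on the two slabs $R<|x_1|<R+p$. Using $\varphi_t-\Delta_y\varphi-\gamma\varphi=\lambda\varphi$ and that $\varphi$ is independent of $x_1$, a direct computation gives $\mathcal L_\delta w_p/\varphi=C_1\big((\lambda-\delta)-\alpha^2-c\alpha\tfrac{x_1}{|x_1|}\big)e^{\alpha|x_1|}+C_2\big((\lambda-\delta)-\alpha^2+c\alpha\tfrac{x_1}{|x_1|}\big)e^{-\alpha|x_1|}\ge0$ on $\{|x_1|>R\}$ (on each half‑line one bracket vanishes and the other equals $2c\alpha>0$). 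Choosing $C_1=\|U\|_\infty e^{-\alpha(R+p)}/\inf\varphi$ and $C_2=\|U\|_\infty e^{\alpha R}/\inf\varphi$ makes $w_p\ge U$ on the slices $|x_1|=R$ and $|x_1|=R+p$, for all $t,y$. Then $z:=(w_p-U)/\varphi$ is bounded from below, is $\ge0$ on those two slices, and satisfies $z_t-\Delta z-\tfrac{2\nabla_y\varphi}{\varphi}\!\cdot\!\nabla_y z-c\partial_1 z+(\lambda-\delta)z\ge0$ in each slab. Applying the maximum principle (see below) gives $z\ge0$, i.e.\ $U\le w_p$ in $\{R<|x_1|<R+p\}$; letting $p\to\infty$ kills the growing term ($C_1\to0$) and leaves $U(t,x_1,y)\le C_2 e^{-\alpha|x_1|}\varphi(t,y)$ for $|x_1|>R$. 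Since $\varphi$ is bounded and $U$ is bounded on $\{|x_1|\le R\}$, this gives $U(t,x_1,y)\le k\,e^{-\alpha|x_1|}$ on $\R\times\R^N$; finally sending $\delta\to0$ one may take any $\varepsilon<\tfrac{-c+\sqrt{c^2+4\lambda}}{2}$, with $k$ adjusted.

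The main obstacle I expect is the maximum‑principle step: the cylinder $\R_t\times\{R<|x_1|<R+p\}\times\R^{N-1}$ is unbounded in $y$ and, crucially, carries no initial slice — $w_p\ge U$ is known only on the lateral boundary $\{|x_1|\in\{R,R+p\}\}$, for all $t\in\R$ — so the ordinary parabolic comparison principle does not apply. The point is that conjugation by $\varphi$ has turned the zeroth‑order term into the strictly positive constant $\lambda-\delta$; for a bounded $z$ with $z_t-\mathcal Ez+(\lambda-\delta)z\ge0$ ($\mathcal E$ uniformly elliptic with bounded coefficients) and $z\ge0$ on the lateral boundary for every $t$, a Phragmén–Lindelöf‑type argument forces $z\ge0$ — for instance, tracking $m(t):=\inf_x z(t,\cdot)$, on any interval where $m<0$ one obtains $\tfrac{d}{dt}\big(e^{(\lambda-\delta)t}m(t)\big)\ge0$, which, with $m$ bounded below and $m\ge0$ on the lateral boundary, is incompatible with $m$ ever being negative. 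This is the parabolic counterpart of the sign‑of‑potential comparisons already used in the paper (cf.\ the proofs of Lemma \ref{lem:l3} and Theorem \ref{thm:T5}); the auxiliary facts (uniform $R(\delta)$, boundedness of $\nabla_y\varphi/\varphi$) are immediate from the $(t,y)$‑periodicity of $\varphi$.
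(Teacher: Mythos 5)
Your proposal is correct in substance and reaches the stated conclusion, but it implements the barrier argument differently from the paper. The paper first performs the Liouville transformation $V=Ue^{\frac{c}{2}x_1}$, builds a one--sided barrier $\theta_a(x_1)\varphi(t,y)e^{(\tau-t)\delta}$ with $\theta_a$ an explicit ODE solution on $[R,R+a]$, and --- this is the key device --- the extra factor $e^{(\tau-t)\delta}$ makes the supersolution blow up as $t\to-\infty$, so that for each $a$ one can find an initial time $t_0(a)\ll\tau$ at which the comparison function dominates $V$; the ordinary parabolic weak maximum principle on $(t_0(a),\tau)$ then applies, and one concludes at the arbitrary time $\tau$. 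You instead work directly with $U$, use a symmetric two--sided barrier $C_1e^{\alpha|x_1|}\varphi+C_2e^{-\alpha|x_1|}\varphi$ exactly as in Proposition \ref{lem0}, and replace the missing initial slice by a Phragm\'en--Lindel\"of principle for ancient bounded supersolutions of an operator with strictly positive zeroth--order coefficient. Your computations check out (on each half-line one bracket vanishes and the other equals $2c\alpha\ge0$, and your choice of $C_1,C_2$ does control $U$ on the two lateral slices), and your rate $\alpha\to\frac{-c+\sqrt{c^2+4\lambda}}{2}$ is the weaker of the paper's two one--sided rates, which suffices for the statement. The one step you should tighten is the $m(t)=\inf_x z(t,\cdot)$ differential inequality: the infimum over the $y$--unbounded slab need not be attained and $m$ need not be differentiable. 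The clean fix is to compare $z$ on $[t_0,\tau]$ with the spatially constant exact solution $\psi(t)=-Me^{-(\lambda-\delta)(t-t_0)}$, $M=\sup|z|$, which satisfies $\psi_t+(\lambda-\delta)\psi=0$, lies below $z$ at $t=t_0$ and on the lateral boundary; the bounded-solution (Phragm\'en--Lindel\"of) comparison principle gives $z(\tau,\cdot)\ge-Me^{-(\lambda-\delta)(\tau-t_0)}$, and letting $t_0\to-\infty$ yields $z(\tau,\cdot)\ge0$. This rigorous version is mathematically equivalent to the paper's $e^{(\tau-t)\delta}$ trick, so the two proofs exploit the same mechanism ($\lambda>0$ plus boundedness), yours packaging it as a maximum principle for entire-in-time solutions and the paper's as a judicious choice of supersolution.
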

\begin{proof}
We only need to prove that the statement holds for $x_1\geq0$ and by analogy we also derive the result for $x_1\leq0$. Using the transformation $V(t,x_1,y)=U(t,x_1,y)e^{\frac{c}{2}x_1}$, we see that $V(t,x)$ is  periodic in $y$, T-periodic in $t$ and satisfies the following equation
\begin{equation}
\left\{\begin{array}{ll}
  V_t=\Delta V+f(t,x,Ve^{-\frac{c}{2}x_1})e^{\frac{c}{2}x_1}-\dfrac{c^2}{4}V &\textrm{$t\in\R,x\in\R^N$}\\
 \textrm{$Ve^{-\frac{c}{2}x_1}$ is bounded.}
\end{array} \right.\nonumber
\end{equation}
For any $R>0$, $\delta>0$, we denote $\mathcal{Q}_R=\R\times[0,R]\times\R^{N-1}$ and set $\mathcal{L}_\delta=\partial_t-\Delta-\gamma(t,y)-\delta+\dfrac{c^2}{4}$. By condition (\ref{11}), there exists $R=R(\delta)>0$ such that $\mathcal{L}_\delta V\leq 0$,  $\forall (t,x)\in\mathcal{Q}_R$. Moreover, there exists a unique  eigenpair $(\lambda,\varphi)$ satisfying (\ref{T4.0}). Fix $\tau\in\R$ and define the function $\upsilon(t,x)=\theta_{a}(x_1)\varphi(t,y)e^{(\tau-t)\delta}$, where $\theta_{a}: [R,R+a]\to\R$ is the solution of  
\begin{equation}
\left\{\begin{array}{ll}
 \theta''_{a}=(\kappa+\delta)\theta_a &\textrm{in $(R,R+a)$}\\
 \theta_{a}(R)=Ce^{\sqrt{\kappa}R}\\
 \theta_{a}(R+a)=Ce^{\sqrt{\kappa}(R+a)},\nonumber
\end{array} \right.
\end{equation}
where $C=\sup_{\R\times\R^{N}}V(t,x)e^{-\frac{c}{2}x_1}/\inf_{\R\times\R^{N-1}}\varphi(t,y)$, $\kappa>0$ would be chosen later. Note that $C\in(0,\infty)$ since $\inf_{\R\times\R^{N-1}}\varphi(t,y)>0$ due to the periodicity of $\varphi$ in $y$ and $t$. Direct calculation yields
\begin{eqnarray}
\theta_a(\rho)=C(e^{(\sqrt{\kappa}+\sqrt{\kappa+\delta})R})\left(1-\dfrac{e^{\sqrt{\kappa}a}-e^{-\sqrt{\kappa+\delta}a}}{e^{\sqrt{\kappa+\delta}a}-e^{-\sqrt{\kappa+\delta}a}}\right)e^{-\sqrt{\kappa+\delta}\rho}+\nonumber\\
+C(e^{(\sqrt{\kappa}-\sqrt{\kappa+\delta})R})\dfrac{-e^{-\sqrt{\kappa+\delta}a}+e^{\sqrt{\kappa}a}}{e^{\sqrt{\kappa+\delta}a}-e^{-\sqrt{\kappa+\delta}a}}e^{\sqrt{\kappa+\delta}\rho}.\nonumber
\end{eqnarray}
Choosing $0<\delta<\lambda/3$ and $\kappa\in\left({c^2}/{4},\lambda+{c^2}/{4}-3\delta\right)$, we have $\mathcal{L}_\delta\upsilon=\left(-\kappa-3\delta+\lambda+{c^2}/{4}\right)\upsilon\geq0$. Moreover, the way of choosing $C$ yields $\upsilon(t,x_1,y)\geq V(t,x_1,y)$ for $t\leq\tau,x_1\in\{R,R+a\}$. Let $W(t,x)=\upsilon(t,x)-V(t,x)=z(t,x)\varphi(t,y)$, one sees that $z(t,x)\geq0$ for  $t\leq\tau$, $x_1\in\{R,R+a\},y\in\R^{N-1}$. Since $\mathop{\sup}\limits_{t\in\R,x\in(R,R+a)\times\R^{N-1}}V(t,x)<+\infty$ and $\mathop{\inf}\limits_{t\leq\tau,x\in(R,R+a)\times\R^{N-1}}\upsilon(t,x)>0$,  there exists $t_0(a)\ll\tau$, which may depend on $a$ and sufficiently close to $-\infty$ such that $z(t_0(a),x)\geq0$ for $x\in(R,R+a)\times\R^{N-1}$. In addition that, we have
$$0\leq\dfrac{\mathcal{L}_\delta W}{\varphi}=\partial_tz-\Delta z-2\nabla z.\dfrac{\nabla\varphi}{\varphi}+\left(\lambda-\delta+\dfrac{c^2}{4}\right)z.$$
Since the zero order coefficient of parabolic operator with respect to $z$ is positive, we deduce from the parabolic weak maximum principle that $z(t,x)\geq0$ in $(t_0(a),\tau)\times\mathcal{Q}_{R+a}\setminus\mathcal{Q}_R$ for every $a>0$. Finally, the classical parabolic regularity implies that $V(\tau,x)\leq\upsilon(\tau,x)$ for $x\in\mathcal{Q}_{R+a}\setminus\mathcal{Q}_R$. Therefore,
$$U(\tau,x)\leq\lim_{a\to+\infty} \theta_{a}(x_1)\varphi(\tau,y)e^{-\frac{c}{2}x_1}= C(e^{(\sqrt{\kappa}+\sqrt{\kappa+\delta})R})e^{-(\sqrt{\kappa+\delta}+\frac{c}{2}) x_1}\quad\quad\textrm{$(C=\max_{\R\times\R^{N-1}}\varphi(t,y))$}.$$
The arbitrariness of $\tau$ enables us to conclude the proof.
\end{proof}
\begin{remark}In the proof of this theorem, we need not assume that the solution $U$ is  periodic in $y$ and nor in $t$, but the local regularity of solutions plays an important role. On the other hand, as seen from above, it is possible to choose $\kappa=\lambda+c^2/4-3\delta$ to obtain
$$U(\tau,x)\leq C_1e^{-(\sqrt{\lambda+\frac{c^2}{4}-2\delta}+\frac{c}{2}) x_1}\quad\textrm{for $\tau\in\R,x_1\geq R,y\in\R^{N-1}$}$$
Using the same arguments, we derive that there exists $C_2>0$ :
$$U(\tau,x)\leq C_2e^{(\sqrt{\lambda+\frac{c^2}{4}-2\delta}-\frac{c}{2}) x_1}\quad\textrm{for $\tau\in\R,x_1\leq-R,y\in\R^{N-1}$}.$$
Since $U$ is bounded, one can choose $C_1,C_2$ large enough such that these  inequalities hold in $\R^{N+1}$.
\end{remark}

\noindent{\bf Acknowledgements.} The research presented in this paper is a part of the PhD work. The author expresses his gratitude to professor Henri Berestycki for suggesting the problem and useful advices. He is also thankful to Luca Rossi for many interesting discussions. This work is supported by FIRST program (ITN-FP7/2007-2013), grant agreement 238702. He also thanks Technion-Israel Institute of Technology and Technische Universiteit Eindhoven for their encouragements, friendly and stimulating atmosphere when he visited. During the last year of the thesis, the author is supported by  ERC Grant
(FP7/2007-2013), grant agreement  321186: "Reaction-Diffusion Equations, Propagation and Modelling" held by Henri Berestycki. Lastly, he thanks the anonymous referees for the  helpful comments, which improve the paper.


\def\cprime{$'$} \def\polhk#1{\setbox0=\hbox{#1}{\ooalign{\hidewidth
  \lower1.5ex\hbox{`}\hidewidth\crcr\unhbox0}}}
  \def\cfac#1{\ifmmode\setbox7\hbox{$\accent"5E#1$}\else
  \setbox7\hbox{\accent"5E#1}\penalty 10000\relax\fi\raise 1\ht7
  \hbox{\lower1.15ex\hbox to 1\wd7{\hss\accent"13\hss}}\penalty 10000
  \hskip-1\wd7\penalty 10000\box7}

\addcontentsline{toc}{section}{References}

\end{document}